\documentclass[12pt, a4paper]{amsart}
\usepackage{amsmath,amssymb,amscd,amsfonts}

\usepackage{ifthen}
\usepackage[T1]{fontenc}
\usepackage[utf8]{inputenc}
\usepackage[all]{xy}
\usepackage{graphicx}
\usepackage{enumerate}
\usepackage{xspace}
\usepackage{pdfsync}
\usepackage{epic}
\usepackage{dsfont}

\newtheorem{theorem}{Theorem}
\newtheorem{remark}[theorem]{Remark}
\newtheorem{definition}[theorem]{Definition}
\newtheorem{lemma}[theorem]{Lemma}

\newtheorem{proposition}[theorem]{Proposition}
\newtheorem{conjecture}[theorem]{Conjecture}
\newtheorem{corollary}[theorem]{Corollary}
\newtheorem{example}[theorem]{Example}
\newtheorem{question}[theorem]{Question}
\begin{document}
\title[]{Bogomolov decomposition and compact K\"ahler manifolds of algebraic dimension zero.}



\author{Fr\'ed\'eric Campana}
\address{Universit\'e Lorraine \\
 Institut Elie Cartan\\
Nancy, France}

\email{frederic.campana@univ-lorraine.fr}

\date{\today}
\maketitle

D\'edi\'e au Professeur F.A. Bogomolov, pour ses travaux fondamentaux.

\tableofcontents

\section{Some notation and terminology.}

$X$ is an $n$-dimensional compact connected complex manifold (sometimes just normal if explicitly said). A subvariety is an irreducible compact analytic subset $Z\subset X$. A subvariety is `trivial' if either a point or $X$, nontrivial otherwise. The `general' (resp. `generic') point of $X$ is a point lying outside countably (resp. finitely) many strict subvarieties of $X$ depending on the situation considered. 

A fibration $f:X\to Y$ is a surjective holomorphic map (regular model of a meromorphic fibration) with connected fibres. Its general fibre $X_y$ is the one lying over a general point $y$ of $Y$. Since we are considering bimeromorphic properties, the maps are meromorphic ones, their uniqueness is meant up to bimeromorphic equivalence. 

If bimeromorphic invariants such as: $q$ (the usual irregularity), $k$ (the Kummer dimension. See \S\ref{s k}), $a$ (the algebraic dimension, see next section), $s$ (the semi-simple dimension, see \S\ref{s ss}) of complex spaces in $C$ are defined, we denote with $q(f):=q(X/Y),k(f):=k(X/Y), a(f):=a(X/Y), s(f):=s(X/Y)$ the (well-defined) value of the invariant on the general fibre $X_y$ of a fibration $f:X\to Y$. This is, in general, {\bf not} the difference of the values of the invariant on $X$ and $Y$.

We denote by $BC(X)$ the Barlet-Chow cycle space of $X$, by $BC(X/Y)$ its Zariski-closed subset consisting of cycles contained in some fibre of $f:X\to Y$, equipped with its fibre map: $f_*:BC(X/Y)\to Y$. We refer to \cite{BM} for the construction and properties with proofs, of this space. The compactness of its components when $X$ is K\"ahler (resp. in the class $C$) is central in the present text.





\section{Introduction: Bimeromorphic classification of compact K\"ahler manifolds.}

Let $X_n$ be a compact complex manifold of dimension $n$. Let $0\leq a(X)\leq n$ be its `algebraic dimension', that is the transcendence degree over $\Bbb C$ of its field of meromorphic functions $M(X)$. If $a(X)=n$, $X$ is `Moishezon', that is: bimeromorphic to a projective manifold. At the other extreme, $a(X)=0$, and the number of irreducible divisors of $X$ is finite (\cite{K}). In the intermediate case, there is a unique (up to bimeromorphic equivalence) fibration $a_X:X\to A_X$, called the `algebraic reduction' of $X$, which induces a field isomorphism $a_X:M(A_X)\to M(X)$, and so $dim(A_X)=a(X)$. 

In the non-K\"ahler case, from dimension $3$ on, no classification scheme emerges, as shown by the extreme flexibility of twistor spaces of anti-self-dual riemannian fourfolds. One main reason being that covering analytic families of cycles of codimension $2$ or more do not produce meaningful fibrations, these families building non-compact components of the Barlet-Chow space $BC(X)$.

When $X$ is K\"ahler, or in the class C (i.e: bimeromorphic to K\"ahler\footnote{This class being better adapted to bimeromorphic geometry. It appeared for the first time in \cite{U}, Corollary 9.3, as the class of manifolds dominated by a compact K\"ahler one. The domination was proved to be chosen bimeromorphic in \cite{Va}.}), the situation is opposite: the space 
$BC(X)$ has compact components\footnote{As first observed and used by D. Lieberman (\cite{L}, theorem 1.1), with details and an extension to the Douady space by A. Fujiki in \cite{F78}.}, and covering families of cycles produce  fibrations. In particular,  algebraic reductions (\cite{Ca81}, \cite{F81}), as well as Albanese maps (\cite{Ca85}) can be constructed in the relative setting of fibrations.

$\bullet$ These two tools permit to essentially reduce the structure of any $X$ in C to the special case where $a(X)=0$, see \S\ref{s iM} for some details.

$\bullet$ The remaining task is then to describe the compact K\"ahler manifolds of algebraic dimension zero. We will give a quite short, but conditional answer (in Theorem \ref{t' a=0} below) to this question, in terms of `Kummer' and `simple non-Kummer' manifolds (introduced in \cite{F82}, see Definition \ref{d sk} below), the latter ones being conjecturally `bimeromorphically symplectic'. 

In short: assuming conjecture \ref{c simple} below, all compact K\"ahler manifolds are constructed, by means of a sequence of $4$ canonical fibrations from familiar ones: projective, tori and hyperk\"ahler. See Theorem \ref{t' a=0}, proved in \S\ref{s a=0}, for the case $a(X)=0$, and \S\ref{s gc} for the general case. This conjecture and decomposition were conceived in the mid 1980's, inspired by \cite{Bo}, \cite{Bo'}, \cite{Bo"}, and by \cite{F83} where a partial picture based on the Kummer reduction was presented. The semi-simple reduction was introduced in \cite{F82}, but not used later. We explain here how to-conditionally-obtain a complete picture of the bimeromorphic category of compact K\"ahler manifolds. Notice however that this description is for the individual $X$'s, and is extremely far from being preserved by deformations of the complex structure.

\begin{definition} \label{d sk}

\

1. $X_n$ is `simple' (\cite{F82}) if $n\geq 2$ and if $X$ is covered only by its trivial subvarieties. Alternatively, it has at most countably (finitely? see Question \ref{q max}) many maximal nontrivial subvarieties. If $X$ is simple, $a(X)=0$.

 $X_n, n\geq 2$ is `strictly simple' if its only subvarieties are trivial, i.e: either $X$ or its points.

2. $X$ and $X'\in C$ are `isogeneous' if $a(X)=a(X')=0$ and if there exists $X"\subset X\times X'$ irreducible generically finite onto $X$ and $X'$. Isogeny is an equivalence relation. The isogeny class of $X$ consists of all $X'\in C$ isogeneous to $X$.

3. $X$ is `semi-simple' if isogeneous to a product of simple manifolds, or, equivalently if $X=S/G$, where $S$ is a product of simple manifolds and $G$ a finite group of automorphisms of $S$.

$X$ is `isotypically semi-simple' if isogeneous to a product $S^k$ with $S$ simple. The isogeny types of $S$ and $X$ are then uniquely defined.

5. For tori, there are also classical definitions, which we call t-isogeny and t-simplicity, for which $T,T'$ are t-isogeneous (resp. $T$ is t-simple) if there is a common torus \'etale cover of both $T,T'$ (resp. if there is no nontrivial subtorus of $T$). 

It is easy to see that for tori $T,T'$ with $a(T)=a(T')=0$ isogeny and t-isogeny coincide, and that a torus $T$ is simple if and only if strictly simple, if and only if t-simple and not projective.

6. $X$ is Kummer if bimeromorphic to the quotient $T/G$ of a complex compact torus $T$ by the action of a finite group $G$ (\cite{U}, 16.1). If $a(X)=0$, this is equivalent to being isogeneous to a product of tori $T_j$ with $a(T_j)=0$.  A torus $T$ with $a(T)=0$ is in general not semi-simple, Poincar\'e reducibility being specific to Abelian varieties.

A semi-simple manifold $X$ is said to be `without Kummer factor' if no member of its isogeny class has a  Kummer (or Torus) quotient. 

7. $X$ is `bimeromorphically symplectic' if $n=2m$ is even, and if it has a holomorphic $2$-form $\sigma$ such that $\sigma^{\wedge m}\neq 0$. It is `irreducible' if, moreover, $h^{2,0}(X)=1$.

It were interesting to know whether or not an irreducible bimeromorphically symplectic manifold can be Kummer. See question \ref{q k}.

\end{definition}

\begin{example}

\

 1. The `simple' surfaces are bimeromorphic to either $K3$ or tori.

2. The strictly simple threefolds are \'etale quotients of simple tori (\cite{CDV}). The simple threefolds are Kummer (\cite{CHP}, \cite{GP} Appendix). 

3. A simple torus is strictly simple. The general torus of dimension $n\geq 2$ is simple (\cite{W}, Chap. VI, or \cite{BL},Chap. 1.7,1.10 ), and so are its Kummer quotients.

4. An irreducible  symplectic manifold $X$ with $a(X)=0$ is isotypically semi-simple (\cite{COP}, corollary 2.5). If $X$ is, moreover, simply connected without effective divisor, $X$ is simple (\cite{COP}, Corollary 2.6).  In particular, the general member of the Kuranishi family of Hyperk\"ahler manifolds is simple. The special members $K3^{[k]}$ and $K^{[k]}$ of \cite{B} are isotypically semi-simple.

5. The general member of the Kuranishi family of the Hilbert symmetric products $K3^{[k]}$ of a $K3$ surface is strictly simple (\cite{V}). 
\end{example}

\begin{conjecture}\label{c simple} 1. Let $X_n$ be a `simple' compact K\"ahler manifold . Then $X$ is either bimeromorphically irreducible symplectic, or Kummer. In particular, $X_n$ is Kummer if $n$ is odd.

2. Let $f:X\to Y$ be a fibration, $X\in C, a(Y)=0$. If a smooth fibre of $f$ is simple, then $Var(f)=0$, i.e: the general fibres of $f$ are bimeromorphic. 
\end{conjecture}

The first part of this conjecture thus holds when $n=2,3$, and would follow for any $n$ from a K\"ahler version of the Minimal Model Program, and the Bogomolov-Beauville decomposition (\cite{Bo}, \cite{B}) for compact K\"ahler terminal varieties with trivial canonical class (\cite{BGL}). 

\begin{remark}\label{r str si} Conjecture \ref{c simple}.1 implies that a strictly simple manifold is either an \'etale quotient of a torus, or hyperk\"ahler, and reduces in this case to showing that $\kappa\neq -\infty$ by the Bogomolov-Beauville decomposition.
\end{remark}

We establish Conjecture \ref{c simple}.1 for strictly simple fourfolds in \S\ref{ss 4f}, after some general remarks on the strictly simple case in \S\ref{ss}.

\begin{theorem}\label{t ss 4f} Let $X_4$ be a compact strictly simple K\"ahler manifold of dimension $4$. Then $X$ is either an \'etale quotient of a simple torus, or irreducible hyperk\"ahler.
\end{theorem}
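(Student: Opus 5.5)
By Remark \ref{r str si}, the task splits into two steps. First show that $\kappa(X)\neq-\infty$, and in fact $\kappa(X)=0$. Then apply the Bogomolov--Beauville decomposition to a suitable model to conclude that $X$ is an \'etale quotient of a torus or irreducible hyperk\"ahler.

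\textbf{Structure of $X$.} Since $X$ is strictly simple, it contains no curves and no divisors. Hence $X$ is a minimal model in the strongest sense. Every bimeromorphic map from $X$ to a Kähler manifold $X'$ with no exceptional loci is an isomorphism. Moreover $K_X$ is nef: a negative extremal ray would have to be spanned by a rational curve, and there are no curves. The nefness can also be seen directly, since no curve can be $K_X$-negative.

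\textbf{Main obstacle: $\kappa(X)\geq 0$.} Suppose first that $h^0(X,\Omega^p_X)\neq 0$ for some $p$. The case $p=1$, i.e.\ $q(X)>0$, is easy. The Albanese map must be generically finite, since its fibres would otherwise be nontrivial subvarieties covering $X$. Its image is therefore $4$-dimensional, and a generically finite map without exceptional subvarieties from a manifold containing no subvarieties is finite onto its image. The image is then a $4$-dimensional torus, so $q=4$. The map is finite, and unramified because its branch divisor would be a divisor in $X$; so $X$ is an \'etale cover of a torus, hence itself a torus. That torus is simple, being strictly simple.

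If $q=0$, I would use $\chi(\mathcal O_X)$ together with Riemann--Roch-type arguments for the forms. The case $h^{2,0}>0$ is the central one. Take $\sigma$ a nonzero holomorphic $2$-form. The degeneracy locus of $\sigma\wedge\sigma\in H^0(K_X)$ is either empty or all of $X$, because it is a divisor if nonempty and proper. If it is empty, $K_X$ is trivial. Otherwise $\sigma$ has constant rank $2$ generically, and its kernel is a rank $2$ foliation whose closed leaves would give subvarieties. One then has to show that a foliation on a strictly simple fourfold with trivial ... is impossible, or derive $K_X$ trivial via a Bochner-type argument.

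If all $h^{p,0}$ vanish for $p=1,2,3$, then $\chi(\mathcal O_X)=1+h^{4,0}$. Here I would use that $X$ is not uniruled (it has no curves), so by Brunella's criterion / Höring--Peternell in the Kähler setting, $K_X$ is pseudoeffective. I would then try to show $c_1(X)=0$ using nefness and the absence of divisors. Since $K_X$ nef and $X$ has no divisors, the numerical class of $K_X$ can only be detected by $K_X^4$ and the $c_2$ pairings. If $K_X^4>0$, then $X$ would be Moishezon, contradicting $a(X)=0$. Using Miyaoka--Yau-type inequalities and Hodge index, one aims to get $c_1(X)=0$ in $H^2(X,\mathbb R)$.

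\textbf{Conclusion.} Once $c_1(X)=0$ is known, the Bogomolov--Beauville decomposition applies directly to the smooth Kähler $X$. A finite étale cover is a product of a torus, Calabi--Yau and hyperk\"ahler factors. Strict simplicity forbids nontrivial products, since a product contains the fibres of its projections. It also excludes projective Calabi--Yau factors, which would force $a(X)>0$. Hence the cover is a simple torus or an irreducible hyperk\"ahler fourfold. In the hyperk\"ahler case, the étale quotient of a simply connected irreducible hyperk\"ahler fourfold with $\chi(\mathcal O)=3$ by a group of order $d$ has $\chi=3/d$. Since $\chi$ must be an integer, the quotient is trivial or of order $3$. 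Order $3$ would give $\chi(\mathcal O_X)=1$, so $h^{2,0}(X)=0$ and $X$ would be projective, which is impossible. Hence $X$ itself is irreducible hyperk\"ahler. The step I expect to be hardest is establishing $c_1(X)=0$ (equivalently $\kappa=0$) when $q=0$; I would first attempt it through the pseudoeffectivity of $K_X$ combined with the absence of divisors.
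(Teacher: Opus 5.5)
\textbf{There is a genuine gap: the proposal never proves $K_X$ torsion in the one non-trivial case, and that case is the whole theorem.}

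Your easy cases are sound:
\begin{itemize}
\item For $q>0$ the Albanese map is finite and, by purity, unramified onto $Alb(X)$. The surjectivity needs Ueno's structure theorem together with $a(X)=0$, which you skip.
\item If $\sigma\wedge\sigma\not\equiv 0$, it is a nowhere vanishing section of $K_X$.
\item Once $c_1(X)=0$ is known, your Bogomolov--Beauville argument is correct, including the $\chi(\mathcal O_X)=3/d$ trick that rules out free quotients of a hyperk\"ahler fourfold.
\end{itemize}

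The remaining case is $q=0$, $h^{4,0}=0$, $\sigma$ of generic rank $2$, and there your proposal breaks off mid-sentence. Neither suggestion works as stated. Rank-$2$ foliations do exist on strictly simple fourfolds (kernels of constant rank-$2$ forms on simple tori), so the foliation cannot be ruled out; only its determinant can be exploited. No Bochner argument is available before $c_1(X)=0$ is known. The fact $K_X^4=0$ (from $a(X)=0$) is far from $\nu(X,K_X)=0$, and ``Miyaoka--Yau plus Hodge index'' names no inequality that applies here. Your case ``$h^{p,0}=0$ for $p=1,2,3$'' is vacuous anyway: $h^{2,0}=0$ makes $X$ projective by Kodaira's criterion.

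Two ingredients are missing.

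\emph{The dichotomy of Theorem \ref{strict simple}.} $K_X$ is pseudoeffective because $X$ is not uniruled; in dimension $4$ this is \cite{O}, since Brunella and H\"oring--Peternell only give threefolds. It is then nef with trivial multiplier ideals: Siu's decomposition puts the Lelong numbers on points, where they vanish by \cite{D} because $K_X$ is not big. Your argument that ``no curve is $K_X$-negative'' gives nothing on a non-projective K\"ahler manifold. For instance, a class of negative square on a $2$-torus without curves is not nef. Takegoshi's hard Lefschetz theorem then yields that either $K_X$ is torsion, or $\chi(X,mK_X)=0$ for all $m$, so in particular $\chi(\mathcal O_X)=0$. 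Without this you have no value of $\chi(\mathcal O_X)$ to feed into Riemann--Roch.

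\emph{The foliation argument.} With $\chi(\mathcal O_X)=0$ and $q=h^{4,0}=0$, one gets $h^{3,0}=1+h^{2,0}\geq 2$. This gives two distinct rank-one foliations $G_1,G_2$ with $\det G_i=-K_X$. They are played against the kernel $F$ of $\sigma$, which has $\det F=-K_X$. All these determinant identities are exact because $X$ has no divisors.
\begin{itemize}
\item The rank of $F\cap(G_1+G_2)$ gives either $2K_X=0$, or $K_X=0$, or an extension $0\to -K_X\to F\to\mathcal O_X\to 0$.
\item The extension is non-split: a splitting would give a holomorphic vector field, which is excluded by Lieberman's theorem since $q=0$ and $X$ is not uniruled.
\item Hence $F$ is stable. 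L\"ubke's inequality combined with $K_X^2\cdot w^2\geq 0$ (\cite{HPR}) forces $K_X^2\equiv 0$.
\item So $End(F)$ is polystable with $c_1=c_2=0$, hence flat unitary. Finite monodromy contradicts $h^0(End(F))\leq 1$. Infinite monodromy is virtually abelian by \cite{CCE}, giving $q>0$ on an \'etale cover, which is then a torus.
\end{itemize}
None of this appears in your proposal. The step you flag as ``hardest'' is therefore not a detail; as written you prove the theorem only where it is immediate.
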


In \S\ref{s ss} and \S\ref{s k}, we present a short survey of the semi-simple and Kummer reductions, after \cite{F82} possibly difficult to consult.  They are used in section \S\ref{s a=0}, devoted to the conditional description of manifolds of algebraic dimension zero in C.

\begin{remark}\label{r var ss} 1. If $X_y$ is strictly simple instead of just simple, Conjecture \ref{c simple}.2 follows from Conjecture \ref{c simple}.1 since $X_y$ is then either Hyperk\"ahler or covered by a torus. In both cases, $Var(f)=0$ is known (the first case by local Torelli, the second by period maps). 

2. See Remark \ref{ex var=0} for some other known examples where $Var(f)=0$. 
\end{remark}

When $X_y$ is irreducible hyperk\"ahler non-projective, Conjecture \ref{c simple}.2 is an easy consequence of the local Torelli theorem for holomorphic $2$-forms. However, it does not seem to immediately follow from the known versions of the local Torelli theorem for `primitive symplectic varieties' of \cite{BL}, unless one had a construction of terminal minimal models in families. Alternatively, a construction in the K\"ahler context of the Viehweg-Zuo sheaves for K\"ahler fibrations could possibly solve Conjecture \ref{c simple}.2 without solving first Conjecture \ref{c simple}.1.

\begin{theorem}\label{t' a=0} Let $X$ in C such that $a(X)=0$. Assume Conjecture \ref{c simple}.2. Let $\sigma_X:X\to \Sigma$ be the MRC\footnote{It is also the `algebraic coreduction' of $X$, introduced in \cite{Ca81}.} of $X$.  Its fibres are unirational prehomogeneous, and $\Sigma=(S\times K)/G$, with $K$ Kummer, $S$ a product of non-Kummer simple manifolds, and $G$ a finite group of automorphisms. If one also assumes  Conjecture \ref{c simple}.1, $S$ is isogeneous to a product of bimeromorphically symplectic irreducible manifolds.
\end{theorem}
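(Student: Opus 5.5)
The plan is to combine the MRC fibration with the semi-simple and Kummer reductions of \S\ref{s ss} and \S\ref{s k}. I first settle the fibres of $\sigma_X:X\to\Sigma$. They are rationally connected, and $a(X)=0$; both facts feed into the analysis below. A standard observation: for a fibration $f:X\to Y$ with $X\in C$, the relative algebraic reduction shows that when $a(X)=0$ the general fibre carries no nonconstant family of meromorphic structure beyond what is forced. So I apply the relative algebraic reduction together with the Barlet-Chow compactness of $BC(X/\Sigma)$, and expect the MRC fibres to be almost homogeneous under the identity component of $Aut(X/\Sigma)$. The argument I would try: if the fibres had variation of moduli, or were not prehomogeneous, some meromorphic function would descend to $X$, contradicting $a(X)=0$. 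Then $\Sigma$ is in $C$, is not uniruled, and has $a(\Sigma)=0$.

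Next I work on $\Sigma$. I apply the semi-simple reduction $s:\Sigma\to \Sigma_s$ and the Kummer reduction from \S\ref{s ss} and \S\ref{s k}, and aim to show both are bimeromorphic, or more precisely that $\Sigma$ is itself semi-simple up to the Kummer factor. I argue by induction on dimension. Take the fibration $g:\Sigma\to Z$ given by the family of maximal nontrivial subvarieties covering $\Sigma$; this is the construction that defines simplicity, using the compactness of components of $BC(\Sigma)$. If $\Sigma$ is not simple, the general fibre $\Sigma_z$ is simple or can be refined to be so, and $a(Z)=0$. Conjecture \ref{c simple}.2 then gives $Var(g)=0$. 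So $g$ is isotrivial with simple fibre $F$, and $\Sigma$ is bimeromorphic to $(F\times \tilde Z)/H$ after a finite base change $\tilde Z\to Z$ trivializing the monodromy. Here the monodromy is finite, because $Bim(F)$ acts with finite image on the relevant structure: for a simple non-Kummer $F$, the bimeromorphic automorphisms modulo a finite group are discrete, and the monodromy group must be finite because $a=0$ rules out infinite-order twisting. By induction $\tilde Z$ has the required form. Collecting the simple non-Kummer factors into $S$ and the torus and Kummer factors into $K$ gives $\Sigma=(S\times K)/G$.

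For the final clause, Conjecture \ref{c simple}.1 says each simple non-Kummer factor is bimeromorphically irreducible symplectic. Hence $S$ is isogeneous to a product of such manifolds.

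The main obstacle is the splitting step: passing from an isotrivial fibration with simple fibre to a product after a finite cover. The difficulty is controlling the monodromy in $Bim(F)$. The torus case is special, since translations give positive-dimensional automorphism groups and non-split extensions, which is exactly why a non-semi-simple Kummer factor $K$ must be allowed. I would first try to show that for a simple non-Kummer $F$ the group $Bim(F)$ is discrete with finite image on $H^{2,0}$, forcing finite monodromy. I would handle Kummer fibres separately through the relative Albanese map (\cite{Ca85}) and the Kummer reduction.
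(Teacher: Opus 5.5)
There is a genuine gap in your second step, which carries the whole content of the theorem. The fibration $g:\Sigma\to Z$ ``given by the family of maximal nontrivial subvarieties'', with simple general fibre, does not exist in general.

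By Lemma \ref{l sigma}, the members of a maximal covering family $(Z_t)_{t\in T}$ are not disjoint: the incidence map $q:Z\to\Sigma$ is only generically finite, of some degree $d$. What is simple is the \emph{parameter space} $T$, not the $Z_t$. The canonical output is the map $\Sigma\to\mathrm{Sym}^d(T)$ with semi-simple image (Lemmas \ref{l ssimple}, \ref{l sss}), i.e.\ a fibration with semi-simple \emph{base} and uncontrolled fibres.

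A concrete counterexample: let $S$ be a K3 surface without curves. Then $\Sigma=S^{[2]}$ is its own MRC base and is not simple, being covered by the surfaces $\{\xi\ni p\}$, two through a general point. Yet it admits no nontrivial fibration at all. It satisfies the theorem only through the isogeny $S^{[2]}\sim S\times S$ coming from this $\mathrm{Sym}^d$ construction, which your induction never uses.

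Passing to a finest fibration does not rescue the argument either. Its fibres may be:
\begin{enumerate}
\item isotypically semi-simple but not simple (as above);
\item Moishezon (abelian subvarieties of a torus with $a=0$);
\item simple tori sitting in non-split extensions. Here $Var=0$ holds, but no generically finite base change splits the fibration.
\end{enumerate}
Your deferred ``handle Kummer fibres separately'' is precisely the missing idea.

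The paper proceeds as follows:
\begin{enumerate}
\item Take the Kummer reduction $X\to K$ and the relative algebraic reduction $X\to A\to K$.
\item Fujiki's Proposition \ref{p ks} gives $q(A/K)=0$ and $a(X/A)=k(X/A)=0$. This guarantees that the relative semi-simple reduction $X\to S\to A$ of $X\to A$ has fibres that are semi-simple without Kummer factor. The same holds for the relative semi-simple reduction of $X\to B$, where $X\to B\to S$ is the relative algebraic reduction of $X\to S$.
\item Only then is Conjecture \ref{c simple}.2 used, through Lemma \ref{l var ss}, for \emph{semi-simple} fibres over bases with $a=0$.
\item The maximality of the semi-simple reduction (Theorem \ref{red ss}) then forces $X\to B$ to be bimeromorphic, and $S\simeq A\times F$ after a base change.
\item The MRC is recognized a posteriori as $X\to K\times F$.
\end{enumerate}

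Two further points. First, the obstacle you single out is not the real one, and your proposed remedy is false. McMullen's K3 surfaces with Siegel disks have trivial Picard group, so they are simple and non-Kummer. They carry automorphisms of infinite order acting on $H^{2,0}$ by a non-root of unity. So $\mathrm{Bim}(F)$ need not have finite image on $H^{2,0}$, and ``$a=0$'' does not by itself exclude infinite monodromy. What actually yields a trivializing base change, generically finite when $\mathrm{Aut}_0(F)=\{1\}$, is the compactness of the components of the relative Barlet space in the K\"ahler setting, as in the Remark following the definition of $Var(f)$ in \S\ref{s var}.

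Second, your first step is only heuristic as written. It becomes correct if you cite \cite{F83}, 2.5 or \cite{Ca85}, \S 10: the MRC fibres are Moishezon with $q=0$ and $a(X)=0$, hence unirational prehomogeneous. This is the result the paper applies to $X\to K\times F$.
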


Said otherwise: $\sigma:X\to \Sigma$ is the MRC of $X$, $\kappa(\Sigma)=a(\Sigma)=0$, and $\Sigma$ has a bimeromorphic Bogomolov decomposition.

\begin{example} The fibration $\sigma:X\to \Sigma$ occurs in general with $X\neq \Sigma$. For example, if $S$ is a `general' $K3$ surface with $Pic(S)=\{\mathcal{O}_S\}$, and if $TS$ is its tangent bundle, the only non trivial subvarieties of $\Bbb P(TS)$ are the fibres of  $a': \Bbb P(TS)\to S$. In particular, $a(\Bbb P(TS))=0$, and $a'=\sigma$. This remains true for any hyperk\" ahler manifold $S$ with $Pic(S)=\{\mathcal{O}_S\}$, as independently proved by Matei Toma (\cite{MP}, Appendix B). 
\end{example}





\section{Remarks on the strictly simple case.}\label{ss}

The strictly simple case is considerably easier to deal with than the general simple case for two reasons: $X$ is then a smooth minimal model (with $K_X$ nef, no singularities). 
The observations below are essentially those made in \cite{CDV}, with the replacement of \cite{Br} by \cite{O} for $n\geq 4$.

\begin{theorem}\label{strict simple} Assume $X$, compact K\"ahler smooth, $n$-dimensional, is strictly simple (i.e: it has no nontrivial subvariety). Then: 

1. $h^0(X,\Omega^p_X\otimes m.K_X)\geq  h^{n-p}(X,(m+1)K_X), \forall p\geq 0,m\geq 0$. 

Thus: $0=\chi(X,(m+1).K_X)$ if $h^0(X,\Omega^p_X\otimes m.K_X)=0$, for some $p\geq 0,m_0\geq 0$ and all $m\geq m_0$.

2. Either $K_X$ is torsion (case a) , or not, and:

 $\chi(X,m.K_X)=\chi(\mathcal{O}_X)=0, \forall m\in \Bbb Z$ (case b).
 
 In case b: $h^0(X,\Omega^p)>0$ for some odd $p>0$.
 
 In case a, $X$ is either irreducible hyperk\"ahler, or an \'etale quotient of a simple torus.
  \end{theorem}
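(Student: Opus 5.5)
The plan is to exploit the single strong consequence of strict simplicity: $X$ has no effective divisor and no nontrivial subvariety at all. In particular every nonzero holomorphic section of a line bundle $L$ is nowhere vanishing, so it trivializes $L$, and every nonzero morphism of locally free sheaves $\mathcal O_X\to E$ is injective as a bundle map. Also $a(X)=0$, and there are no rational curves, so $X$ is not uniruled and $K_X$ is pseudo-effective.

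\textbf{Part 1.} By Serre duality, $h^{n-p}(X,(m+1)K_X)=h^p(X,-mK_X)$. I would compare $h^p(X,-mK_X)$ with $h^0(X,\Omega^p_X\otimes mK_X)$ by Hodge symmetry. For $m=0$ this is exactly $h^{0,p}=h^{p,0}$. For general $m$, I would pass to the topologically trivial twist: I expect $K_X$ to be numerically trivial or at least nef with zero volume (otherwise $K_X$ is big, so $a(X)=n$, which is absurd). When $K_X$ is nef, the K\"ahler version of Hodge symmetry for flat or unitary bundles, or more generally the hard Lefschetz type theorem of \cite{O}, which replaces \cite{Br}, gives an injection $H^p(X,-mK_X)\hookrightarrow \overline{H^0(X,\Omega^p\otimes mK_X)}$ via the conjugate of a harmonic representative twisted by a singular metric with no multiplier ideal. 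Strict simplicity makes the multiplier ideals trivial, since they are cosupported on analytic subsets, which here must be finite and hence are harmless in codimension $\geq 2$. This yields the inequality. The consequence for $\chi$ follows directly. If the left side vanishes for $m\geq m_0$, then $h^{q}(X,(m+1)K_X)=0$ for all $q=n-p$. This holds for every $p$, provided the hypothesis is read as being for all $p$ with $m$ large, and then Riemann--Roch gives $\chi=0$. The main obstacle is this twisted Hodge symmetry step, and I would rely on \cite{O} exactly as \cite{CDV} used \cite{Br} in dimension 3.

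\textbf{Part 2.} Suppose first that $K_X$ is not torsion. Then $h^0(X,mK_X)=0$ for all $m\neq 0$: a nonzero section would be nowhere vanishing, forcing $mK_X$ trivial. The same holds for $-mK_X$. Since $\chi(X,mK_X)$ is a polynomial in $m$, I would show it vanishes identically. Using part 1 and Serre duality, all cohomology of $mK_X$ is controlled by $H^0(\Omega^p\otimes m'K_X)$. A nonzero section of $\Omega^p\otimes mK_X$ with $p=0$ is excluded. Moreover, pluri-forms of this type span a subsheaf whose degeneracy locus must be empty, so they would make $K_X$ (or its dual) generated in a way that forces torsion. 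Hence $\chi(mK_X)=0$ for $m\gg0$, so $\chi\equiv0$, in particular $\chi(\mathcal O_X)=0$. Since $h^{0,p}=h^{p,0}$, we have $\chi(\mathcal O_X)=\sum(-1)^p h^{p,0}=0$ with $h^{0,0}=1$, which forces some odd $p$ with $h^{p,0}>0$.

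In case a, pass to the finite \'etale cover trivializing $K_X$. This cover is still strictly simple, because the preimage of a point is finite and the images of subvarieties are subvarieties. Apply the Bogomolov--Beauville decomposition: a strictly simple manifold cannot be a nontrivial product, since the fibres of a projection would be subvarieties. Strict Calabi--Yau factors with $n\geq3$ are projective, which is excluded by $a=0$. So the cover is a torus, necessarily simple, or irreducible hyperk\"ahler, and in the latter case the cover is trivial because hyperk\"ahler manifolds are simply connected and $K_X$ is then trivial.
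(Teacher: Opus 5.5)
\textbf{Gap: the torsion criterion in Part~2.} The gap is at the only step of Part~2 with real content: showing that if $K_X$ is not torsion, then $H^0(X,\Omega^p_X\otimes mK_X)=0$ for every $p$ and all $m\geq m_0$. Your sentence about pluri-forms spanning a subsheaf ``whose degeneracy locus must be empty'' and thereby ``forcing torsion'' is not an argument. Sections for a single value of $m$ only give maps $-mK_X\to\Omega^p_X$, at best identifying $\det F$ with some multiple of $-K_X$. That says nothing about torsion; for $m=0$ such sections are exactly what case b requires.

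The missing idea is Lemma~\ref{kappa=0} (\cite{D-Pe-S}, Prop.~2.15), which compares \emph{different} values of $m$:
\begin{enumerate}
\item If the vanishing fails, then for some $p$ there are nonzero $s_j\in H^0(X,\Omega^p_X\otimes m_jK_X)$ for infinitely many distinct $m_j$.
\item By an exchange argument, pick $s_0,\dots,s_r$ whose images generate a subsheaf $F\subset\Omega^p_X$ of rank $r$, with $\{s_0,\dots,s_{r-1}\}$ and $\{s_1,\dots,s_r\}$ both generically independent.
\item Wedging gives nonzero maps $-(m_0+\dots+m_{r-1})K_X\to\det F$ and $-(m_1+\dots+m_r)K_X\to\det F$. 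These are isomorphisms because $X$ has no divisor, so $(m_r-m_0)K_X\cong\mathcal O_X$.
\end{enumerate}
This is why the dichotomy must be ``nonvanishing for infinitely many $m$'' versus ``vanishing for all $m\geq m_0$''. Only in the latter case does Part~1 give $H^q(X,mK_X)=0$ for all $q$ and $m>m_0$, hence $\chi\equiv 0$.

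Your opening claim that every nonzero $\mathcal O_X\to E$ is injective as a bundle map is also false once $\mathrm{rk}\,E\geq n$ (e.g.\ $\Omega^p_X$ with $0<p<n$), since isolated zeros are allowed by strict simplicity. The argument above only needs it for the line bundle $\det F$, where it holds.

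\textbf{Secondary points.}
\begin{itemize}
\item In Part~1 the hard Lefschetz input is Takegoshi's theorem \cite{T}: surjectivity of $w^{n-p}\wedge\bullet$ from $H^0(X,\Omega^p_X\otimes mK_X\otimes I(h_m))$ to $H^{n-p}(X,(m+1)K_X\otimes I(h_m))$. It is not \cite{O}. The role of \cite{O} (replacing \cite{Br}) is to give ``not uniruled $\Rightarrow$ $K_X$ pseudo-effective'', which you use without justification.
\item Your dichotomy ``nef with zero volume, or else big'' is unfounded, since pseudo-effective does not imply nef. The paper gets nefness and $I(h_m)=\mathcal O_X$ from Siu's decomposition and \cite{D}, Cor.~6.4.
\item Your other remark does rescue Part~1 without nefness, but not for the ``codimension $\geq 2$'' reason. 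Since $\mathcal O_X/I(h_m)$ is a skyscraper, $H^q(X,(m+1)K_X\otimes I(h_m))\to H^q(X,(m+1)K_X)$ is onto for $q\geq 1$. Also $H^0(X,\Omega^p_X\otimes mK_X\otimes I(h_m))\subset H^0(X,\Omega^p_X\otimes mK_X)$, so the inequality survives.
\item In case a, Beauville--Bogomolov applies to a further finite \'etale cover $\tilde X$, which is still strictly simple. Simple connectedness of a hyperk\"ahler $\tilde X$ does not make $\tilde X\to X$ trivial (Enriques surfaces). What works: $a(X)=0$ gives $h^{2,0}(X)\neq 0$, so the deck group $G$ fixes the symplectic form. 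With $\dim X=2k$ this gives $\chi(\mathcal O_X)=k+1=\chi(\mathcal O_{\tilde X})$, and multiplicativity of $\chi(\mathcal O)$ under \'etale covers forces $G=\{1\}$.
\end{itemize}
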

  
  \begin{corollary}\label{c chi neq 0} Let $X_n$ be a strictly simple compact K\"ahler manifold. 
  
  1. If $\chi(X,\mathcal{O}_X)\neq 0$, $X$ is irreducible Hyperk\"ahler.
 
 2. If $n=3$, $X$ is an \'etale quotient of a simple torus (\cite{CDV}).
 \end{corollary}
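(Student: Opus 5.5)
Both parts come directly from Theorem \ref{strict simple}, using its case distinction (a)/(b).

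For part 1, assume $\chi(X,\mathcal{O}_X)\neq 0$. Case (b) of Theorem \ref{strict simple}.2 forces $\chi(\mathcal{O}_X)=0$, so we are in case (a). Then $X$ is either irreducible hyperk\"ahler or an \'etale quotient $T/G$ of a simple torus $T$ with $G$ acting freely. In the second case $\chi(\mathcal{O}_X)=\chi(\mathcal{O}_T)/|G|$, since $\chi(\mathcal{O})$ is multiplicative under finite \'etale covers (Riemann--Roch, or simply because holomorphic Euler characteristics multiply by the degree). For a torus of positive dimension, $\chi(\mathcal{O}_T)=\sum_p(-1)^p\binom{n}{p}=0$. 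Hence $\chi(\mathcal{O}_X)=0$, which is a contradiction. So $X$ is irreducible hyperk\"ahler.

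For part 2, let $n=3$. An irreducible hyperk\"ahler manifold has even dimension, so if we are in case (a) then $X$ is an \'etale quotient of a simple torus and we are done. It remains to rule out case (b). There $\chi(\mathcal{O}_X)=0$ and $K_X$ is not torsion. Since $X$ has no divisors, $h^0(X,mK_X)=0$ for $m\neq 0$ unless $K_X$ is torsion; a nonzero section would cut out an effective divisor, which must be empty, so $mK_X$ would be trivial. Theorem \ref{strict simple}.2 also gives a nonzero holomorphic $p$-form with $p$ odd, so $p\in\{1,3\}$. The value $p=3$ is impossible because $h^0(K_X)=0$. So $q(X)=h^{1,0}>0$. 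Next I consider the Albanese map $\alpha:X\to \mathrm{Alb}(X)$. Its image is a positive-dimensional subvariety of a torus, and its fibres cover $X$; by strict simplicity the fibres must be points, so $\alpha$ is generically finite onto its image. The image $Z$ is then a $3$-dimensional subvariety of a torus that is not covered by translates of subtori, since those would pull back to a covering family of nontrivial subvarieties of $X$. By Ueno's theorem $Z$ is of general type. Then $\kappa(X)\geq \kappa(Z)=3$, contradicting $h^0(mK_X)=0$ for all $m>0$.

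The main obstacle is this Ueno step. A cleaner alternative, which I would try first, is to show directly that $q(X)\geq 3$: if $q\leq 2$, then $\mathrm{Alb}(X)$ has dimension $\leq 2<3$, so $\alpha$ has positive-dimensional fibres, which must then be all of $X$; that forces $\alpha$ to be constant, contradicting $q>0$. So $q\geq 3$, hence $\alpha$ is generically finite onto its image. Ramification would give a divisor (the ramification locus of a generically finite map between $3$-folds is a divisor when nonempty), and $X$ has none, so $\alpha$ is unramified. If the image is all of $\mathrm{Alb}(X)$, then $\alpha$ is a finite \'etale cover of a torus, hence $X$ is itself a torus with $K_X$ trivial, contradicting case (b). Otherwise the image is a proper subvariety, and I fall back on Ueno's theorem to get $\kappa(X)\geq 0$, hence $K_X$ torsion by the no-divisor argument, again contradicting case (b). This reproduces the argument of \cite{CDV}.
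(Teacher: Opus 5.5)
Your proof is correct. Part 1 is the paper's argument, except that you make explicit the exclusion of \'etale torus quotients, via multiplicativity of $\chi(\mathcal{O})$. In part 2 the subcase $p=3$ is handled as in the paper. For $p=1$ you take a genuinely different route.

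The paper uses $a(X)=0$ and \cite{U}, 13.7 to get that the Albanese map is surjective with connected fibres. Strict simplicity then makes it bimeromorphic, hence an isomorphism, so $X\cong \mathrm{Alb}(X)$ directly.

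You use strict simplicity first, to see that $\alpha$ has finite fibres (so $q\geq 3$), and then split into two cases. If $\alpha(X)=\mathrm{Alb}(X)$, the Jacobian divisor is empty and $\alpha$ is \'etale. Otherwise, Ueno's theorem on subvarieties of tori gives $\kappa(X)\geq 0$. This bypasses the connectedness statement in \cite{U}, 13.7, which is its deeper part: it rests on finite covers of tori with $\kappa=0$ being \'etale. The cost is a case distinction and working inside case (b) by contradiction.

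Your first variant contains a false intermediate claim. If $\alpha(X)=\mathrm{Alb}(X)$, then $\alpha(X)$ is a translate of a subtorus whose preimage is $X$ itself, which is not a nontrivial subvariety. It also has $\kappa=0$, not general type. The conclusion $\kappa(X)\geq 0$ still survives, though, and your second variant handles this case correctly.

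In fact Ueno is not needed at all. Once $\alpha$ is generically finite onto its image, $d\alpha$ is injective at a general point. So the forms $\alpha^*dz_i$ span $\Omega^1_X$ there, and some $\alpha^*(dz_i\wedge dz_j\wedge dz_k)$ is a nonzero section of $K_X$. This contradicts $h^0(X,K_X)=0$ in case (b) immediately.
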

 
 \begin{proof}(of Corollary \ref{c chi neq 0}, assuming Theorem \ref{strict simple}) Claim 1 follows immediately from \cite{B}, since we are in case a of Theorem \ref{strict simple}.
 
 Claim 2. If $p=3$ in case 2, $K_X$ is torsion since $n=3$, and the claim follows from Bogomolov decomposition (\cite{B}, \cite{Bo}). If $p=1$ in case 2, the Albanese map of $X$ is nontrivial, hence surjective and connected (by \cite{U}, 13.7). Since $X$ is simple, it must be bimeromorphic, hence isomorphic since strictly simple.
 \end{proof}

\begin{proof} (of Theorem \ref{strict simple}) By \cite{O} (see also \cite{CP} for a more direct proof), $X$ is not uniruled, hence $K_X$ is pseudoeffective. We equip $mK_X,m>0$ with a hermitian metric $h_m$ with a positive curvature in the sense of currents. Siu's decomposition theorem (\cite{S}) implies that the positive  Lelong numbers of the corresponding psh function are concentrated on points. From \cite{D}, Corollary 6.4, since $K_X$ is not big one concludes that all Lelong numbers vanish, and so that $\forall m>0, m.K_X$ is nef, and that moreover, its multiplier ideal sheaf $I(h_m)=\mathcal{O}_X$, this for every $m>0$. 

Let $w$ be a K\"ahler form on $X$. Takegoshi's hard Lefschetz theorem (\cite{T}), implies that, for each $p\geq 0$, the map:

 $L^{n-p}:=\bullet \wedge w^{n-p}:H^0(X,\Omega^p_X\otimes (mK_X))\to H^{n-p}(X,(m+1).K_X)$ is surjective.
Hence Claim 1.

\medskip

Claim 2. There  are two exclusive cases:

a. $H^0(X,\Omega^p_X\otimes (m.K_X))\neq 0$, for some $p\geq 0$, infinitely many $m>0$.

b. $H^0(X,\Omega^p_X\otimes (m.K_X))=0,\forall p\geq 0, m\geq m_0$, some $m_0>0$. 

\medskip

In the first case, we have $H^0(X,m.K_X)\neq 0$ for some $m>0$, by Lemma \ref{kappa=0} below, since $X$ being simple, we have: $a(X)=0$. Hence $m.K_X$ is trivial, and $K_X$ is torsion, since $X$ has in particular no divisor. 

\medskip

In the second case, by Takegoshi's theorem: $H^q(X,m.K_X)=0, \forall m\geq m_0$. The polynom $P(m):=\chi(X,m.K_X)$ thus vanishes for each $m\geq m_0$, and so identically. In particular $P(0)=\chi(X,\mathcal{O}_X)=0$. 

\medskip

We thus get either case a or case b.
\end{proof}

\begin{lemma}\label{kappa=0} (\cite{D-Pe-S}, Proposition 2.15) Let $X$ be a complex compact manifold, $E$ a vector bundle on $X$, and $L$ a line bundle on $X$. Assume that $H^0(X,E\otimes (m.L))\neq 0$ for infinitely many $m>0$. Then either $a(X)\geq 1$, or $\kappa(X,L)\geq 0$. 
\end{lemma}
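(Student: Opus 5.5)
The plan is to use the vector bundle $E$ to produce many sections of powers of $L$ directly, or else to extract nonconstant meromorphic functions. Set $r=\mathrm{rk}(E)$. For each $m$ in the infinite set where $H^0(X,E\otimes mL)\neq 0$, pick a nonzero section $s_m$. Its image generates a rank-one subsheaf of $E$. Taking the saturation, we get a rank-one reflexive subsheaf, hence a line bundle on the manifold $X$. Concretely, $s_m$ defines an injection $\mathcal{O}_X(-mL+D_m)\hookrightarrow E$, where $D_m\geq 0$ is the divisorial part of the zero locus. Write $M_m:=mL-D_m$, so that $\mathcal{O}(-M_m)\subset E$ is a saturated line subsheaf.

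The first case is when two of these subsheaves differ. Take $m\neq m'$ with $\mathcal{O}(-M_m)\neq\mathcal{O}(-M_{m'})$ inside $E$. The wedge of the two sections is then a nonzero section of $\Lambda^2E\otimes(m+m')L$. Repeating this, I would consider the largest $k\leq r$ for which infinitely many $m$'s give subsheaves generating a rank-$k$ subsheaf. Among those $m$, choose $k+1$ of them. Their sections $s_{m_0},\dots,s_{m_k}$ satisfy a linear relation over meromorphic functions, and the relation is unique up to scaling because the first $k$ are independent. The coefficients are meromorphic sections of suitable multiples of $L$, and their ratios are meromorphic functions on $X$. Concretely, for $k=1$ we have $s_m\wedge s_{m'}=0$, so $s_m=\phi\cdot s_{m'}$ with $\phi$ a meromorphic section of $(m-m')L$.

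The alternative arises when all these ratios are constant, or more precisely when the data defines no nonconstant meromorphic function. In the $k=1$ case this means that for all large pairs $m>m'$ the meromorphic section $\phi$ is in fact holomorphic or has a controlled pole. I would argue as follows. Since $s_m$ and $s_{m'}$ both lie in the same saturated line subsheaf $\mathcal{O}(-N)\subset E$, we can write $s_m=t_m\cdot\iota$ with $t_m\in H^0(X,mL-N)$, where $N$ is independent of $m$. Hence $H^0(X,mL-N)\neq 0$ for infinitely many $m$. Then either $h^0(X,jL-N)\geq 2$ for some $j$, and the ratio of two sections is a nonconstant meromorphic function, giving $a(X)\geq 1$. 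Or each such space is one-dimensional, spanned by $t_m$ with divisor $\Delta_m$. Then $t_m\otimes t_{m'}$ and $t_{m+m'}$ sit in comparable spaces. From $\Delta_{m'}$ and $\Delta_m$ with $m'=2m$, we get the relation $2\Delta_m-\Delta_{2m}\in|N|$ after adjusting, so $N$ is effective up to the linear equivalence class. This produces a section of $(2m)L-2N+N = 2mL - N$, and eventually shows $H^0(X,pL)\neq 0$, i.e. $\kappa(X,L)\geq 0$.

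The higher-rank case would be reduced to rank one by applying the same argument to $\Lambda^kE$ and the determinant of the generated subsheaf. In $\Lambda^kE$ the wedge of the $k$ sections gives sections of $\Lambda^kE\otimes(\sum m_i)L$ that all lie in one saturated line subsheaf.

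I expect the main obstacle to be this last bookkeeping step: showing that "no nonconstant meromorphic function" forces an actual nonzero section of a positive multiple of $L$. The danger is that we might only obtain sections of $mL-N$ with a fixed twist $N$ that has no sections. What I would try first is to use the finiteness of the divisors on $X$ when $a(X)=0$ (\cite{K}). The divisors $\Delta_m$ are supported on finitely many irreducible divisors $D_1,\dots,D_s$. Because $h^0=1$ forces rigidity, differences of the form $\Delta_{m}-\Delta_{m'}$ give linear relations $(m-m')L\sim\sum a_iD_i$ with integer $a_i$. By a pigeonhole argument on the finitely many supports, two such relations combine to give an effective combination, hence $pL\sim E'\geq 0$ for some $p>0$.
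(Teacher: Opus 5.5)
Your overall strategy matches the paper's. You wedge the sections into the determinant of the saturated subsheaf $F\subset E$ they generate, compare the resulting sections of $\det F\otimes pL$ for different exponents $p$, and use that $X$ has only finitely many irreducible divisors when $a(X)=0$.

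\textbf{The rank-one step as you first present it does not work.} The doubling trick needs both $m$ and $2m$ to lie in the infinite set of good exponents, and nothing guarantees this. Even then, $t_m^{2}\in H^0(X,2mL-2N)$ and $t_{2m}\in H^0(X,2mL-N)$ only give $N\sim\Delta_{2m}-2\Delta_m$ (your sign is reversed). That divisor has no reason to be effective, so nothing about $pL$ follows, and ``eventually shows $H^0(X,pL)\neq 0$'' is unsupported. Two smaller points:
\begin{enumerate}
\item In the relation $\sum c_i s_{m_i}=0$, the ratios $c_i/c_j$ are meromorphic sections of $(m_j-m_i)L$, not meromorphic functions.
\item The case split on whether $h^0(X,jL-N)\geq 2$ is unnecessary: just assume $a(X)=0$ from the start.
\end{enumerate}

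\textbf{The repair is your last paragraph, but two points need to be made precise.}

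First, ``pigeonhole on the finitely many supports'' is not enough, because two effective divisors with the same support need not be comparable. What you need is a pair of good exponents $m'<m$ with $\Delta_{m'}\leq\Delta_m$ coefficientwise, since then $(m-m')L\sim\Delta_m-\Delta_{m'}\geq 0$. Such a pair exists by Dickson's lemma: every infinite sequence $(v_i)$ in $\mathbb{Z}_{\geq 0}^s$ has indices $i<j$ with $v_i\leq v_j$, which is an iterated pigeonhole. Apply it to the coefficient vectors of the $\Delta_m$ listed by increasing $m$. No rigidity ($h^0=1$) is needed.

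Second, your reduction to rank one needs infinitely many \emph{distinct} exponents $p$ for which a wedge gives a nonzero section of $\det F\otimes pL$. This is the exchange step, which is exactly the paper's pair of maps $d,d'$:
\begin{itemize}
\item Fix $m_1,\dots,m_k$ with $s_{m_1}\wedge\dots\wedge s_{m_k}\neq 0$.
\item For any other good $m$, $s_m$ is a meromorphic combination of the $s_{m_i}$ with some nonzero coefficient $c_j$.
\item Replacing $s_{m_j}$ by $s_m$ then gives a nonzero wedge with exponent $\sum_i m_i-m_j+m$.
\item These exponents are unbounded as $m$ varies, so Dickson's lemma applies to the corresponding divisors.
\end{itemize}

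With these two points in place, your argument is complete. It also spells out the paper's terse phrase ``increasing the number of sections by the number of such irreducible divisors''.
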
 

\begin{proof} We give a proof for completeness. Let $m_0,m_1,\dots, m_r$ be  pairwise distinct integers such that $\exists  s_j\in H^0(X,Hom(-m_j.L,E)), s_j\neq 0$ such that, if $F\subset E$ is the subsheaf  of $E$ generated by the images of the $s_j's$, its rank is $r$ (thus one less than the number of the $s_j's$). We thus obtain, possibly renumbering the $s_j's$, two non-zero maps $d:-(\sum_{j=0}^{j=r-1}m_j).L\to det(F)$, and $d':-(\sum_{j=1}^{j=r}m_j).L\to det(F)$. If $X$ (as in our situation) does not contain any effective nonzero divisor, these maps are isomorphisms, and  we obtain an isomorphism: $d'.d^{-1}:(m_r-m_0).L\to \mathcal{O}_X$. If, more generally, $a(X)=0$, then $X$ contains only finitely many irreducible effective divisors by \cite{K}, and we conclude in the same way, increasing the number of sections by the number of such irreducible divisors.
\end{proof}

Of course, case 2 above should be excluded (it implies that $\kappa(X)=-\infty$). For this, it is sufficient to show that $L=K_X$ is numerically trivial, i.e: that its numerical dimension $\nu(X,L)$ vanishes, where $\nu(X,L)$ is the largest integer $k\geq 0$ such that the $k$-th self-intersection $L^k=K_X^k$ is not numerically zero, or equivalently such that $K_X^k.w^{n-k}>0$ for some/any K\"ahler form $w$ on $X$, since $K_X$ is nef.

One can bound $\nu(X,K_X)$ from above by the following Kawamata-Viehweg type statement (special case  $I(h)=\mathcal{O}_X$ of loc. cit.) :

\begin{lemma}\label{K-V-T} (\cite{T}, Theorem 4.2 (i)) Let $L$ be a nef line bundle on a compact K\"ahler manifold $X$ of dimension $n$, $L$ equipped with a hermitian metric $h$ with $I(h)=\mathcal{O}_X$. Let  $\nu:=\nu(X,L)$.

Then: $H^0(X,\Omega^q_X\otimes L)=H^{n-p}(X,K_X+L)=0, \forall q\geq (n-\nu+1)$.

\medskip

In particular, if $X$ is strictly simple, and if $H^0(X,\Omega^p\otimes (m.K_X))\neq 0$, for some $m>0$, then $\nu(X,L)\leq p$. 

\medskip

For $p=1$, this means that $\nu(X,K_X)\leq 1$ if $H^0(X, \Omega^1_X\otimes K_X)\neq 0$.
\end{lemma}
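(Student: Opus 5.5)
The main vanishing statement is Takegoshi's, and I would quote it directly from \cite{T}, Theorem 4.2 (i), in the special case $I(h)=\mathcal{O}_X$. Read correctly, it says: if $L$ is nef with $\nu(X,L)=\nu$ and $I(h)=\mathcal{O}_X$, then $H^{q}(X,K_X+L)=0$ for all $q\geq n-\nu+1$. The companion vanishing of $H^0(X,\Omega^{n-q}_X\otimes L)$ then follows from Takegoshi's hard Lefschetz surjection. So the only thing to prove is the ``in particular'' part, which I would deduce from the main vanishing as follows.

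First I would set up the hypotheses. Let $X$ be strictly simple. By the proof of Theorem \ref{strict simple}, $K_X$ is nef, and for every $m>0$ the line bundle $L:=m.K_X$ carries a singular hermitian metric $h_m$ with semi-positive curvature current and $I(h_m)=\mathcal{O}_X$. Moreover $\nu(X,m.K_X)=\nu(X,K_X)=:\nu$, since numerical dimension is unchanged by positive multiples. Hence Takegoshi's theorem applies to $L=m.K_X$ and gives
$$H^{q}(X,(m+1).K_X)=0 \quad \text{for all } q\geq n-\nu+1.$$

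Next I would show that a nonzero section forces a nonzero cohomology class in degree $n-p$. Let $0\neq s\in H^0(X,\Omega^p_X\otimes m.K_X)$, and consider the $L$-valued $(n,n-p)$-form $s\wedge w^{n-p}$, where $w$ is a Kähler form. By pointwise Lefschetz, wedging with $w^{n-p}$ is injective on $(p,0)$-forms, so $s\wedge w^{n-p}$ is a nonzero form wherever $s\neq 0$, hence nonzero. In Takegoshi's setting this form is harmonic with respect to $w$ and $h_m$: holomorphic $L$-valued $p$-forms are $\bar\partial$-closed and, by his Bochner–Kodaira argument using semipositivity and $I(h_m)=\mathcal{O}_X$, also $\bar\partial^*$-closed after wedging with $w^{n-p}$. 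This is exactly what makes his map $L^{n-p}$ well defined and surjective. A nonzero harmonic form represents a nonzero Dolbeault class, so
$$H^{n-p}(X,(m+1).K_X)\neq 0.$$

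Finally I would compare the two facts. By the vanishing above this is only possible if $n-p<n-\nu+1$, i.e. $\nu(X,K_X)\leq p$. The case $p=1$, $m=1$ gives the last assertion: $\nu(X,K_X)\leq 1$ whenever $H^0(X,\Omega^1_X\otimes K_X)\neq 0$.

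The step I expect to be the main obstacle is the nonvanishing of the class of $s\wedge w^{n-p}$, that is, the harmonicity or injectivity half of Takegoshi's hard Lefschetz theorem for singular metrics. I would first try to extract it directly from \cite{T}, where the Lefschetz map is realised on harmonic representatives. If that is not explicit enough, I would instead use that $K_X$ is nef and approximate $h_m$ by smooth metrics with curvature $\geq -\varepsilon w$, then pass to the limit in the Bochner formula.
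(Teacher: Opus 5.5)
\textbf{There is a genuine gap: the injectivity step fails.} Your argument breaks where you assert that $s\wedge w^{n-p}$ is harmonic for every holomorphic $s\in H^0(X,\Omega^p_X\otimes L)$, i.e.\ that $s\mapsto[s\wedge w^{n-p}]$ is injective. Semipositivity and $I(h)=\mathcal{O}_X$ do not give this. Take $X=\mathbb{P}^1$ and $L=\mathcal{O}(1)$ with the Fubini--Study metric, which is smooth and positive, so $I(h)=\mathcal{O}_X$ and $\nu=1$, and take $p=0$. Then $H^0(X,L)\neq 0$, while $H^1(X,K_X+L)=H^1(\mathbb{P}^1,\mathcal{O}(-1))=0$. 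More generally, $H^0(X,L)\to H^n(X,K_X+L)=0$ for any ample $L$, by Kodaira.

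The Bochner--Kodaira argument runs in the opposite direction. For holomorphic $s$ and smooth $h$, the K\"ahler identities give $\bar\partial^*(w^{n-p}\wedge s)=0$ if and only if $D's=0$. Bochner--Kodaira--Nakano gives $\|D's\|^2+\|D'^*s\|^2=\int_X\sum_J\big(\sum_{j\notin J}\lambda_j\big)|s_J|^2$, where $\lambda_j\geq 0$ are the eigenvalues of $i\Theta_h(L)$ relative to $w$. This curvature term has no reason to vanish. What Takegoshi proves is that every harmonic $(n,n-p)$-form equals $w^{n-p}\wedge s$ for some holomorphic, $D'$-closed $s$. That yields surjectivity; well-definedness of the map uses only that $w^{n-p}\wedge s$ is $\bar\partial$-closed. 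Regularizing $h_m$ will not repair this, since the obstruction is the curvature term, not a lack of smoothness.

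For the same reason, your ``companion vanishing'' of $H^0(X,\Omega^{n-q}_X\otimes L)$ does not follow from a surjection onto $0$. The same example with $n=q=1$ shows that this vanishing is false.

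\textbf{Comparison with the paper.} The paper does not argue through harmonic forms. It builds the vanishing of $H^0(X,\Omega^p_X\otimes L)$ into its quoted form of Takegoshi's theorem and reads off $\nu\leq p$ for $L=m.K_X$, $h=h_m$. But the example above shows that vanishing of the whole group $H^0(X,\Omega^p_X\otimes L)$ in such a range cannot hold for arbitrary nef $L$ with $I(h)=\mathcal{O}_X$. So the ``in particular'' needs a genuinely additional input exactly at your critical step. You would have to prove that the given $s$ has nonzero image in $H^{n-p}(X,(m+1).K_X)$, for instance by showing that $s$ is $D'$-closed for $h_m$, presumably using strict simplicity. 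In your proposal, strict simplicity is used only to produce $h_m$, never at the point where it would be needed.
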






\section{Strictly simple fourfolds.}\label{ss 4f}

\begin{theorem}\label{t 4f}Let $X$ be a strictly simple compact K\"ahler fourfold. Then $X$ is either hyperk\"ahler, or an \'etale quotient of a simple torus.
\end{theorem}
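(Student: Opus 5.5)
We reduce to excluding case b of Theorem \ref{strict simple}. In case a that theorem already gives the conclusion. So we assume case b and aim for a contradiction. Then $K_X$ is nef and not torsion, every multiplier ideal $I(h_m)$ is trivial, and
\[
\chi(X,\mathcal{O}_X)=0,\qquad \chi(X,mK_X)=0 \ \text{ for all } m,
\]
and $h^0(X,\Omega^p_X)>0$ for some odd $p$, so $p\in\{1,3\}$ since $n=4$.

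\textbf{Step 1: the case $p=1$.} If $h^0(X,\Omega^1_X)>0$, the Albanese map is nontrivial, hence surjective with connected fibres by \cite{U}, 13.7. A nontrivial fibration would give nontrivial subvarieties, which strict simplicity forbids. So the Albanese map is bimeromorphic, hence an isomorphism since $X$ contains no subvarieties at all. Then $X$ is a simple torus and $K_X$ is trivial, contradicting case b.

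\textbf{Step 2: the case $p=3$ with $q(X)=0$.} Now $h^{3,0}>0$ while $h^{1,0}=0$. By Hodge symmetry and Serre duality, $h^{3,0}=h^{0,3}=h^{0,1}(K_X)$, and $h^{2,0}$ is unconstrained so far. Writing
\[
0=\chi(\mathcal{O}_X)=1-0+h^{2,0}-h^{3,0}+h^{4,0},
\]
we have $h^{4,0}=h^0(K_X)=0$, since otherwise $K_X$ would be effective, hence trivial (there are no divisors), which is case a. This gives $h^{3,0}=1+h^{2,0}\geq 1$.

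Next we use Lemma \ref{K-V-T} to bound $\nu:=\nu(X,K_X)$. A nonzero $3$-form $\omega$ gives, via the isomorphism $\Omega^3_X\cong T_X\otimes K_X$, a nonzero section of $T_X\otimes K_X$, i.e. a twisted vector field. In the other direction, Claim 1 of Theorem \ref{strict simple} with $m=0$, $p=3$ relates $H^0(\Omega^3)$ to $H^1(K_X)$. We would apply Takegoshi's vanishing with $L=mK_X$: a nonzero $H^0(\Omega^q\otimes mK_X)$ forces $q\leq 4-\nu$. We also want a nonzero element of $H^0(\Omega^3\otimes mK_X)$ for large $m$. This is where case b is problematic, since it says such sections vanish for $m\geq m_0$. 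Hence the better route is the contrapositive. In case b, Claim 1 of Theorem \ref{strict simple} plus Serre duality give $h^{4-p}((m+1)K_X)=0$ for all $p$ and $m\geq m_0$. By duality this means all $H^i(X,-mK_X)$ vanish for $m$ large.

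\textbf{Step 3: the main obstacle, ruling out $\nu\geq 1$ via Riemann--Roch.} Expand $\chi(mK_X)$ as a polynomial in $m$. Its top coefficient is $K_X^4/24$, and the lower coefficients involve $K_X^3c_1$, $K_X^2(c_1^2+c_2)$, and $K_X c_1c_2$; substituting $c_1=-K_X$ simplifies these. Identical vanishing forces $K_X^4=0$ and $K_X^2\cdot c_2=0$. The goal is to show that $K_X$ is numerically trivial, so $\nu=0$. Then the Bogomolov--Beauville decomposition, or Kawamata's result for nef $K$ with $\nu=0$ in the Kähler setting, gives $K_X$ torsion, a contradiction.

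For this I would use Miyaoka-type semipositivity of $c_2$ against nef classes, or the Kähler version of \cite{O}/\cite{CP}: $T_X$ is not generically positive, so $\Omega^1_X$ is generically nef. Combining this with a Bogomolov--Gieseker type inequality, and with the absence of curves and surfaces (which makes every $(2,2)$ or $(3,3)$ class numerically tested only against Kähler forms), one hopes that $K_X^2c_2=0$ and $K_X^4=0$ force $\nu\leq 1$. It remains to exclude $\nu=1$. I would use the nonzero twisted vector field from the $3$-form: its foliation or zero locus must be trivial or everything, by strict simplicity. This should force $T_X\otimes K_X$ to have a nowhere vanishing section, so $K_X^{-1}\subset T_X$ is a saturated foliation by \cite{CP} with pseudoeffective dual, i.e. $K_X$ has numerical class $0$. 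Its leaves are not algebraic, consistent with strict simplicity. I expect this last step, showing $\nu=0$, to be the main difficulty.
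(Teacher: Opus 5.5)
\textbf{Verdict: there is a genuine gap.} Several of your steps agree with the paper:
\begin{itemize}
\item the reduction to case b of Theorem \ref{strict simple};
\item disposing of $q(X)>0$ via the Albanese map, and of $h^0(X,K_X)\neq 0$;
\item the identity $h^{3,0}=1+h^{2,0}$;
\item the Riemann--Roch computation $\chi(X,mK_X)=\frac{K_X^4}{24}m^2(m-1)^2+\frac{K_X^2\cdot c_2}{24}m(m-1)+\chi(X,\mathcal{O}_X)$.
\end{itemize}
But Step 3, which carries the whole content of the theorem, is not an argument. $K_X^4=0$ only excludes $\nu=4$, which you already know since $K_X$ is not big. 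Miyaoka-type inequalities add nothing here: for instance $(3c_2-c_1^2)\cdot K_X^2\geq 0$ becomes $0\geq 0$. So nothing rules out $\nu\in\{1,2,3\}$.

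Your exclusion of $\nu=1$ also fails, for two reasons. First, strict simplicity does not make the twisted vector field coming from a $3$-form nowhere vanishing. Its zero set can be a nonempty finite set of points, and points are allowed subvarieties. Second, even for a subbundle $-K_X\subset T_X$ with non-algebraic leaves, \cite{CP} only says that $K_X$ is nonnegative on movable classes. That is pseudoeffectivity of $K_X$, which you already have from nefness; it does not give $K_X\equiv 0$.

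\textbf{Missing idea: use several holomorphic forms at once to build foliations whose determinants can be computed exactly.}
\begin{itemize}
\item You left $h^{2,0}$ unconstrained, but $X$ is not projective, so Kodaira's criterion gives $h^{2,0}>0$. Hence there is a $2$-form $\sigma$ and two non-proportional $3$-forms $\tau_1,\tau_2$.
\item If $\sigma^{\wedge 2}\neq 0$, then $K_X$ is trivial. Otherwise $F=\sigma^{\perp}$ is a rank-$2$ foliation with $\det F=-K_X$.
\item $G_i=\tau_i^{\perp}$ are rank-one foliations with $\det G_i=-K_X$. They are distinct because $a(X)=0$.
\item Since $X$ has no effective divisors, generically injective maps between line bundles are isomorphisms. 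The rank $r$ of $J=(G_1+G_2)\cap F$ then gives:
 \begin{itemize}
 \item $2K_X=0$ if $r=0$;
 \item $K_X=0$ if $r=2$;
 \item for $r=1$, a non-split extension $0\to -K_X\to F\to\mathcal{O}_X\to 0$. A splitting would give a nonzero vector field, hence $X$ uniruled or a torus.
 \end{itemize}
\item In the case $r=1$, $F$ is stable with $c_2(F)=0$. L\"ubke's inequality gives $K_X^2\cdot w^2\leq 0$, hence $K_X^2\equiv 0$ since $K_X$ is nef.
\item Then $End(F)$ is polystable with vanishing Chern classes, so it comes from a unitary representation of $\pi_1(X)$.
 \begin{itemize}
 \item Finite image contradicts $h^0(X,End(F))\leq 1$, which is read off from the filtration with graded pieces $-K_X,\mathcal{O}_X,-K_X,K_X$.
 \item Infinite image makes $X$ a torus quotient by \cite{CCE}.
 \end{itemize}
\end{itemize}
The numerical input that actually controls $\nu$ is therefore $K_X^2\equiv 0$, obtained from L\"ubke's inequality applied to $F$, not from Riemann--Roch.
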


\begin{proof} We apply Theorem \ref{strict simple}. In case 1, we get the conclusion from Bogomolov decomposition (\cite{B}).
 We thus assume that we are in case 2. We assume moreover that $q:=h^0(X,\Omega^1_X)=0$ (resp. $h^0(X,K_X)=0$), since otherwise $X$ is isomorphic to its Albanese torus (resp. $K_X$ is trivial).

 We shall prove the following lemma \ref{lk torsion}, which obviously implies Theorem \ref{t 4f}:
 
 \begin{lemma} \label{lk torsion} Let $X$ be a compact K\"ahler smooth fourfold such that $\chi(X,\mathcal{O}_X)=0$. If $X$ does not contain effective divisors or surfaces, $K_X$ is torsion if pseudoeffective\footnote{One may wonder if this last condition is superfluous. A counterexemple were provided by the projectivisation $\Bbb P(E)$ for a rank-$2$ vectorbundle $E$ over a simple non-projective torus $T$ of dimension $3$ such that $h^0(T',f^*(E))=0, \forall T'\to T$ \'etale cover. Such $E$'s do exist on non-projective simple tori of dimension $2$ (see\cite{To}, based on the Serre construction).}.
 \end{lemma} 
 
 Notice that $a(X)=0$.
 
 From Riemann-Roch, Hodge decomposition and $\chi(X,\mathcal{O}_X)=0$, we then get: $h^0(X,\Omega^3_X)=1+h^0(X,\Omega^2_X)$, and $h^0(X,\Omega^2_X)>0$ since $X$ is not projective, by Kodaira's criterion (\cite{K}).
 
 In particular, we have (at least) one (resp. two) non-zero holomorphic two-form $\sigma$ (resp. $3$-forms $\tau_i,i=1,2$ which are non-proportional).
 
 We suppose that $\sigma$ has generic rank $2$ (i.e: $\sigma^{\wedge 2}=0$) otherwise $K_X$ is trivial and the claim follows. 
 
 Let $F=\sigma^{\perp}\subset TX$ be the (rank-$2$) foliation defined by $\sigma$, regular ouside a finite set of curves. The locally free quotient $TX/F$ is equipped with a symplectic $2$-form deduced from $\sigma$. In particular: $TX/F$ is self-dual, $det(TX/F)=\mathcal{O}_X$, and $det(F)=-K_X$. 
 
 Similarly, let $G_i:=\tau_i^{\perp}\subset TX$ be the rank-one foliation defined by $\tau_i$ for $i=1,2$. Then $TX/G_i$ is equipped with a non-vanishing $3$-form, and so $det(TX/G_i)=\mathcal{O}_X$, and $det(G_i)=-K_X, i=1,2$. The $G_i's$ may have singularities at a finite set of curves.

 Since $a(X)=0$ the foliations $G_i$ are different (otherwise $\tau_1$ and $\tau_2$ were meromorphically proportional, hence linearly proportional).
 
 Thus $G_1\cap G_2=\{0\}$ generically, hence if $G:=G_1+G_2$, we get (since $X$ does not contain any effective divisor): $det(G)=-2K_X$, and $det(TX/G)=-K_X-(-2K_X)=+K_X$.
 
 Let now $J:=G\cap F$, and let $r\in \{0,1,2\}$ be its rank.
 
 \medskip
 
 If $r=0$, $TX=G+F$ generically, hence $-K_X=det(TX)=det(G)+det(F)=-2K_X-K_X=-3K_X$. Hence $2K_X=\mathcal{O}_X$.
 
 \medskip
 
 If $r=2$, $F=G$ generically, hence $-K_X=det(F)=det(G)=-2K_X$, and $K_X=\mathcal{O}_X$.
 
 \medskip
 
 Assume now that $r=1$. Let $L:=det(J)$. Let $p_i:G\to G/G_i$ be the projection, $i=1,2$. Not both of its restrictions to $L$ are zero, say it is not zero for $i=1$. Hence $L=det(G/G_1)=-K_X$ (again because $X$ does not contain any nonzero effective divisor). Thus:
 
 (1) $det(J)=-K_X$

 Thus $det(TX/J)=det(TX)-det(J)=-K_X-(-K_X)=\mathcal{O}_X$. We have a natural (generically surjective map: $TX/J\to TX/F$ with kernel $F/J$. Since $det(TX/J)=det(TX/F)=\mathcal{O}_X$, we get: $\mathcal{O}_X=det(F/J)$, and thus an exact sequence: 
 
 (2) $0\to -K_X\to F\to \mathcal{O}_X\to 0$.
 
 This sequence does not split, since otherwise we had injections $\mathcal{O}_X\subset F\subset T_X$, and thus $dim(Aut^0(X))>0$, which would imply, by \cite{L}, Theorem 3.12, that either $X$ is uniruled, or that $q(X)>0$, and thus that $X$ were a (simple) torus, contradicting our hypothesis. From this non-splitting, we deduce that $F$ is stable. Let indeed $H\subset F$ be a rank-one saturated subsheaf. If the natural map $H\to F/J$ is not zero, it is a isomorphism which splits the exact sequence (1). Contradiction. Thus $H=J$. Since $\kappa(X)=-\infty$ by assumption, $K_X$ is not numerically trivial, $K_X.w^{3}>0$, for any K\"ahler form $w$ on $X$, so that $F$ is $w$-stable.

 From the exact sequence (2), we deduce that $c_2(F)=0$ since the sequence is exact outside finitely many curves. Let $w$ be any K\"ahler class on $X$. 
 
 From stability, L\"ubke inequality reads\footnote{This argument is inspired by the same argument (applied to $G$ instead of $F$ in the proof of \cite{HPR}, Theorem 4.4), and replaces the argument based on Takegoshi's K\"ahler version of the Kawamata-Viehweg theorem in the original proof.}:
 
  $((r-1)c_1(F)^2-2r.c_2(F)).w^2=-K_X^2.w^2\leq 0$, and \cite{HPR}, Lemma 4.3 (according to which $K_X^2.w^2\geq 0$ because $K_X$ is pseudoeffective, and there are no subvariety of codimension one or two),  we deduce that $K_X^2.w^2=0$, and so $K_X^2$ is numerically trivial.

We shall now conclude following Miyaoka's strategy in \cite{Miy}, later used in \cite{Ko}.

Consider the bundle $E:=F^*\otimes F=End(F)$. It is polystable, and has $c_1(E)=0, c_2(E)=0$, the last equality because $K_X^2\equiv 0$. It thus arises, by fundamental results of Donaldson, Uhlenbeck-Yau (\cite{Do}, \cite{UY}, \cite{Kob}, proposition 4.4.13, strengthened in \cite{BS})\footnote{I thank Matei Toma for clarifications and the last reference.} from a unitary representation $\rho:\pi_1(X)\to U(4)$. 
There are two cases:

A. The image of $\rho$ is finite. Replacing $X$ by a finite \'etale cover, we may assume that $E$ is a trivial bundle. We thus have: $h^0(X,E)=4$. On the other hand, we have an exact sequence:

(6) $0\to F^*\otimes (-K_X)\to E\to F^*\otimes K_X)\to 0$, and so:

$E$ has thus an increasing filtration with rank-one graded pieces:

 $-K_X\oplus \mathcal{O}_X\oplus -K_X\oplus K_X,$
 so that $h^0(X,E)\leq 1$, since $0=h^0(X, m.K_X)$, for $m=\pm 1$, by assumption. Hence a contradiction. 
 Thus case A does not occur.

B. The image of $\rho$ is infinite. It then follows from \cite{CCE}, Theorem 0.3, that the image of $\rho$ is virtually abelian (as is always the case for $X$ compact K\"ahler with $a(X)=0$, or more generally: $X$ `weakly special', meaning that no finite \'etale cover of $X$ maps surjectively onto a manifold of general type and positive dimension). Thus $X$ is an \'etale quotient of a simple torus, and $K_X$ is torsion, contradicting our assumptions.\end{proof}
 
 \begin{remark} 1. Proving the general fourfold simple case seems to be considerably more difficult, involving a K\"ahler version of the MMP, as seen from the threefold case (compare \cite{CDV} and \cite{CHP}).
 
 2. After this text appeared on arXiv, A. H\"oring informed me of the text \cite{HPR}, especially Theorem 4.4, in which it is proved that a compact K\"ahler fourfold without divisors and surfaces has an \'etale cover which is a torus if it is uniformised by $\Bbb C^4$. As said above, one of their arguments (L\"ubke inequality) permits to simplify our original proof.
 
 \end{remark}

\section{Semi-simple and Kummer reductions.}\label{s ss}

We present here a shortened and simplified exposition of \cite{F82}, since this text may be difficult to find. See also \cite{COP}, \S 2. 

\begin{definition} Let $X\in C$. A covering family $(Z_t)_{t\in T}$ of $X$ is an irreducible analytic subset $Z\subset T\times X$ for some compact irreducible analytic subset with surjective projections $p:Z\to T,q:Z\to X$ such that:

1. the generic fibre $Z_t$ of $p$ is irreducible.

2. the cycle map map $\zeta:T\to BC(X)$ sending $t$ to $Z_t$ is generically injective.

The covering family $(Z_t)_{t\in T}$ is `maximal' if, moreover:

3. For $t\in T$ `general', the only subvariety of $X$ containing $Z_t$ is $X$.

4. For $t\in T$ generic, $Z_t$ is of codimension at least $2$ in $X$ (divisors are excluded).
\end{definition}

\begin{lemma}\label{l sigma} Let $(Z_t)_{t\in T}$ be a maximal covering family of $X$. Then:

1. $q:Z\to X$ is generically finite (of degree $d$ say).

2. $T$ is simple.

Let $\sigma:X\to Sym^d(T)$ be the map sending a generic $x\in X$ to $p(q^{-1}(x))$. Let $\Sigma:=\sigma(X)\subset Sym^d(T)$. Then:

3. $\Sigma$ is semi-simple, `isogeneous' to $T^{\delta}$, for some $0\leq \delta\leq d$. 

Recall that `isogeneous' means that some irreducible compact $\Sigma'\subset \Sigma\times T^r$ exists, generically finite over both $\Sigma$  and $T^r$ for the natural projections.
\end{lemma}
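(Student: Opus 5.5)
We prove the three claims in order. Each rests on two facts: components of $BC$ are compact for spaces in $C$, and the maximality conditions 3 and 4 forbid nontrivial subvarieties through a general $Z_t$.

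\emph{Claim 1: $q$ is generically finite.} Suppose not, so the general fibre $q^{-1}(x)$ has positive dimension. Set $T_x:=p(q^{-1}(x))\subset T$ and let $W_x:=q(p^{-1}(T_x))$ be the union of the $Z_t$, $t\in T_x$.
\begin{itemize}
\item Since $\dim T_x>0$ and the cycle map is generically injective, $W_x$ is a subvariety (after taking the irreducible component through $x$) containing some general $Z_t$ strictly.
\item If $W_x\neq X$, this contradicts condition 3.
\item If $W_x=X$ for general $x$, then all members through $x$ sweep out $X$. Take $t$ general; $Z_t$ contains a general $x$. Consider the incidence $I=\{(t,t'):Z_t\cap Z_{t'}\neq\emptyset\}$ and its fibre $I_t\subset T$. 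The union of the $Z_{t'}$, $t'\in I_t$, equals $X$, and $\dim I_t\geq \dim T_x>0$.
\end{itemize}
This last case needs more work. The cleaner route is a dimension count: $\dim Z=\dim T+\dim Z_t$. If $q$ has positive-dimensional fibres, then $\dim T>\operatorname{codim}Z_t$. One then considers the relation ``$Z_t$ meets $Z_{t'}$'' and the associated chain (meromorphic quotient) fibration of $X$, which exists in class $C$. Its general fibre contains $Z_t$, so by condition 3 that fibre is $X$.

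I expect this step to be the main obstacle. I would instead argue as follows: fix a general $x$ and take the subfamily $T_x$. The union $W_x$ is a proper subvariety unless $\dim T_x+\dim Z_t\geq n$; in the equality-or-more case, replace $W_x$ by the closure of the union of $Z_t$ with $t\in T_x\cap T_y$ for a second general point $y\in Z_t$. Iterating, one gets a proper subvariety strictly containing $Z_t$, using $\operatorname{codim} Z_t\geq 2$ (condition 4) to keep it proper. This contradicts condition 3. So $q$ has finite degree $d$.

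\emph{Claim 2: $T$ is simple.} Let $V\subset T$ be a subvariety through a general $t$, and let $W:=q(p^{-1}(V))$. Then $W$ is a subvariety of $X$ containing the general $Z_t$, so by condition 3, $W=X$ unless $V$ is a point. Hence, by Claim 1, $\dim V\geq \dim T-\dim Z_t\cdot 0$; more precisely, $q$ finite over $X$ forces $\dim p^{-1}(V)\geq n$, so $\dim V\geq \dim T$ and $V=T$. Also $\dim T\geq 2$: otherwise $a(T)=1$, which gives $a(X)\geq 1$, and the pulled-back fibres would yield divisors containing $Z_t$, contradicting condition 3. So $T$ is simple.

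\emph{Claim 3: $\Sigma$ is semi-simple.} Consider the incidence $\Sigma'\subset\Sigma\times T^d$ consisting of ordered $d$-tuples lying over points of $\Sigma$. Its projection to $\Sigma$ is finite, of degree at most $d!$. Its image in $T^d$ is a subvariety $Y\subset T^d$. Next, apply the standard lemma, via the simplicity of $T$ and induction on $d$ using projections $T^d\to T^{d-1}$ and the fibres of $Y$ over its images: a subvariety of $T^d$ with $T$ simple is isogenous to $T^\delta$. The argument is that each projection of $Y$ to a factor is surjective or constant, and the fibres of $Y\to \mathrm{pr}_{d-1}(Y)$ are points or all of $T$, again by simplicity. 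This yields the required $\Sigma'\subset \Sigma\times T^\delta$ with finite projections, for some $0\leq\delta\leq d$.
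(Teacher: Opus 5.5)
Your proof of Claim 1 has a genuine gap, at exactly the step you flagged. The case $W_x\neq X$ is fine. In the case $W_x=X$, neither of your suggestions works.

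The chain-fibration remark yields no contradiction: a general fibre equal to $X$ is perfectly compatible with condition 3.

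The iteration $T_x\cap T_y\cap\cdots$ gives no control on how the dimension drops. Adjoining a single point $y\in Z_t$ can cut $T_x$ down to $\{t\}$ near $t$. The swept-out set then jumps from $X$ directly to $Z_t$, and no intermediate proper subvariety ever appears. For lines in $\Bbb P^3$, $T_x\simeq\Bbb P^2$ and $T_x\cap T_y=\{t\}$ for every $y\neq x$ on $Z_t$; yet a pencil of lines in a plane would have produced the required subvariety.

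More fundamentally, conditions 3 and 4 alone cannot suffice. You need positive-dimensional subvarieties of $T_x$ of small dimension through its general point. If $T_x$ were, say, a simple torus with $\dim T_x+\dim Z_t\geq n$, nothing would contradict maximality.

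The missing input, which is where the class $C$ hypothesis really enters, is the theorem of \cite{Ca81}: for $X\in C$ the general fibre $q^{-1}(x)\simeq T_x$ of $q$ is Moishezon, hence covered by curves. These curves $C\subset T_x$ cover $T$. The subvariety $q(p^{-1}(C))$ has dimension $\dim Z_t+1\leq n-1$ by condition 4 and strictly contains the general $Z_t$, contradicting condition 3. This is the paper's proof.

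Claim 2 is essentially the paper's argument. The bound $\dim T\geq 2$ is immediate, since $\dim T=\operatorname{codim} Z_t\geq 2$ by Claim 1 and condition 4; the detour through $a(T)$ is unnecessary.

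Claim 3 has a second, smaller gap. It is false that each projection of a subvariety of $T^d$ to a factor is `surjective or constant', and false that every subvariety of $T^d$ is isogeneous to a power of $T$. Simplicity only forbids nontrivial subvarieties through a \emph{general} point, so $T$ may contain a curve $C$ (e.g. a $K3$ surface with $a=0$ containing a $(-2)$-curve). Then $C\times\{t_0\}\subset T^2$ is a counterexample to both statements.

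What you must establish, and what the paper uses, is that each projection $\Sigma'\to T$ is surjective. This follows from the irreducibility of $Z$ and the surjectivity of $p$. Directly: if $P_j$ is the image, every general $x$ lies on some $Z_t$ with $t\in P_j$, and $\dim Z=n$ forces $P_j=T$. With this in place, your induction on $d$ via $T^d\to T^{d-1}$ is essentially Lemma \ref{l ssimple}.
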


An immediate consequence is:

\begin{corollary}\label{c sigma} If $a(X)=0$, $X$ is either simple, or admits a maximal covering family, and has thus a fibration $\sigma:X\to \Sigma$ for some positive-dimensional isotypically semi-simple $\Sigma$.
\end{corollary}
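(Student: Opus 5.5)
The statement to prove is Corollary \ref{c sigma}: if $a(X)=0$, then $X$ is either simple or admits a maximal covering family, and in the latter case it has a fibration onto a positive-dimensional isotypically semi-simple $\Sigma$. The plan is to derive this from Lemma \ref{l sigma} by a dichotomy.

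\textbf{Step 1 (dichotomy and a first family).} Assume $X$ is not simple. Since $a(X)=0$ forces $n\geq 2$ (otherwise $X$ would be a curve, hence projective), non-simplicity means $X$ is covered by nontrivial subvarieties. Each such subvariety lies in some irreducible component of $BC(X)$. Since $X\in C$, these components are compact, and there are only countably many of them. A countable union of proper analytic families cannot cover $X$ unless one of them does, by Baire. So some compact irreducible $T_0\subset BC(X)$ has general member $Z_t$ irreducible and nontrivial, and its incidence variety $Z\subset T_0\times X$ surjects onto $X$. This gives a covering family, and conditions 1 and 2 hold after normalising.

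\textbf{Step 2 (making the family maximal).} Divisors are excluded by condition 4. Because $a(X)=0$, $X$ has only finitely many irreducible divisors (\cite{K}), so they cannot cover $X$, and the general $Z_t$ has codimension at least $2$. For condition 3, choose among all covering families one whose general member is maximal with respect to inclusion, i.e.\ of maximal dimension among nontrivial members. If a general $Z_t$ were contained in a strictly larger subvariety $W_t\neq X$, the $W_t$ would again form a covering family with larger members. This is legitimate because, given the general $Z_t$, one can take $W_t$ to be a maximal subvariety containing it, varying in a countable union of Chow components, and Baire again gives a single covering component. The cases $W_t=X$ or $W_t$ a divisor are excluded as above. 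Since $\dim Z_t<n$, this process terminates by dimension, producing a maximal covering family. The main obstacle lies here: one must ensure that the larger subvarieties containing the general $Z_t$ themselves vary in an analytic covering family with irreducible general member. I would handle this by forming, for each Chow component of $BC(X)$, the closed incidence of pairs (member, containing cycle), and applying Baire to the resulting countable union.

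\textbf{Step 3 (the fibration).} Apply Lemma \ref{l sigma} to this maximal family. The map $\sigma: X\to \Sigma\subset Sym^d(T)$ is meromorphic; after passing to a resolution and taking the Stein factorisation, it becomes a fibration to a variety isogeneous to $\Sigma$, still isotypically semi-simple, isogeneous to $T^\delta$ with $T$ simple by parts 2 and 3 of that lemma.

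It remains to check that $\Sigma$ is positive-dimensional. If $\Sigma$ were a point, every general $x$ would lie on the same finite set of $Z_t$'s, contradicting the fact that the $Z_t$ move in a positive-dimensional family $T$ whose members are distinct (condition 2) and cover $X$. Hence $\dim\Sigma>0$, i.e.\ $\delta\geq 1$, which completes the corollary.
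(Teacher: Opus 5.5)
Your proposal is correct and takes the paper's route: the paper states this corollary as an immediate consequence of Lemma \ref{l sigma}, and you apply that lemma in the same way. What you add are the details the paper leaves implicit. These are the existence of a maximal covering family (via compactness and countability of the components of $BC(X)$ together with Baire), the exclusion of divisors using the finiteness of divisors when $a(X)=0$, the Stein factorisation giving connected fibres, and the check that $\delta\geq 1$.
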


\begin{proof} (of Lemma \ref{l sigma}) Claim 1. By \cite{Ca81}, the fibres of $q:Z\to X$ are Moishezon. If they are positive-dimensional, they are covered by curves. And there is thus a covering family $(C_s)_{s\in S}$ of $T$ by curves. The family $W_s:=q(p^{-1}(C_s))$ is thus a covering family of $X$ by cycles of dimension $dim(Z_s)+1$, each containing some $Z_t$, contradicting the maximality of the family $(Z_t)_{t\in T}$. Thus $dim(Z)=dim(X)$ and $q$ is generically finite.

2. If $T$ is not simple, it admits a covering family by cycles $(V_s)_{s\in S}$ of positive dimension less than $dim(T)$. The cycles $W_s:=q(p^{-1}(V_s)), s\in S$ thus form a covering family of $X$ of cycles of dimension  $dim(V_s)+dim(Z_t)>dim(Z_t)$ since $q:Z\to X$ is generically finite. This contradicts the maximality of the family $(Z_t)_{t\in T}$.

3. Let $\pi:T^d\to Sym^d(T)$ be the natural finite projection, and $\Sigma'\subset T^d$ be an irreducible component of $\pi^{-1}(\Sigma)$, surjective over $\Sigma$. The natural projections $p_j:\Sigma'\to T,j=1,\dots,d$, are surjective since $Z$ is irreducible, surjective on $T$. The conclusion then follows from the next Lemma \ref{l ssimple}.
\end{proof}

\begin{lemma}\label{l ssimple} Let $T$ be simple, and $\Sigma'\subset T^d$, irreducible compact, surjective onto each factor. There then exists $K\subset \{1,\dots,d\}$ such that the projection $p_K:\Sigma'\to T^K$ is surjective, generically finite, where $T^K:=T^{k_1}\times\dots\times T^{k_r}$, for $K:=\{k_1,\dots,k_r\}$. 
\end{lemma}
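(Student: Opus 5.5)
We argue by induction on $d$, choosing $K$ greedily. Set $K_1:=\{1\}$; the projection $p_1:\Sigma'\to T$ is surjective by hypothesis, and $T$ is trivially generically finite over itself. Suppose we have constructed $K=\{k_1,\dots,k_r\}$ such that $p_K:\Sigma'\to T^K$ is surjective. If $p_K$ is generically finite we are done. Otherwise we look for an index $j\notin K$ such that $p_{K\cup\{j\}}:\Sigma'\to T^{K}\times T$ is still surjective, and replace $K$ by $K\cup\{j\}$. Since $\dim T^K$ strictly increases at each step and is bounded by $\dim\Sigma'$, the process stops. It can only stop at a $K$ with $p_K$ generically finite, and that is the claim. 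Everything therefore reduces to the following key step: \emph{if $p_K$ is surjective but not generically finite, then some $j\notin K$ exists with $p_{K\cup\{j\}}$ surjective}.

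For the key step, let $F$ be the general fibre of $p_K:\Sigma'\to T^K$ over a general point $u\in T^K$; it has positive dimension. Note first that $p_{\{1,\dots,d\}}$ is the inclusion $\Sigma'\subset T^d$, which is injective. Hence the coordinates $j\notin K$ cannot all be constant on $F$, so some $j\notin K$ has $p_j(F)$ of positive dimension. Fix such a $j$ and consider the image $W:=p_{K\cup\{j\}}(\Sigma')\subset T^K\times T$. This is an irreducible compact subvariety surjecting onto $T^K$, and its fibre over $u$ is $W_u=p_j(F)$, which is positive-dimensional.

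Suppose $W\neq T^K\times T$. Then the fibres $W_u$, for $u$ general, are proper subvarieties of $T$. If some component of $W_u$ has dimension less than $\dim T$, then the family $(W_u)_{u\in T^K}$ is a family of positive-dimensional proper subvarieties of $T$. Since $p_j:\Sigma'\to T$ is surjective, this family covers $T$: every point of $T$ lies in some $W_u$, because the union of the $W_u$ is $p_j(\Sigma')=T$. Taking an irreducible component of the associated family in $BC(T)$, which is compact since $T\in C$, gives a covering family of $T$ by nontrivial subvarieties. This contradicts the simplicity of $T$. Hence $W=T^K\times T$, i.e.\ $p_{K\cup\{j\}}$ is surjective, which proves the key step.

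The main obstacle is making the covering argument rigorous. One must pass from the general fibres $W_u$, which may be reducible, to an honest irreducible covering family of $T$ by irreducible subvarieties that are neither points nor all of $T$. I would do this by taking the Stein factorization, or an irreducible component of the relative cycle space $BC(W/T^K)$. This produces a family whose members are irreducible components of the $W_u$, of dimension strictly between $0$ and $\dim T$. Irreducibility of $W$ together with the surjectivity of $W\to T$ (inherited from $p_j$) ensures that some component family still dominates $T$. That family then contradicts the definition of simplicity in Definition~\ref{d sk}, namely that $T$ is covered only by its trivial subvarieties.
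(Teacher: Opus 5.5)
Your proof is correct and is essentially the paper's argument. The paper takes $K$ inclusion-maximal with $p_K$ surjective and then shows, exactly as in your key step, that for each $j\notin K$ the sets $p_j(p_K^{-1}(y))$ are proper analytic subsets forming a covering family of $T$, hence finite by simplicity, so $p_K$ is generically finite. Your greedy induction is just the contrapositive of this, and your passage to irreducible components via the relative cycle space fills in a detail the paper leaves implicit.
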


\begin{proof} Choose $K\subset \{1,\dots,d\}$ maximal such that $p_K:\Sigma'\to T^K$ is surjective. If $K=\{1,\dots,d\}$, $\Sigma'=T^d$. Otherwise, let $j\notin K$, and let $L:=K\cup j$. Then $p_L:\Sigma'\to T^L$ is not surjective. For $y\in T^K$ generic, $p_j(p_K^{-1}(y))\subsetneq T$ is a covering family of (not necessarily irreducible) analytic subsets. They are thus finite sets since $T$ is simple. The map $p_K:\Sigma'\to T^K$ is thus generically finite, and $\Sigma'$ is semi-simple, isogeneous to $T^d$, as claimed.
\end{proof}

\begin{theorem}\label{red ss}(\cite{F82}) Let $X\in C$ with $a(X)=0$. There exists a unique fibration $\sigma_X:X\to S_X$, with $S$ semi-simple (positive-dimensional) such that for each such fibration $\sigma:X\to S'$, there is a unique factorisation $\tau: \Sigma_X\to \Sigma'$ such that $\sigma'=\tau\circ \sigma_X$. 

There is also a relative version: if $f:X\to Y$ is a fibration in C, there is a unique fibration $\sigma_f:X\to S_f, \sigma'_f:S_f\to Y$ such that $f=\sigma'_f\circ \sigma_f$, and inducing $\sigma_{X_y}: X_y\to S_y$ on the general fibre $X_y$ of $f$. 

Then $\sigma_X$ (resp. $\sigma_f$) is the semi-simple reduction of $X$ (resp. of $f$).

If $dim(X)=0$, $dim(S_X)>0$.
\end{theorem}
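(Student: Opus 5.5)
Existence and uniqueness of $\sigma_X$ will come from taking the ``join'' of all fibrations of $X$ onto semi-simple bases, then showing that this join is still semi-simple and that it is realized by finitely many such fibrations.

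Let $\sigma_i:X\to S_i$, $i\in I$, be all fibrations onto positive-dimensional semi-simple manifolds. Corollary \ref{c sigma} guarantees at least one such fibration when $X$ is not simple. When $X$ is simple, we take $\sigma_X=\mathrm{id}$, since $X$ is then itself semi-simple.

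For two such fibrations $\sigma_1,\sigma_2$, I consider the product map $(\sigma_1,\sigma_2):X\to S_1\times S_2$. Let $S_{12}$ be the normalisation of its image and take the Stein factorisation to get a fibration $X\to S_{12}'$. The first key step is to show that $S_{12}'$ is semi-simple. By definition $S_1\times S_2$ is isogeneous to a product $\prod T_j$ of simple manifolds. An irreducible subvariety of $\prod T_j$ that surjects onto each factor can then be handled by a multi-type version of Lemma \ref{l ssimple}, and I would argue in two steps.

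\begin{enumerate}
\item Group the simple factors into isogeny classes. For factors in distinct classes, the same argument as in Lemma \ref{l ssimple} applies: fibres of one projection inside another simple factor form covering families of analytic subsets, so they are finite or everything. Hence the subvariety maps generically finitely onto a subproduct.
\item Within one isogeny class, replace each factor by an isogeneous common model and apply Lemma \ref{l ssimple} directly.
\end{enumerate}

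This yields that the image of $X$ is isogeneous to a subproduct, hence semi-simple. Passing to the Stein factorisation preserves isogeny class, since the finite part is generically finite, so $S_{12}'$ is semi-simple as well.

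The second step is to show that the join stabilises. The dimensions of the bases are bounded by $n$, so repeating the join produces fibrations with increasing base dimension. After finitely many steps we reach some $\sigma:X\to S$ such that every other $\sigma_i$ factors through it: otherwise joining with $\sigma_i$ would strictly increase the dimension, because the fibres are connected and the join is a fibration.

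I expect the real difficulty to lie here. When the dimension does not increase, the join and $S$ have the same dimension and both are fibrations, so $S\to S_{12}'$ is bimeromorphic and $\sigma_i$ factors through $S$ meromorphically. Uniqueness of the factorisation $\tau$ then holds because $\sigma_X$ is surjective.

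For the relative version, I would run the same construction fibrewise using the relative Barlet space $BC(X/Y)$, which has compact components since $X\in C$. The families defining semi-simple reductions of the general fibres $X_y$ are countable in number and are locally constant on a general set. Choosing the component of $BC(X/Y)$ whose general member restricts to the graph of $\sigma_{X_y}$ produces $S_f$. The main technical point is that the universal property on general fibres glues, which follows from uniqueness fibrewise.

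Finally, $\dim S_X>0$ holds in positive dimension: either $X$ is simple and $S_X=X$, or Corollary \ref{c sigma} gives a positive-dimensional semi-simple quotient.
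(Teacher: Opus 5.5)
Your absolute case is correct and is essentially the paper's proof. You take a fibration onto a semi-simple base of maximal dimension, and note that if another $\sigma_i$ did not factor through it, the Stein factorisation of the join $X\to S_1\times S_2$ would have a semi-simple base of larger dimension. Your remark that the argument of Lemma \ref{l ssimple} works verbatim for a product of simple manifolds of different isogeny types is also right. Grouping into isogeny classes, as in Lemma \ref{l sss}, is needed only for the finer product decomposition, not for the semi-simplicity of the image.

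The gap is in the relative version. A Baire-category choice among the countably many components of $BC(X/Y)$ yields a component $\mathcal{T}$ whose members are the fibres of $\sigma_{X_y}$ only for $y$ in some set not contained in countably many proper analytic subsets of $Y$. The fibration $X\to S_f\to Y$ defined by $\mathcal{T}$ is therefore known to induce the semi-simple reduction only on those fibres. To get the general fibre you must show that semi-simplicity of $S_{f,y}$, and maximality of its dimension, cannot hold on such a set without holding for general $y$.

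This is the Zariski-regularity of simplicity and semi-simplicity in families, Lemma \ref{l Zreg}. The obstructions are themselves parametrised by components of relative Barlet spaces:
\begin{itemize}
\item for simplicity, covering families of nontrivial cycles on $S_{f,y}$;
\item for semi-simplicity, whether finitely many relatively maximal covering families have cycles through a general point meeting in a finite set, as in Lemma \ref{l sigma}.
\end{itemize}
So whether they occur is decided over a Zariski-open subset of $Y$, up to proper analytic subsets.

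Fibrewise uniqueness of $\sigma_{X_y}$ does not provide this. It tells you the reduction of each fibre is unique, not that a single component of $BC(X/Y)$ realises it for general $y$. Your claim that the families are ``locally constant on a general set'' is exactly what needs proof. The paper obtains the relative statement from this Zariski-regularity together with the general construction of relative reductions in \cite{Ca04}, appendix.
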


\begin{proof}Let $\sigma: X\to S$ be a fibration onto a semi-simple manifold $S$ of maximum dimension. Let $\sigma':X\to S'$ be another such fibration. Assume that $\sigma$ does not factorise through $\sigma'$. Let $\sigma"=\sigma\times \sigma':X\to S\times S'$, and let $S"\subset S\times S'$ be its image. Let finally $s:X\to \Sigma, u:\Sigma\to S", \sigma"=u\circ s$ be the Stein factorisation of $\sigma"$. Since $dim(S")>dim(S)$ by the non-factorisation hypothesis, the conclusion of the first Claim follows from the next lemma \ref{l sss}.

The second Claim follows from the first and from \cite{Ca04}, appendix, once the Zariski-regularity of the properties simple and semi-simple are established (in Lemma \ref{l Zreg} below).

The last Claim follows from Corollary \ref{c sigma}.
\end{proof}

\begin{lemma}\label{l sss} Let $S:=S_1\times \dots\times S_r$ be semi-simple manifolds in C, with each $S_i=T_i^{d_i}$, the $T_i$'s being simple manifolds, $T_i$ and $T_j$ non-isogeneous if $i\neq j$. 
Let $\Sigma\subset S$ be a subvariety, and $\Sigma_i\subset S_i$ be the image of the $i$-th projection $p_i:S\to S_i$. Assume that each $\Sigma_i$ is surjectively mapped onto $T_i$, for each factor of $T_i^{d_i}$ . Then:

1. $\Sigma=\Sigma_1\times \dots \Sigma_r$. 

2. $\Sigma$ is semi-simple. 

3. If $\Sigma'\subset S_1\times S_2$, where $S_i,S_2$ are semi-simple, and if $\Sigma$ is mapped surjectively onto $S_1$ and $S_2$ by the natural projections, then $\Sigma$ is semi-simple.
\end{lemma}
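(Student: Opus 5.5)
The plan is to reduce everything to Lemma \ref{l ssimple}, plus one "no correspondence between non-isogeneous simple manifolds" statement that I will prove first.

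\textbf{Step 0 (key sublemma).} Let $T,T'$ be simple and non-isogeneous, and let $W\subset T\times T'$ be irreducible and surjective onto both factors. I claim $W=T\times T'$. Consider the fibres $W_t\subset T'$ for $t\in T$ general. They form a covering family of analytic subsets of $T'$. Since $T'$ is simple, they are either finite or all of $T'$.
\begin{itemize}
\item If they are finite and $\dim W=\dim T$, then $W$ is generically finite over $T$.
\item By symmetry, the fibres over $T'$ are finite or all of $T$.
\item If both projections are generically finite, $W$ is an isogeny, which is excluded.
\item If $W$ is finite over $T$ but not over $T'$, then the fibres over $T'$ are all of $T$, so $W=T\times T'$, contradicting finiteness over $T$.
\end{itemize}
Hence $W=T\times T'$. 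The same argument works with $T$ and $T'$ replaced by powers $T^a$ and $T'^b$, once we know from Lemma \ref{l ssimple} that any subvariety of $T^a$ surjecting on each factor is isogeneous to some $T^k$. The general case follows by induction: a subvariety of $T^k\times T'^l$ surjecting onto both factors, with $T,T'$ non-isogeneous, is the full product. To handle the inductive step I will fibre over one simple factor at a time and use that a general fibre over $T$ is a subvariety of $T^{k-1}\times T'^l$ that still surjects onto the $T'$ factors.

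\textbf{Claim 1.} Each $\Sigma_i\subset T_i^{d_i}$ surjects onto each factor, so by Lemma \ref{l ssimple} it is isogeneous to some $T_i^{k_i}$. Let $\Sigma'$ be the preimage of $\Sigma$ in $\prod_i T_i^{k_i}$, obtained via the generically finite correspondences. Then $\Sigma'$ surjects onto each $T_i^{k_i}$. Induction on $r$ with Step 0 should give $\Sigma'=\prod T_i^{k_i}$. Hence $\dim\Sigma=\sum\dim\Sigma_i$, and since $\Sigma\subset\prod\Sigma_i$ is irreducible of the same dimension, $\Sigma=\prod\Sigma_i$.

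\textbf{Claims 2 and 3.} Claim 2 follows from Claim 1, since each $\Sigma_i$ is isogeneous to $T_i^{k_i}$ and a product of semi-simple manifolds is semi-simple. For Claim 3, I first replace $S_1$ and $S_2$ by their isogenous models $\prod T_i^{d_i}$, grouping the simple factors of both into isogeny classes. Replacing each factor by a fixed representative of its class via finite correspondences, $\Sigma$ becomes a subvariety of $\prod_i T_i^{e_i}$ with pairwise non-isogeneous $T_i$. Surjectivity onto $S_1$ and $S_2$ gives surjectivity onto every simple factor, so Claim 2 applies.

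The main obstacle is keeping track of irreducibility and surjectivity when passing through isogenies (finite correspondences). I would handle this by always taking irreducible components that dominate the relevant factors.
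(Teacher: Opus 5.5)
Your Step 0 is correct for two simple non-isogeneous factors. Your reduction of Claim 1 to the case $\Sigma_i=T_i^{k_i}$ via Lemma \ref{l ssimple}, and your Claims 2 and 3, are the paper's.

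The gap is the passage from $T\times T'$ to $T^k\times T'^l$, which is where the real content of Claim 1 lies. ``The same argument works with powers'' is not true. The finite-or-everything dichotomy for the fibres $W_t$ comes from simplicity of the target, and $T'^l$ is not simple for $l\geq 2$. Lemma \ref{l ssimple} does not restore the dichotomy: it only applies to subvarieties already known to surject onto every factor, and even then only gives an isogeny with some $T'^m$.

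Your inductive step then assumes that a general fibre $W_t\subset T^{k-1}\times T'^l$ over a $T$-factor still surjects onto $T'^l$. Surjectivity onto each single $T'$-factor does follow from Step 0, applied to the image of $W$ in $T\times T'$. But that is too weak to feed the induction: $T\times\Delta_{T'}\subset T\times T'^2$ surjects onto every factor without being the full product. Joint surjectivity onto $T'^l$ amounts to the image of $W$ in $T\times T'^l$ being the full product, which for $k=1$ is exactly the statement being proved. Moreover $W_t$ may be reducible, and surjectivity of $W_t$ onto each block separately does not give one component surjecting onto both.

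One way to close this, in the spirit of the paper's slicing argument, is to induct on $k+l$. Suppose $W\subsetneq T^k\times T'^l$ surjects onto $T^k$ and $T'^l$, with $k,l\geq 1$.
\begin{enumerate}
\item By induction, the images of $W$ in $T^{k-1}\times T'^l$ and in $T^k\times T'^{l-1}$ are full.
\item The fibres of these two projections each lie in a single simple factor and cover it. They are therefore finite, since otherwise $W$ would be the whole product. Hence $\dim W=(k-1)\dim T+l\dim T'=k\dim T+(l-1)\dim T'$, so $\dim T=\dim T'$.
\item Fix general values of the other $k+l-2$ coordinates. The slice of $W$ in the remaining $T\times T'$ has pure dimension $\dim T$, and by step 1 it surjects onto both factors.
\item Take the Stein factorisation of the projection of $W$ onto these $k+l-2$ coordinates. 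In the intermediate finite cover, the locus of points whose fibre surjects onto a given factor is closed and dominates $T^{k-1}\times T'^{l-1}$, hence is everything. So a general fibre gives an irreducible component of the slice dominating both factors.
\item That component has dimension $\dim T=\dim T'$ and surjects onto both, so it is an isogeny between $T$ and $T'$, a contradiction.
\end{enumerate}
The same argument works verbatim with $T^k$ replaced by any product of simple manifolds none of which is isogeneous to $T'$. Your induction on $r$ needs this more general form, because there the complementary factor $\prod_{i<r}T_i^{k_i}$ is not a power of a single simple manifold.
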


\begin{proof} Claim 1. Each $\Sigma_i$ is isogeneous to some $T_i^{K_i}$, after Lemma \ref{l ssimple}. We can thus assume that $\Sigma_i=S_i$, for each $i$. Let $T_{i,k}$ be the $k$-th factor of $T_i^{d_i}$, for $1\leq k\leq d_i$, and $ \hat{S}_{j,\ell}$ the product of all factors of $S$, omitting the single factor $T_{j,\ell}$. If $\Sigma\subsetneq S$, there are some $i<j$ and factors $T_{i,k}, T_{j,\ell}$ of $S_i, S_j$ respectively such that, for general $y'\in \hat{S}_{j,\ell}$, the restriction to $(S\cap (T_j\times \{y'\}))$ of the projection of $S$ onto $T_{i,k}$ is not surjective, hence generically finite and so induces an isogeny between $T_j$ and $T_i$, contradicting our hypotheses.

Claim 2. By Lemma \ref{l ssimple}, each $\Sigma_i$ is semi-simple. Hence so is their product.

Claim 3. $S,S'$ are isogeneous to products of simple manifolds $T_i$, one can thus replace them by these products, so that $\Sigma$ is contained in a product of $T_i's$, and surjective on each factor. It is then sufficient to group the $T_i's$ by isogeneity types, and apply Claims 1 and 2. 
\end{proof}

\begin{remark} One easily checks that the proofs of lemmas \ref{l ssimple} and \ref{l sss} remain valid with semi-simple replaced by Kummer of algebraic dimension zero, and simple $T_i$'s by complex (compact) tori of algebraic dimension zero, since holomorphic maps between tori are affine, and isogeneity between complex tori is realised by \'etale covers. From this one deduces the following analogue \ref{red k} of Theorem \ref{red ss}. The first two Claims of the next result are proved in (\cite{F83},\S7). The third Claim is a consequence of (\cite{F83}, Remark 7.1).
\end{remark}

\begin{theorem} \label{red k}  1. Let $X\in C$ with $a(X)=0$. There exists a unique fibration $k_X:X\to K_X$, with $K$ Kummer such that for each such fibration $k':X\to K'$, there is a unique factorisation $t: K_X\to K'$ such that $k'=t\circ k_X$. 

2. There is also a relative version: if $f:X\to Y$ is a fibration in C, with $a(f)=0$ (meaning that $a(X_y)=0$ for $y\in Y$ general), there is a unique fibration $k_f:X\to K_f, u:K_f\to Y$ such that $f=u\circ k_f$, inducing $k_{X_y}: X_y\to K_{f,y}=K_{X_y}$ on the general fibre $X_y$ of $f$. 

Then $k_X$ (resp. $k_f$) is the Kummer reduction of $X$ (resp. of $f$).

3. Let $f:X\to Y$ be as above, $g:X'\to X$ be generically finite surjective, with $f\circ g=g'\circ f', f':X'\to Y', g':Y'\to Y$ the Stein factorisation of $f\circ g$. There is a natural map $K(f,f'):K_{f'}\to K_f$ such that $k_f\circ K(f,f')=g'\circ K_{f'}:K_{f'}\to Y$, and $K(f,f')$ is generically finite surjective. In particular: $k(f'):=k(X'/Y')=k(X/Y)=k(f)$. 

\end{theorem}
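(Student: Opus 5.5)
Mimic the proof of Theorem \ref{red ss}, replacing ``semi-simple'' by ``Kummer with $a=0$'' and using the Remark above, which says that Lemmas \ref{l ssimple} and \ref{l sss} remain valid in the torus setting. For Claim 1, I take a fibration $k:X\to K$ onto a Kummer manifold of maximal dimension. Such a fibration exists, possibly onto a point, and the dimension is bounded by $n$. Now let $k':X\to K'$ be another fibration onto a Kummer manifold. Since $a(X)=0$, the targets satisfy $a(K)=a(K')=0$. I form $k\times k':X\to K\times K'$ and call its image $K''$. Replacing $K,K'$ by isogeneous products of tori of algebraic dimension $0$ (Definition \ref{d sk}.6), $K''$ sits in a product of tori and surjects onto each factor. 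The torus analogue of Lemma \ref{l sss}.3 then shows that $K''$ is Kummer. Here I use that holomorphic maps between tori are affine, so the subvariety splits into a product of subtori up to isogeny. The Stein factorisation of $X\to K''$ is a finite cover of a Kummer variety, hence again Kummer of dimension $\dim K''$. By maximality $\dim K''=\dim K$, so $K''\to K$ is generically finite, and after Stein factorisation $k'$ factors through $k$. The factorisation is unique because $k$ is surjective. Uniqueness of $k_X$ up to bimeromorphism follows from the universal property.

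For Claim 2, I apply the relative machinery of \cite{Ca04}, appendix, as for Theorem \ref{red ss}. The main input is the Zariski-regularity of the property ``Kummer with $a=0$'' in families of fibres, together with the relative Barlet cycle space $BC(X/Y)$, which is compact in class C. Concretely, I would realise $k_{X_y}$ via the fibres of the Kummer reduction of $X_y$, viewed as cycles in $BC(X/Y)$. These cycles sweep out a component of $BC(X/Y)$ dominating $Y$, which yields a meromorphic quotient $X\to K_f$ over $Y$. By Claim 1 applied fibrewise, this quotient restricts to $k_{X_y}$ on the general fibre. The obstacle here is to show that the fibrewise Kummer reductions vary analytically. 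For this I would use the relative Albanese map in the sense of \cite{Ca85} on suitable relative étale or Galois covers, or simply invoke \cite{F83}, \S7.

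For Claim 3, I restrict to the general fibres $X'_{y'}\to X_y$, which are generically finite and surjective. The composite $X'_{y'}\to X_y\to K_{X_y}$ is a fibration onto a Kummer manifold after Stein factorisation, so by the universal property it factors through $K_{X'_{y'}}$. This gives $K(f,f')$ fibrewise, and it globalises by the uniqueness in Claim 2. The hard step is surjectivity and generic finiteness, i.e. $k(X'_{y'})\le k(X_y)$. To get it, I push the Kummer reduction of $X'_{y'}$ down to $X_y$. The quotient $K_{X'_{y'}}$ is isogeneous to a torus quotient, and I would take the Galois closure of $X'\to X$ with group $G$. Then $G$ acts on $K_{X'_{y'}}$ by uniqueness of the reduction, and the quotient $K_{X'_{y'}}/G$ is again Kummer and receives a fibration from $X_y$. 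This gives $k(X_y)\ge k(X'_{y'})$, so $K(f,f')$ is generically finite. Hence $k(f')=k(f)$. Handling the Galois closure in class C, following \cite{F83}, Remark 7.1, is the step I expect to be most delicate.
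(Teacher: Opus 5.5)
Your proposal follows the paper's own route. Claim 1 transposes the proof of Theorem \ref{red ss} via the torus analogue of Lemma \ref{l sss}, and Claim 2 rests on Zariski-regularity and \cite{F83}, \S7, as in the paper. For Claim 3, which the paper only attributes to \cite{F83}, Remark 7.1, your universal-property-plus-Galois-closure argument works once you replace $X'$ by its Galois closure over $X$ and note that a finite quotient of a Kummer space with $a=0$ is again Kummer by isogeny invariance (Definition \ref{d sk}.6).

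The one step to tighten is why $K''$ is Kummer: affineness of maps between tori does not give it. What gives it is $a(K'')=0$, inherited from $a(X)=0$, together with Ueno's structure theorem, which forces any subvariety $V$ of a torus with $a(V)=0$ to be a translated subtorus.
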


\begin{remark} It is easy to check that the semi-simple reduction of a complex torus $T$ with $a(T)=0$ is an affine fibration $\sigma: T\to T'$, where $T'$ is isogenous to a product of simple tori (of algebraic dimension zero). Its Kummer reduction is of course its identity map.
\end{remark} 

\begin{lemma}\label{l Zreg} Let $f:X\to Y$ be a fibration.

1. The general $X_y$ is simple (resp. semi-simple) if some $X_y$ is.

2. If the general $X_y$ is semi-simple, there is a generically finite $u:X'\to X$ such that, if $f':X'\to Y', v:Y'\to Y$ is the Stein factorisation of $f\circ u=v\circ f'$, then $X'=T_1\times_{Y'} \dots\times _{Y'} T_{r-1}\times_{Y'} T_r$, where $\tau_i:T_i\to Y', i=1,\dots, r$ has general fibres simple.
\end{lemma}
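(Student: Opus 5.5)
We need to prove Lemma \ref{l Zreg}: simplicity and semi-simplicity are Zariski-regular in families, and a semi-simple fibration splits, after a generically finite base change, as a fibre product of fibrations with simple fibres.

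\textbf{Claim 1, simple case.} I would argue through the relative Barlet-Chow space $BC(X/Y)$, whose components are compact because $X\in C$. Suppose the general fibre $X_y$ is not simple. Then for general $y$ there is a covering family of $X_y$ by nontrivial subvarieties. Since there are only countably many components of $BC(X/Y)$, one component $B$ must contain these families for $y$ in a non-meagre set. Taking the image of $B$ in $Y$ is then a closed analytic subset containing a non-meagre set, so it is all of $Y$.

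Let $W\subset B\times_Y X$ be the incidence graph. Its projection to $X$ is closed and contains the general fibres, so it is surjective. By properness of $f_*:B\to Y$ and upper semicontinuity of fibre dimension, the incidence graph surjects onto $X_y$ for every $y$ in a Zariski-dense open set. The same holds for the conditions that the cycles be nontrivial, that is of dimension strictly between $0$ and $\dim X_y$.

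This shows that the set of $y$ with $X_y$ non-simple is either contained in a countable union of proper subvarieties or is everything. Hence if some smooth fibre is simple, the general one is. The main care needed is to check that for a special fibre the limit cycles stay nontrivial; I would take "some $X_y$" to mean a fibre in the locus where $f$ is smooth and cycle dimension is constant.

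\textbf{Claim 1, semi-simple case, and Claim 2.} I would use Corollary \ref{c sigma} and Lemma \ref{l sigma} relatively. A maximal covering family of $X_y$ spreads, by the same countability argument, to a component of $BC(X/Y)$. This gives a relative map $\sigma_f:X\to \Sigma_f\subset \mathrm{Sym}^d_Y(T)$ over $Y$, where $T\to Y$ parametrises the family.

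By the simple case, the fibres $T_y$ are simple for general $y$, since simplicity of $T_{y_0}$ spreads. Taking an irreducible component $\Sigma'$ of the preimage in $T^d_Y:=T\times_Y\dots\times_Y T$ and applying Lemma \ref{l ssimple} fibrewise gives a generically finite map from $\Sigma'$ onto a fibre product $T^K_Y$.

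Semi-simplicity of $X_y$ means that $\sigma_{X_y}$ is generically finite. I would then iterate on isogeny types, using Lemma \ref{l sss} in the relative setting. After a generically finite base change $Y'\to Y$, obtained by Stein factorisation and which kills the monodromy permuting the factors, this produces a generically finite $u:X'\to X$ with $X'$ bimeromorphic to $T_1\times_{Y'}\dots\times_{Y'}T_r$.

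Here each $\tau_i:T_i\to Y'$ has simple general fibre. Conversely, such a decomposition existing for one fibre spreads over the general fibre. This is the semi-simple part of Claim 1.

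\textbf{Main obstacle.} I expect the hardest step to be the monodromy and base-change bookkeeping in Claim 2. The problem is making the choice of maximal families, and the index set $K$ of Lemma \ref{l ssimple}, uniform in $y$. My first attempt would be to pass to the normalisation of the relevant component of $BC(X/Y)$ and to the Galois closure of $\Sigma'\to\Sigma_f$.
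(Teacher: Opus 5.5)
Your overall route is the paper's. For Claim 1 you use compactness and countability of the components of $BC(X/Y)$. For Claim 2 you use finitely many relatively maximal covering families $T_j\to Y$, the relative maps $\sigma_j:X\to \mathrm{Sym}^{\delta_j}(T_j)$ over $Y$, and a component of the preimage of the graph of $\sigma_1\times\dots\times\sigma_r$ in the fibre product of the $T_j^{\delta_j}$, with Lemma \ref{l ssimple} applied fibrewise.

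Your simple case is sound and more explicit than the paper's one-line reformulation. You do not need to shrink to a Zariski-open set. Since $q(W)=X$, every fibre is already covered by the cycles of $B$ over it, all of the dimension fixed by $B$. A Baire argument in $BC(X_{y_0})$ then shows that any smooth $X_{y_0}$ is non-simple.

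Two small corrections in Claim 2:
\begin{itemize}
\item Simplicity of the general $T_y$ is directly part 2 of Lemma \ref{l sigma}; no spreading from a special $T_{y_0}$ is needed.
\item Generic finiteness of a single $\sigma$ only yields isotypic semi-simplicity (part 3 of Lemma \ref{l sigma}). Semi-simplicity is in general detected by several families at once; for $S\times S$ one uses the two families of slices. As in the paper, what you need is $\sigma_1\times\dots\times\sigma_r$ generically finite.
\end{itemize}

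The step that does not follow is the semi-simple half of Claim 1: ``such a decomposition existing for one fibre spreads over the general fibre.'' Your mechanism for simplicity runs the other way. Non-simplicity of the general fibre is witnessed by a component of $BC(X/Y)$ dominating $Y$, which then covers every fibre, so non-simplicity is a closed property. Semi-simplicity of one fibre $X_{y_0}$ is instead witnessed by the \emph{existence} of maximal covering families of $X_{y_0}$. These may lie in components of $BC(X/Y)$ that do not dominate $Y$. Nothing in your argument makes them deform to nearby fibres; in particular Claim 2 does not help, since it starts from a semi-simple general fibre.

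What the countability argument does give is a dichotomy. Suppose $X_y$ is semi-simple for all $y$ in a set not contained in a countable union of proper analytic subsets. Off such a union, every maximal covering family of $X_y$ is induced by a component of $BC(X/Y)$ dominating $Y$. There are only countably many finite collections of such components. For each collection, generic finiteness of $\sigma_1\times\dots\times\sigma_r$ on $X_y$ is a Zariski-open condition in $y$, by semicontinuity of fibre dimension. Hence one collection works on a non-empty Zariski-open set, and the general fibre is semi-simple.

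This Zariski-regularity is what Theorem \ref{red ss} actually uses via \cite{Ca04}. It is also all that the paper's own sketch covers, since it only reformulates semi-simplicity of the general fibre through relatively maximal families. To get the statement for a single special fibre you would need an extra deformation argument for the maximal families of $X_{y_0}$, which neither your proposal nor the paper provides.
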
 

\begin{proof} We sketch the proof, up to standard technicalities.

1. The assertion means (for the simple case) that there is no $\tau: T \to Y$, component of $BC(X/Y)$ which induces on the general fibre $X_y$ of $f$ a maximal covering family of $BC(X_y)$ of cycles of codimension at most $2$. In the semi-simple case, it means that the intersection through a general point $x$ of $X$ of the cycles parametrised by  such $\tau: T\to Y$ relatively covering maximal families intersect in a finite set.

2. Let $\tau_j: T\to Y, j\in \{1,\dots, r\},$ be a finite set of such relatively maximal and covering family of cycles in $BC(X/Y)$: its general member $T_y$ is thus simple by Lemma \ref{l sigma}. Moreover, for each $j$, if $Z_j\subset T_j\times X$ is the incidence graph of this family, the projection $q_j: Z_j\to X$ is surjective and generically finite of degree $\delta_j$. We thus define, (as in the proof of Lemma \ref{l sigma}), a map $\sigma_j:X\to Sym^{\delta_j}(T_j)$ over $Y$ by: $\sigma_j(x)=(p_j)_*(q_j^{-1}(x)), p_j:Z_j\to T_j$ being the projection on the first factor. The map $\sigma_j$ is over $Y$ defined by: $\sigma_j(X_y)\subset Sym^{\delta_j}(T_{j,y})$, with $T_{j,y}:=(f_*)^{-1}(y)$, $f_*:BC(X/Y)\to Y$ being the natural projection map which sends to $y$ a cycle in $X_y$.

The general fibre of $f$ is semi-simple if and only if we can choose enough of the families $T_j$ such that the map $\sigma:=\sigma_1\times\dots \times \sigma_r:X\to Sym^{\delta_1}(T_1)\times _Y\dots _Y\times Sym^{\delta_r}( T_r):=S$ is generically finite onto its image (in general, this map defines the semi-simple reduction of $f$ by choosing the dimension of the image maximum). We then conclude the proof by taking a component $X'$ surjective on $X$ of the inverse image in $X\times T$ of the graph of $X"\subset X\times S$ of $\sigma$ under the finite projection $T:=T_1^{\delta_1}\times _Y\dots _Y\times T^{\delta_r}\to Sym^{\delta_1}(T_1)\times _Y\dots _Y\times Sym^{\delta_r}(T_r)$. Such a component $X'$ is indeed generically finite on $X$, equipped with projections on the $T_i^{\delta_i}$ compatible with those of $X$ on the $Sym^{\delta_i}$. 
\end{proof}

\begin{proposition}\label{p ks}(\cite{F83}, 10.2) Let $a':X\to A, a:A\to K, k:K\to Y$ be a factorisation of $f=k\circ a\circ a'$, with $a(f)=0$ and $k:K=K(f)\to Y$ the Kummer reduction of $f$\footnote{Or more generally, a fibration such that $k(f)=k(k)$.}, and $a:A\to K$ the algebraic reduction of $a\circ s:X\to K$. Then: $q(a)=0$, and $a(a')=k(a')=0$. 
\end{proposition}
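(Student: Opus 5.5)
We unwind the definitions in the order of the factorisation $f=k\circ a\circ a'$ and prove the three claims $q(a)=0$, $a(a')=0$, $k(a')=0$ one at a time, using the universal properties of the relative algebraic reduction, the relative Albanese map and the relative Kummer reduction (Theorem \ref{red k}). Throughout we restrict to a general fibre $X_y$ of $f$. There $a(X_y)=0$, and $K_y$ is the Kummer reduction of $X_y$. The space $A_y$ fibres over $K_y$ with fibres $A_{y,t}$, $t\in K_y$, and $a|_{A_y}$ is the algebraic reduction of the map $X_y\to K_y$.

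\emph{Step 1: $a(a')=0$.} This is the defining property of the relative algebraic reduction. Since $a:A\to K$ is the algebraic reduction of $a\circ a':X\to K$, the map $a'$ induces an isomorphism of relative meromorphic function fields over $K$. If a general fibre $X_{y,s}$ of $a'$ (over $s\in A$) had $a>0$, the relative algebraic reduction of $a'$ (\cite{Ca81}, \cite{F81}) would give a factorisation $X\to A''\to A$ with $A''\to K$ having larger relative algebraic dimension than $A\to K$. This contradicts the maximality of $A$.

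\emph{Step 2: $k(a')=0$.} Take the relative Kummer reduction $k_{a'}:X\to K_{a'}\to A$, which exists by Theorem \ref{red k}.2 because $a(a')=0$ by Step 1. Suppose its general fibre over a point of $A$ has positive dimension. On a general fibre $X_{y,t}$ of $X\to K$ this yields a fibration $X_{y,t}\to (K_{a'})_{y,t}\to A_{y,t}$, whose fibres over $A_{y,t}$ are Kummer and whose base $A_{y,t}$ is Moishezon. Since $a(X_y)=0$, we would like to combine this with the Kummer base $K_y$ to produce a strictly larger Kummer quotient of $X_y$, contradicting the universality in Theorem \ref{red k}.1. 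This step is the main obstacle. It requires showing that a Kummer fibration over a Moishezon base, sitting inside a manifold of algebraic dimension zero, is isotrivial and eventually becomes a product after a finite base change. Then the tori glue with $K_y$ into a single Kummer space, using Theorem \ref{red k}.3 to control the invariant under generically finite covers. We would first try the period-map/rigidity argument for families of tori with $a=0$: a nonconstant period map on a Moishezon base would produce meromorphic functions on $X_y$, which is impossible since $a(X_y)=0$. Hence the variation is zero, and after an étale base change the relative torus is a product. This yields the required Kummer quotient.

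\emph{Step 3: $q(a)=0$.} Apply the relative Albanese map of \cite{Ca85} to $a:A\to K$. If $q(a)>0$, the general fibre $A_{y,t}$ maps nontrivially onto its Albanese torus. Since $A_{y,t}$ is Moishezon, that torus is an abelian variety, and we get a relative family of abelian varieties over $K$. Composing with $X\to A$, we obtain on $X_y$ a fibration over a family of abelian varieties over the Kummer space $K_y$. The same isotriviality argument as in Step 2, which uses $a(X_y)=0$ to kill both the variation and the algebraic part, would then force this family to be trivial after finite étale base change. This contradicts the maximality of $K_y$ in the sense of Theorem \ref{red k}.1. Equivalently, $k(k)$ would have to increase, violating the hypothesis $k(f)=k(k)$ recorded in the footnote.
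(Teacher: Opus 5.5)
Your Step 1 is wrong: it is not a formality, and the argument you give for it fails. The general fibre of a (relative) algebraic reduction need not have algebraic dimension zero. For a $2$-dimensional torus $T$ with $a(T)=1$, the algebraic reduction $T\to E$ has elliptic curves as fibres; this is exactly why \S\ref{s iM} has to iterate algebraic reductions. Your maximality argument breaks down because the relative algebraic reduction $X\to A''\to A$ of $a'$ only gives $A''_t\to A_t$ Moishezon fibres. The space $A''_t$ is then an iterated Moishezon space of length $2$, sandwiched between $X_t$ and its algebraic reduction $A_t$. Hence $a(A''_t)=\dim A_t$, and $A''\to K$ gains nothing over $A\to K$.

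Step 1 also never uses that $K$ is the \emph{Kummer reduction}, yet this hypothesis is essential. Take a generic extension $0\to S\to X\to K'\to 0$ of a $2$-torus $K'$ with $a(K')=0$ by a $2$-torus $S$ with $a(S)=1$; then $a(X)=0$. Relative to the non-maximal Kummer quotient $X\to K'$, the relative algebraic reduction is $X\to X/E'\to K'$, where $E'\subset S$ is the elliptic curve with $S/E'$ elliptic. So $a(a')=1$ and $q(a)=1$. Thus $a(a')=0$ needs the same input as the other two claims. You have to analyse the Moishezon tower $A''_t\to A_t$ through the structure recalled in \S\ref{s iM} (unirational prehomogeneous fibres over a relative torus), and use the maximality of the Kummer reduction to kill the torus part and then the whole tower. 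Since your Step 2 reaches Theorem \ref{red k}.2 only through Step 1, it inherits this gap.

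The isotriviality on which Steps 2 and 3 rest is asserted, not proved, and the justification you offer does not work. A nonconstant period map on the Moishezon base $A_{y,t}$ could only produce meromorphic functions on $A_{y,t}$, hence on the fibre $X_{y,t}$, not on $X_y$. So $a(X_y)=0$ yields no contradiction. Moreover, for tori of algebraic dimension zero the period space has no algebraic structure that would turn a nonconstant period map into meromorphic functions. What is known in this direction (Example \ref{ex var=0}.3, from \cite{F83}) is the nontrivial statement that the variation factorises through the algebraic reduction of the base.

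Even granting zero variation, ``trivial after a finite \'etale base change'' ignores the torsor/extension problem. In Step 3, the relative Albanese of $A\to K$ over $K_y$ may, after covers, be a non-split extension of a torus by an abelian variety: a torus of algebraic dimension zero that is not a product, precisely because Poincar\'e reducibility fails. What you actually need is that the image of $X_y$ in it is a translated subtorus (e.g. by Ueno's theorem, since that image has $a=0$) of dimension $>\dim K_y$. This contradicts the maximality of the Kummer reduction, after Theorem \ref{red k}.3 to pass to covers.

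As written, your proposal locates the difficulty but does not overcome it. The paper itself gives no proof here and simply quotes Fujiki, \cite{F83}, 10.2.
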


\begin{corollary}\label{c ks} 1. Let $f:X\to A\to K\to Y$ be as in Proposition \ref{p ks}. The fibres of $a$ are then unirational prehomogeneous. Moreover, if $s'_{a'}:X\to S, s_{a'}: S\to A$ is the semi-simple reduction of $a'=s_{a'}\circ s'_{a'}:X\to A$, the general fibre of $s_{a'}:S\to A$ is semi-simple without Kummer factor.

2. Let $g:X'\to X$ be generically finite onto. Let $f:X\to S\to A\to K\to Y$ be as in the preceding situation of Corollary \ref{c ks}.1. 

Let $f':X'\to Y', g':Y'\to Y$ be the Stein factorisation of $f\circ g=g'\circ f'$. 

Let $f':X'\to S'\to A'\to K'\to Y'$ be the decomposition of $f':X'\to Y'$ analogous to the one for $f$ of the present Corollary \ref{c ks}.1. 

There are then natural maps, all generically finite onto: $K'\to K, A'\to A, S'\to S$ making the natural diagram relative to the factorisations of $f,f'$ commutative. 

\end{corollary}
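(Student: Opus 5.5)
We prove the two claims of Corollary \ref{c ks} separately, using Proposition \ref{p ks} as the main input.

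\textbf{Claim 1.} Proposition \ref{p ks} gives $q(a)=0$. It also gives $a(a')=k(a')=0$. The general fibre $A_z$ of $a:A\to K$ is the base of the algebraic reduction of the general fibre $X_z$ of $a\circ a':X\to K$, so $A_z$ is Moishezon. We want to show that $A_z$ is unirational and prehomogeneous.

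\begin{enumerate}
\item The natural route is to argue by contradiction. If $A_z$ were not rationally connected, its MRC quotient would be a positive-dimensional projective manifold with $\kappa\geq 0$, hence non-uniruled. Using the relative Albanese and the Kummer reduction of $a\circ a'$, such a quotient should produce either a relative torus quotient of $X/K$ of positive dimension, which contradicts the maximality of $K$ as Kummer reduction, or positive irregularity of $A_z$, which contradicts $q(a)=0$.
\item Prehomogeneity should come from the relative automorphism structure. The fibres of $K(f)\to Y$ are Kummer, hence quotients of tori, and the family of fibres $A_z$ over $K$ should be acted on, after an \'etale base change, by the torus translations. Combined with $a(a')=0$, this forces the fibres of $a'$ over the fibres of $a$ to be permuted by a connected algebraic group acting with a dense orbit on $A_z$. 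I expect this to be the \emph{main obstacle}, and the first thing to try is the argument of \cite{F83}, \S 10, via the relative Lie algebra of vector fields.
\item For the second assertion of Claim 1, take the semi-simple reduction $s'_{a'}$ of $a'$ (Theorem \ref{red ss}, relative version). Its general fibre $S_z$ over $A$ is semi-simple, and by Lemma \ref{l Zreg} it is, after a finite base change, a fibre product of simple families. If some factor were Kummer, then composing with the Kummer reduction would give $k(a')>0$, contradicting Proposition \ref{p ks}. Isogeny invariance of $k$ is Theorem \ref{red k}.3, which handles the passage through the finite cover.
\end{enumerate}

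\textbf{Claim 2.} Apply the functoriality of each reduction in turn.
\begin{enumerate}
\item Theorem \ref{red k}.3 gives a generically finite surjective map $K'\to K$ compatible with $Y'\to Y$.
\item The algebraic reduction is functorial under generically finite maps, since meromorphic functions pull back and the relative algebraic dimension is preserved. This gives $A'\to A$ over $K'\to K$, generically finite because $a(X'/K')=a(X/K)$.
\item For the semi-simple reductions, the universal property in Theorem \ref{red ss} applied fibrewise to $X'\to S\times_A A'$ gives a factorisation through $S'$. The resulting map $S'\to S$ is generically finite by the dimension equality: semi-simplicity and simplicity are preserved under isogeny, and Lemma \ref{l Zreg} keeps the construction Zariski-regular in families.
\end{enumerate}
Commutativity of the diagram follows from the uniqueness statements in Theorems \ref{red ss} and \ref{red k}.
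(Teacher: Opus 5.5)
\textbf{Gap: the first assertion of Claim 1 is not established.} Your proof of the second assertion of Claim 1 and of Claim 2 is at the same level of detail as the paper. There, $a(X/A)=k(X/A)=0$ gives the semi-simple reduction of $a'$ with no Kummer factor, and $K'\to K$ is handled by Theorem \ref{red k}.3. The maps $A'\to A$ and $S'\to S$ are generically finite because finite covers of Moishezon manifolds, resp. of semi-simple manifolds without Kummer factor, keep that property, applied to the general fibres of $A\to K$ and $S\to A$. What is missing is the proof that the fibres of $a$ are unirational prehomogeneous.

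\textbf{Your step 1 cannot work without prehomogeneity.} A non-uniruled projective manifold can have $q=0$ and no torus quotient at all, for example a K3 surface or a simply connected surface of general type. So nothing in the relative Albanese or Kummer reduction produces the contradiction you want. There are two further problems. The claim $\kappa\geq 0$ for the MRC base is the non-vanishing conjecture; only pseudo-effectivity of $K$ is known. And even a successful contradiction would only give rational connectedness, not unirationality.

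\textbf{The missing idea is a known prehomogeneity theorem.} Your step 2, which you leave open, is where all the content lies. The mechanism you suggest, translations on the Kummer fibres of $K\to Y$ combined with $a(a')=0$, is not the one that operates. The input you need is the known theorem (\cite{Ca85}, \S 9--10; \cite{F83}): if $V\to W$ is a fibration in $C$ with Moishezon general fibres and $a(V)=a(W)$, then those fibres are prehomogeneous. The paper uses this same input at the end of the proof of Theorem \ref{t a=0}. Apply it to $A_y\to K_y$ for general $y\in Y$. Here $a(A_y)=a(K_y)=0$, and this comes from $a(f)=0$, not from $a(a')=0$.

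\textbf{Once you have prehomogeneity, the MRC argument is unnecessary.} By Lieberman (\cite{L}, Theorem 3.22), a compact K\"ahler prehomogeneous manifold is a fibre bundle over its Albanese torus with unirational prehomogeneous fibre. The vanishing $q(a)=0$ from Proposition \ref{p ks} makes that Albanese a point, so the fibres of $a$ are unirational prehomogeneous. This is exactly the paper's proof of Claim 1: prehomogeneity plus $q=0$. Your steps 1 and 2 should therefore be reversed, with step 1 reduced to this one-line consequence.
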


\begin{proof} The first Claim is a direct consequence of Proposition \ref{p ks}, since the fibres of $A\to K$ are prehomogeneous with $q=0$. Since $a(X/A)=0=k(X/A)$, the semi-simple reduction of $X/A$ exists and has no Kummer factor.

The existence and surjectivity of the maps $K'/K, A'/A, S'/S$ as well as the commutativity of the diagram are easy. The fact that $K'/K$ is generically finite onto is Theorem \ref{red k}.3. The fact that $A'/A$ and $S'/S$ are generically finite onto then just follows from the fact that finite covers of Moishezon (resp. semi-simple manifolds without Kummer factor) enjoy this same property, this fact being applied to the general fibres of $A\to K$ and $S\to A$. 
\end{proof}





\section{Remarks on simple Kummer manifolds.}\label{s k}

Let $T$ is a complex torus, and $G$ a finite group of holomorphic automorphisms of $T$ (operating faithfully), and $X$ a smooth model of $T/G$. Let $n:=dim(T)=dim(X)$. Notice that $X$ and $T/G$ have the same fundamental group, and that $H^{p,0}(X)=H^{p,0}(T)^G$ by \cite{Fr}.

Recall that a complex torus $T$ is t-simple if it does not contain any nontrivial subtorus, and simple if it does not contain any nontrivial subvariety (and if its dimension is $n\geq 2$). Thus simple implies t-simple. Conversely, if $T$ is t-simple, it is simple if and only if either non-projective, or, equivalently,  if $a(T)=0$ (the algebraic reduction being a quotient by a subtorus, for any $T$). 

If $T$ is t-simple, and if $g\neq 1$ is an automorphism of finite order $m$ of $T$, the fixpoints of $g$, if any, are isolated, and if $y$ is a fixpoint of $g$, the eigenvalues of $g$ on $T_y$, the tangent space of $T$ at $y$, are all primitive $m$-th roots of unity (\cite{BL'}, \S 13.2, most of it  valid for tori not only Abelian varieties.).

\begin{lemma}\label{can}  If $T$ is not an Abelian variety, and if the fixpoints of any $1\neq g\in G$ are isolated, the singularities of $T/G$ are canonical if $n=2$, and terminal if $n\geq 3$. In particular, $\kappa(X)=\kappa(T/G)=0$. More precisely: $H^0(T,\Omega^2_T)^G$ is generated by $2$-forms of rank $2$. 
\end{lemma}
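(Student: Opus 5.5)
The plan is to apply the Reid–Tai criterion to the quotient $T/G$. Since every $1\neq g\in G$ has only isolated fixpoints and $n\geq 2$, $G$ contains no pseudo-reflections. Hence $T/G$ has only quotient singularities, which lie at the images of the fixpoints. At a fixpoint $y$ of $g\neq 1$, write the eigenvalues of $dg_y$ on $T_y$ as $\exp(2i\pi a_j/m)$ with $0<a_j<m$; no eigenvalue equals $1$ because $y$ is isolated. The age is then $\mathrm{age}(g)=\sum_j a_j/m$. By Reid–Tai, the singularities are canonical if and only if $\mathrm{age}(g)\geq 1$ for all such $g$ and fixpoints $y$, and terminal if and only if $\mathrm{age}(g)>1$.

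So the key point is a lower bound for the age, and this is where non-projectivity enters. Let $\rho(g)$ be the linear part of $g$, acting on $V=T_0T\cong H^0(T,\Omega^1_T)^*$. The action of $g$ on $H^1(T,\Bbb Z)\otimes\Bbb C=V^*\oplus\bar V^*$ is defined over $\Bbb Q$. Therefore the eigenvalue multiplicities $n_\zeta$ of $\rho(g)$ satisfy: $n_\zeta+n_{\bar\zeta}$ is constant on Galois orbits of roots of unity.

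I would then show that the ``CM-type'' configuration, where $n_\zeta\,n_{\bar\zeta}=0$ for every $\zeta$, forces $T$ to be an abelian variety. Decompose $H^1(T,\Bbb Q)$ along the cyclotomic factors of the minimal polynomial of $g$; each piece gives a subtorus, up to isogeny. On a piece where $\Bbb Q(\zeta_d)$ acts and the holomorphic eigenvalues form a CM type, the complex structure is the one of a CM abelian variety, and $T$ is isogenous to the product of these pieces. This is the step I expect to be the main obstacle: it requires carrying out the decomposition into isotypic pieces carefully when $T$ is not assumed t-simple, so that the eigenvalues need not all be primitive. In the t-simple case, the fact recalled above from \cite{BL'} (all eigenvalues are primitive $m$-th roots) makes it clean.

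Granting that step, since $T$ is not abelian, some $\zeta$ occurs in $\rho(g)$ together with $\bar\zeta$, contributing $a/m+(m-a)/m=1$ to the age. Hence $\mathrm{age}(g)\geq 1$, which gives canonical singularities when $n=2$. When $n\geq 3$, at least one further eigenvalue $\neq 1$ contributes a positive amount, so $\mathrm{age}(g)>1$ and the singularities are terminal.

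Next I would deduce the statement on $\kappa$. Since $T\to T/G$ is étale in codimension one, $K_{T/G}$ is $\Bbb Q$-Cartier, and it is torsion because $K_T$ is trivial. Canonical singularities give $K_X=\pi^*K_{T/G}+E$ with $E\geq 0$ exceptional for a resolution $\pi:X\to T/G$. Hence $h^0(X,mK_X)=h^0(T/G,mK_{T/G})$, and this is $1$ for suitable $m$, so $\kappa(X)=\kappa(T/G)=0$.

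Finally, for the $2$-forms, I would take $g\in G$ and a basis $e_1^*,\dots,e_n^*$ of eigen-covectors of $\rho(g)$ on $V^*$. A $g$-invariant $2$-form is then a combination of the $e_i^*\wedge e_j^*$ with $\zeta_i\zeta_j=1$, that is of pairs $(\zeta,\bar\zeta)$. Each such form is decomposable, so of rank $2$. For non-cyclic $G$, I would refine this by decomposing $V^*$ into $G$-isotypic components. The invariant $2$-forms pair an irreducible $G$-representation with its dual, so they are invariant pairings $W\otimes W^*\to\Bbb C$. I would then argue that these are spanned by rank-$2$ forms after choosing bases adapted to a maximal abelian subgroup; I expect this to need some care.
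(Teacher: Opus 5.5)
Your Reid--Tai set-up and your $\kappa$ argument are fine; the paper does not even spell the latter out. The key age bound, however, you obtain by a genuinely different route.

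The paper never looks at the rational structure of $\rho(g)$. A resolution $X$ of $T/G$ is non-projective because $T$ is, so Kodaira's criterion gives $h^{2,0}(X)>0$. Freitag's theorem $H^{2,0}(X)=H^0(T,\Omega^2_T)^G$ then yields a nonzero invariant $s=\sum_{j<k}c_{jk}\,dz_j\wedge dz_k$. In coordinates diagonalising $g$, any $c_{jk}\neq 0$ forces $a_j+a_k\equiv 0 \pmod m$, hence $a_j+a_k=m$ since all $a_j>0$.

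Your argument proves the same fact (existence of $j\neq k$ with $\zeta_j\zeta_k=1$) one element at a time, with CM theory replacing Kodaira and Freitag. The step you flagged is not an obstacle and needs no t-simplicity. The splitting $H^1(T,\Bbb Q)=\bigoplus_{d\mid m}\ker\Phi_d(g^*)$ is into rational sub-Hodge structures, each a $\Bbb Q(\zeta_d)$-vector space. In the CM configuration every $\Bbb Q(\zeta_d)$-line is itself a sub-Hodge structure of CM type, so $T$ is isogenous to a product of CM abelian varieties.

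One correction is needed. If the only self-conjugate eigenvalue you find is $-1$ with multiplicity one, it contributes $1/2$, not $1$, to the age. So the dichotomy must read ``either $\zeta_j\zeta_k=1$ for some $j\neq k$, or $T$ is abelian''. This still holds, because in the second case $\ker(g^*+1)$ is $0$ or an elliptic curve. The paper's route gets this automatically, since a $2$-form needs two distinct indices.

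The genuine gap is your last step, the rank-$2$ clause for non-cyclic $G$. For a single $g$, and by simultaneous diagonalisation for abelian $G$, your argument is the paper's. But the refinement via isotypic components and a maximal abelian subgroup cannot work.

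Suppose $V^*=W\oplus W^*$ with $W$ irreducible, $\dim W\geq 2$, $W\not\cong W^*$ and $(\Lambda^2W)^G=0$. Then $(\Lambda^2V^*)^G$ is the line spanned by the duality pairing, which has rank $2\dim W$, so it contains no rank-$2$ element at all. The forms $e_i^*\wedge f_i$ built from an eigenbasis of an abelian subgroup are not $G$-invariant, and no choice of basis changes this.

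Such $W$ do occur among fixed-point-free representations, e.g.\ the $3$-dimensional ones of the Frobenius complement $\Bbb Z/7\rtimes\Bbb Z/9$. So no argument at the level of bases or representations can succeed; any obstruction would have to come from the lattice and the non-algebraicity of $T$.

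The paper's own proof only establishes the elementwise statement: in $g$-diagonal coordinates every invariant $s$ involves only $dz_j\wedge dz_k$ with $a_j+a_k=m$. That is all the age bound needs. Proposition \ref{k vs hk} assumes $G$ abelian, and Question \ref{q k} is left open, although the clause would settle it if it held for arbitrary $G$. You should therefore claim the rank-$2$ statement only elementwise or for abelian $G$.
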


\begin{proof} Since $T$ is not projective, so is $X$, and Kodaira's theorem (\cite{Kod}) implies that $h^0(X,\Omega^2_X)>0$. If $G$ acts without fixpoints, the quotient $\pi:T\to T/G$ is \'etale, hence $T/G$ is smooth, and $\kappa(T/G)=\kappa(T)=0$. 
Otherwise, we apply the Reid-Tai criterion. Let $y\in T$ be a fixpoint of some $g\neq 1$ in $G$. 

Let $m>1$ be the order of $g$, $exp(\frac{2\pi i.a_j}{m}), 0\leq a_j<m, j=1,\dots, m$ being the eigenvalues of $g$ acting on $T_y$, the tangent space of $T$ at $y$. The singularity of $T/G$ at $\pi(y)$ is canonical (resp. terminal) if $age(g):=\sum_ja_j\geq m$ (resp. $age(g)>m$). By a result of Freitag (\cite{Fr}) $H^0(X,\Omega^2_X)=H^0(T,\Omega^2_T)^G$. Let  $0\neq s\in H^0(A,\Omega^2_T)^G$. Write $s=\sum_{j<k}c_{jk}.dz_j\wedge dz_k$ in linear coordinates of $T_y$ diagonalizing $g$. Thus $g^*(s)=\Sigma_{j<k} exp(2\pi i.(a_j+a_k)).c_{jk}.dz_j\wedge dz_k$ and $m$ divides $a_j+a_k$ if $c_{jk}\neq 0$. Since the singularities of $g$ are isolated, $0<a_j, \forall j$. Thus $a_j+a_k=m$ if $c_{jk}\neq 0$. From which follows that $age(g)=m$ if $n=2$, and $age(g)>m$ if $n\geq 3$. Thus the conclusion. \end{proof}

We shall apply some of the arguments and results of of \cite{Ca20}. We denote with $\kappa_1(T/G)=\kappa_1(X)$ the Kodaira dimension in $\{2n-1,\dots, 0,-\infty\}$ of the tautological line bundle $\mathcal{O}_P(1)$ on $P:=\Bbb P(\Omega^1_X)$, which measures the rate of growth of $h^0(X,Sym^k(\Omega^1_X))$ as $k\geq 0$ tends to $+\infty$. 

\begin{lemma}\label{kappa_1} Let $T/G$ be as above. Let $g\in G$ be a generator of $G$ of order $m>1$ with an isolated fix point $y\in T$. Then:

1. $h^0(X,Sym^k(\Omega^1_X))=0, \forall k>0$.

2. $\pi_1(T/G)=\pi_1(X)$ is finite abelian if $G$ is cyclic, generated by $g$, and $T/G$ is simply-connected unless $m=p^k$ for some prime $p$, in which case $\pi_1(X)$ is a quotient of  $\Bbb Z_p^{\oplus 2n}, n:=dim(T)$.
\end{lemma}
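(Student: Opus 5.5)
Both claims should follow from the local structure at the fixed point $y$, combined with the fact that $G$-invariant tensors on $T$ are constant.

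\emph{Claim 1.} I would first reduce to $T$. Any section $s\in H^0(X,Sym^k(\Omega^1_X))$ restricts to $X$ minus the exceptional locus, which equals $(T/G)$ minus finitely many points. Pulling back by $\pi$ gives a $G$-invariant section of $Sym^k(\Omega^1_T)$ on $T$ minus a finite set, and it extends over that set by Hartogs. Since $\Omega^1_T$ is trivial, such a section is a constant symmetric tensor, that is, an element of $Sym^k(T_y^*)$ invariant under $G$, in particular under $g$. I would diagonalize $g$ on $T_y$ with eigenvalues $\zeta^{a_j}$, where $\zeta=exp(2\pi i/m)$. A monomial $dz^{\alpha}$ is then $g$-invariant iff $m$ divides $\sum_j\alpha_ja_j$. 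This does not immediately rule out invariants. The extra input I would use is extension to the resolution: $s$ must extend holomorphically across the exceptional divisor over $\pi(y)$. The cleanest test is the weighted blow-up with weights $a_j/m$ (the toric resolution of $\Bbb C^n/\langle g\rangle$), where $dz^{\alpha}$ acquires vanishing order $\sum_j\alpha_j a_j/m-|\alpha|$ along the exceptional divisor. Because $0<a_j<m$, this order is negative for every $\alpha\neq 0$, so $s$ acquires a pole along the exceptional divisor unless $s=0$. Since every $a_j<m$ (eigenvalues $\neq 1$ because the fixed point is isolated), the argument should be robust. The main obstacle is making this rigorous for an arbitrary smooth model $X$. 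I would use that $H^0(X,Sym^k\Omega^1_X)$ is a bimeromorphic invariant, so it suffices to compute on one resolution dominating the weighted blow-up locally.

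\emph{Claim 2.} I would use Armstrong's theorem: $\pi_1(T/G)=\pi_1^{orb}/N$, where $\pi_1^{orb}$ is the affine crystallographic group $\Gamma=\{$lifts of $G$ to $\Bbb C^n\}$, an extension of $G$ by $\Lambda\cong\Bbb Z^{2n}$, and $N$ is the normal subgroup generated by elements having fixed points. Since $\pi_1$ is a bimeromorphic invariant of smooth models and the quotient singularities are rational, $\pi_1(X)=\pi_1(T/G)$. The lift $\tilde g$ of $g$ fixing a lift of $y$ lies in $N$, and so does each conjugate $\lambda\tilde g\lambda^{-1}$. Hence $N$ contains $\tilde g(\lambda\tilde g\lambda^{-1})^{-1}$, which is translation by $(1-g)\lambda$. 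Therefore $\pi_1(T/G)$ is a quotient of $\Lambda/(1-g)\Lambda$, because $\tilde g$ generates $\Gamma$ modulo $\Lambda$ when $G=\langle g\rangle$. This group is finite, since $1-g$ is invertible on $\Lambda\otimes\Bbb Q$ (no eigenvalue $1$). It is abelian and has order $|\det(1-g)|$.

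To finish Claim 2, I would compute $\det(1-g)$ on $\Lambda\otimes\Bbb Q$. The eigenvalues of $g$ there are the $\zeta^{a_j}$ together with their conjugates, all primitive $m$-th roots of unity by the remark above. So $\det(1-g)$ is a power of $\Phi_m(1)$. Now $\Phi_m(1)=p$ if $m=p^k$, and $\Phi_m(1)=1$ otherwise. In the second case $\pi_1=0$. In the first case $\Lambda/(1-g)\Lambda$ is a $p$-group with at most $2n$ generators, hence a quotient of $\Bbb Z_p^{\oplus 2n}$; more precisely, $p\in(1-\zeta)^{\varphi(m)}$ shows it is killed by a power of $p$. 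The only delicate point here is checking that $N$ is generated by the conjugates of $\tilde g$ together with $\Lambda$-translates, which is standard.
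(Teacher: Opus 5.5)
Your proof is correct. For Claim 2 it is essentially the paper's argument made self-contained. The paper takes the surjectivity of $\pi_*:\pi_1(T)\to\pi_1(T/G)$ from \cite{Ca20}, Lemma 5.19, kills $(1-g)\Lambda$ via $\pi_*\circ g_*=\pi_*$ for loops based at the fixed point, and bounds the exponent by $\Phi_m(1)$ using $\Phi_m(g)=0$. You get both the surjectivity and the relation from Armstrong's theorem, and bound the order by $|\det(1-g)|=\Phi_m(1)^{2n/\varphi(m)}$. For Claim 1 the paper only refers to the proof of \cite{Ca20}, Lemma 5.18. Your weighted blow-up computation is a complete local proof that uses only the absence of the eigenvalue $1$ at $y$. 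To see there is no cancellation, note that the $du^k$-coefficient of $s$ is $u^{-k}\sum_\alpha c_\alpha(\prod_j w_j^{\alpha_j})z^\alpha$ with all $w_j>0$. Distinct invariant $z^\alpha$ are distinct characters of the torus, so this coefficient is nonzero of negative order along the divisor.

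A few small repairs.
\begin{enumerate}
\item If $G$ is not cyclic, $T/G$ is locally $\Bbb C^n/G_y$ with $G_y\supseteq\langle g\rangle$. So first pull $s$ back to a smooth model of $T/\langle g\rangle$; holomorphic symmetric differentials stay holomorphic under generically finite holomorphic maps. Then compute there.
\item The equality $\pi_1(X)=\pi_1(T/G)$ does not follow from rationality of the singularities. It holds because resolutions of quotient, or more generally klt, singularities have simply connected fibres (Koll\'ar, Takayama). The paper takes this for granted anyway.
\item If $\Bbb Z_p$ means $\Bbb Z/p\Bbb Z$, as the paper's exponent bound indicates, ``a $p$-group with at most $2n$ generators'' is not enough. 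You need $p\Lambda\subset(1-g)\Lambda$. This follows from $\Phi_m(1)-\Phi_m(g)\in(1-g)\Bbb Z[g]$ together with $\Phi_m(g)=0$, which is just your remark $p\in(1-\zeta)$. It gives exponent exactly $p$, not merely a power of $p$.
\end{enumerate}

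Finally, your Armstrong set-up proves more than the lemma states. Since $1-g$ is invertible on $\Bbb C^n$, every lift $\phi_\lambda(x)=gx+\lambda$ of $g$ has a fixed point, so $\phi_\lambda\in N$. Then $\tilde g\circ\phi_\lambda^{-1}$ is the translation by $-\lambda$. Hence $\Lambda\subset N=\Gamma$, and $\pi_1(X)=\pi_1(T/G)=\{1\}$ whenever $G=\langle g\rangle$. This needs neither the cyclotomic computation nor the primitivity of the eigenvalues, which you, like the paper, take from the t-simplicity remark. It is the general form of the paper's footnote argument for $T/\pm 1$.
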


\begin{proof} Claim 1 follows from the proof of Lemma 5.18  of \cite{Ca20} (first part $D=0$. Although the result is stated only for $T$ projective, the proof applies to a complex torus).

Claim 2 follows from Lemma 5.19 of loc. cit. (which applies to the non-projective case as well). 

In loc.cit. one proves that $\pi_*:\pi_1(T)\to \pi_1(T/G)$ is surjective with finite image. Thus $\pi_1(T/G)$ is a finite abelian group generated by $2n$ elements. We have $\pi_*\circ g_*=\pi_*$ for the action on the loops on $T$ based at a fixpoint of $g$. Since $g$ has isolated fixpoints, its eigenvalues are primitive $m$-th roots of the unity, and $\Phi_m(g)=0$ if $\Phi_m(z)=\sum_{k=0}^{k=\varphi(m)}a_k.z^k$ is the $m$-th cyclotomic polynomial. We then have: $0=\pi_*\circ (\sum_ka_k.(g^k)_*)=\sum_ka_k.\pi_*\circ (g^k)_*=\sum a_k.\pi_*=\pi_*.\Phi_m(1)$. The exponent of $\pi_1(T/G)$ thus divides $\Phi_m(1)$ which is $p$ if $m$ is a power of $p$, and is $1$ otherwise. This proves the claim.\end{proof}

The following result applies in particular to any $X$ simple Kummer:

\begin{corollary} 1. If $T$ is t-simple with $h^0(X,\Omega^2_X)\neq 0$, then $\kappa(X)=0$. 

In particular, $\kappa(X)=0$ if $T$ is simple.

If $X=T/G$ is not an \'etale quotient of $T$. Then:

2. $h^0(X,Sym^k(\Omega^1_X))=0,\forall k>0$, and:

3. If $G$ is cyclic of order $m$, $\pi_1(X)$ is finite abelian, quotient of  $\Bbb Z_p^{\oplus 2n}$ if $m$ is a power of a prime number $p$, and simply connected otherwise. 
\end{corollary}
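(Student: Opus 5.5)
The plan is to read everything off Lemma \ref{can} and Lemma \ref{kappa_1}, after first putting the action of $G$ in the form those lemmas need.

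\textbf{Claim 1.} Assume $T$ is t-simple. Then the fixed points of any $1\neq g\in G$ are isolated, as recalled above from \cite{BL'}, \S 13.2. Two cases arise.

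If $T$ is not an Abelian variety, Lemma \ref{can} applies directly. It gives that $T/G$ has canonical singularities, so $\kappa(X)=\kappa(T/G)=0$. Here $\kappa(T/G)=0$ because $K_{T/G}$ is torsion: it pulls back to $K_T$ off a finite set, and $K_T$ is trivial.

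If $T$ is an Abelian variety, the hypothesis $h^0(X,\Omega^2_X)\neq 0$ takes over. By Freitag, $H^0(T,\Omega^2_T)^G\neq 0$. Running the eigenvalue argument of Lemma \ref{can} with a nonzero invariant $2$-form $s$, one gets $a_j+a_k=m$ whenever $c_{jk}\neq 0$. I then want to deduce $age(g)\geq m$ at every fixed point. The intended route is to use t-simplicity again: all eigenvalues of $g$ are primitive $m$-th roots of unity, and $g$ acts on $H^0(\Omega^1_T)$ as a Galois-stable set of such eigenvalues. From this the pairing $a\leftrightarrow m-a$ should be forced on the whole spectrum, which gives $age(g)=nm/2\geq m$. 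This conjugate-pairing step is where I expect the real work, since one must show the invariant $2$-form actually pairs all eigen-directions. I would try first to show that the kernel of $s$ is a $g$-stable sub-Hodge structure, hence a subtorus, which t-simplicity forbids.

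For the phrase ``in particular'': if $T$ is simple then $a(T)=0$, so $T$ is not Abelian. Then $X$ is not projective, and Kodaira's criterion gives $h^0(X,\Omega^2_X)\neq 0$, so the first part applies.

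\textbf{Claims 2 and 3.} Suppose $X=T/G$ is not an \'etale quotient. Then some $1\neq g\in G$ has a fixed point, and it is isolated by t-simplicity.

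For Claim 2, I would apply Lemma \ref{kappa_1}.1 to the cyclic subgroup $\langle g\rangle$. This gives $h^0(T/\langle g\rangle, Sym^k\Omega^1)=0$. Since $T/\langle g\rangle\to T/G$ is a finite quotient, $G$-invariant symmetric differentials on $T$ are in particular $\langle g\rangle$-invariant, which yields vanishing on $X$. This uses the reflexive/Freitag-type extension statement for symmetric differentials.

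For Claim 3, $G$ is cyclic, generated by some $g$ of order $m$. I would arrange that this generator has a fixed point; if instead only a power $g^r$ has one, I would run the cyclotomic argument with $g^r$. Lemma \ref{kappa_1}.2 then gives the stated description of $\pi_1(X)$.
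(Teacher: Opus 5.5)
\textbf{Gap in Claim 1 (Abelian case).} Here your argument is incomplete, and the route you propose to finish it aims at a false statement.

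\emph{Full pairing fails.} A pairing $a\leftrightarrow m-a$ of the whole spectrum of $g$ (giving $age(g)=nm/2$) is impossible when $n$ is odd and $m>2$. All eigenvalues are primitive $m$-th roots of unity, and then none of them is its own inverse. For instance, take an order-$3$ automorphism of a t-simple abelian threefold with eigenvalues $(\omega,\omega,\omega^2)$, as on the Jacobian of a general Picard curve. It has the invariant $2$-forms $dz_1\wedge dz_3$ and $dz_2\wedge dz_3$, yet $age(g)=4$.

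\emph{The kernel idea fails too.} The kernel of a holomorphic $2$-form is in general not a rational sub-Hodge structure. On an odd-dimensional torus every $2$-form is degenerate, so t-simplicity cannot force $s$ to be nondegenerate.

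\emph{The missing step is elementary and already in your hands.} One pair with $c_{jk}\neq 0$ gives $a_j+a_k=m$. Since every $a_l\geq 1$ (the fixed point is isolated), $age(g)=\sum_l a_l\geq a_j+a_k=m$, strictly if $n\geq 3$. This is exactly the computation in the proof of Lemma \ref{can}. There the non-Abelian hypothesis serves only to produce a nonzero $G$-invariant $2$-form via Kodaira. The hypothesis $h^0(X,\Omega^2_X)\neq 0$, together with Freitag's $H^0(X,\Omega^2_X)=H^0(T,\Omega^2_T)^G$, supplies that form directly. So the argument of Lemma \ref{can} applies verbatim to every t-simple $T$, with no Abelian/non-Abelian case split. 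That is how the paper obtains Claim 1.

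\textbf{Claims 2 and 3.} As in the paper, these are read off Lemma \ref{kappa_1}. Your reduction of Claim 2 to $\langle g\rangle$ is sound: holomorphic symmetric differentials pull back injectively under the generically finite dominant meromorphic map from a smooth model of $T/\langle g\rangle$ to $X$.

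In Claim 3, however, ``arrange that the generator has a fixed point'' needs an argument. Your fallback of running the cyclotomic argument with $g^r$ is not legitimate. Lemma \ref{kappa_1}.2 uses the surjectivity of $\pi_1(T)\to\pi_1(T/G)$, which fails if the elements of $G$ with fixed points do not generate $G$.

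The bad case never occurs, and this is the argument you should give. Write $g(x)=\alpha(x)+c$. If some $g^r\neq 1$ has a fixed point, it is not a translation, so $\alpha^r\neq 1$ and hence $\alpha\neq 1$. By t-simplicity the identity component of $\ker(1-\alpha)$, a subtorus, is $0$. So $1-\alpha$ is an isogeny, hence surjective, and the generator $g$ itself has a fixed point, which is isolated.
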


\begin{example} 1. $(T/\pm 1)$, $n\geq 2$. Then $\kappa=0, p_g=1$ if $n$ is even, $p_g=0$ is $n$ is odd; $h^0(X,\Omega^p_X)=(^n_p)$ if $p$ is even, zero otherwise, and $\kappa_1=-\infty$, $\pi_1(X)=\{1\}$\footnote{This does not follow from the preceding corollary, but is easily seen from the fact that if $\gamma$ is the linear path joining two $2$-torsion points of $T$, its projection on $(T/\pm 1)$ is a loop equal to itself by $-1$.}. This is valid without the t-simpleness assumption. Except for $\kappa_1$, this is observed in \cite{U}, 16.11.

2. $T$ simple. What are the possible $(T,G)$ for each given $n$?
\end{example}

\begin{proposition}\label{k vs hk} Let $X_n=T/G$ be a Kummer variety with $T$ simple and $n=2m\geq 4$. Assume that $G$ is Abelian. If $h^{2,0}(X)=1$, a generator $s$ of $H^{2,0}(X)$ is of rank $2$ (i.e: $s^{\wedge 2}=0$).

In particular, $X$ is not bimeromorphically symplectic irreducible. 
\end{proposition}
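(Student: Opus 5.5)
The plan is to reduce the statement to linear algebra on the tangent space $V:=T_0T$ of the torus, using Freitag's result quoted at the start of \S\ref{s k}: $H^{p,0}(X)=H^{p,0}(T)^G$. Every holomorphic automorphism of $T$ is affine, $g(z)=\lambda(g)z+b_g$. Translations act trivially on the (translation-invariant) holomorphic forms of $T$. Hence $G$ acts on $H^{2,0}(T)=\Lambda^2V^*$ only through the linear-part homomorphism $\lambda:G\to GL(V)$. Since $G$ is abelian and finite, $\lambda(G)$ is a finite abelian group of commuting diagonalizable operators. We therefore choose linear coordinates $z_1,\dots,z_n$ on $V$ in which all $\lambda(g)$ are simultaneously diagonal, and decompose
$$V^*=\bigoplus_{\chi}V^*_\chi,\qquad d_\chi:=\dim V^*_\chi ,$$
over characters $\chi$ of $G$, each $dz_j$ lying in one eigenspace.

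Next we compute the invariant $2$-forms. A monomial $dz_j\wedge dz_k$ with $dz_j\in V^*_\chi$ and $dz_k\in V^*_{\chi'}$ transforms by $\chi\chi'$. So it is $G$-invariant iff $\chi'=\chi^{-1}$, and since the monomials form an eigenbasis, $(\Lambda^2V^*)^G$ is spanned by invariant monomials. This gives
$$(\Lambda^2V^*)^G=\bigoplus_{\chi^2=1}\Lambda^2V^*_\chi\ \oplus\bigoplus_{\{\chi,\chi^{-1}\},\ \chi^2\neq 1}V^*_\chi\otimes V^*_{\chi^{-1}},$$
hence
$$h^{2,0}(X)=\sum_{\chi^2=1}\binom{d_\chi}{2}+\sum_{\{\chi,\chi^{-1}\},\,\chi^2\neq1}d_\chi d_{\chi^{-1}}.$$

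The hypothesis $h^{2,0}(X)=1$ forces exactly one summand to equal $1$ and all others to vanish. Either there is a unique $\chi$ with $\chi^2=1$ and $d_\chi=2$, or a unique pair $\chi\neq\chi^{-1}$ with $d_\chi=d_{\chi^{-1}}=1$. In both cases the invariant space is spanned by a single monomial $s=dz_a\wedge dz_b$. Its square is $s\wedge s=0$, so $s$ has rank $2$, which proves the first claim.

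For the consequence, a bimeromorphically symplectic structure on $X$ with $n=2m\geq4$ would require a holomorphic $2$-form $\sigma$ with $\sigma^{\wedge m}\neq0$. Since $H^{2,0}(X)=\mathbb{C}s$, any such $\sigma$ is a multiple of $s$, and then $\sigma^{\wedge m}=0$ because $m\geq2$. The same holds on every smooth model, since $H^{2,0}$ is a bimeromorphic invariant.

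The only delicate point is the first step: justifying that the $G$-action on $H^{2,0}(T)$ factors through an abelian group of simultaneously diagonalizable linear maps. This follows from affineness of automorphisms of tori and finiteness of $G$. Note that $\lambda$ need not be injective, but this is harmless, since only the image $\lambda(G)$ enters. Simplicity of $T$ is not really used, beyond ensuring that $h^{2,0}(X)>0$, as in Lemma \ref{can}.
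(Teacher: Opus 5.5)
Your proof is correct and is essentially the paper's argument: you simultaneously diagonalize the abelian action and use Freitag's identification $H^{2,0}(X)=H^{2,0}(T)^G$. You then observe that a one-dimensional space of invariants must be spanned by a single monomial $dz_a\wedge dz_b$, whose square vanishes; along the way you make explicit what the paper leaves implicit, namely that automorphisms are affine with translations acting trivially on forms, and the eigenspace count of $h^{2,0}$.
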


\begin{proof} Let $z_i, i=1,\dots, 2m$ be linear coordinates in which the action of $G$ is diagonalised. Any $G$-invariant $2$-form $u$ on $T$ is written as: $u=\sum_{j<k} c_{jk} dz_j\wedge dz_k$, and $g^*(u)=\sum_{jk} c_{jk}.\chi(g).dz_j\wedge dz_k$, for the character associated to $g$ by the representation of $G$ on the tangent bundle of $T$. The vector space $H^{2,0}(X)$ is generated by the $G$-invariant $2$-forms (\cite{Fr}). Since $h^{2,0}(X)=1$, there is a single pair $\{j, k\}$ such that $c_{jk}\neq 0$ and $\chi(g)=1, \forall g\in G$. Its rank is of course $2$. \end{proof}

\begin{question}\label{q k} Does there exist simple Kummer manifolds $T/G$ of even dimension $n=2m\geq 4$ which are irreducible  bimeromorphically symplectic, i.e: such that $H^{2,0}(T)^G=H^{2,0}(T/G)=\Bbb C$, generated by a $2$-form $s$ such that $s^{\wedge m}\neq 0$? The preceding proposition shows that this is not possible if $G$ is abelian. One may ask a weaker version: does there exists  simple $T/G$ such that $H^{p,0}(T)^G=0$ for $p$ odd, and is equal to $\Bbb C$, generated by $s^{p/2}$ for $p$ even. The Kummer quotients $T/G$ with $G$ acting freely in codimension $2$-as in our `simple' situation-are characterised by the vanishing of $c_2$ (\cite{CGG}). The link with the distribution of the $h^{p,0}$ according to parity is however not immediate.
\end{question}





\section{Conditional description of $X$'s with $a(X)=0$.}\label{s a=0}

Let $f:X\to Y$ be a fibration, $X,Y$ smooth connected in C, $X_y$ the `general' fibre of $f$.

\subsection{Bimeromorphic variation of a fibration.}\label{s var}

Let $f:X\to Y$ be a fibration with $X,Y$ smooth, connected, compact, in the class C. If $b:B\to Y$ is a proper connected surjective, we denote with $f_B:(X\times_Y B)\to B$ the fibration deduced from $f$ by the base change $b$ on its main component.

\begin{definition} 1. We say that $Var(f)=0$ if there exists $b:B\to Y$ proper connected and $F \in C$ such that $X_B$ is bimeromorphic to $F\times B$ over $B$. 

2. $Var(f)$ is said to factorise through the algebraic reduction $g:Y \to Z$ of $Y$ if $Var(f_z)=0$, where $f_z:X_z\to Y_z$ is the restriction of $f$ to the general fibre of $g\circ f:X\to Z$. 

\end{definition} 

Such notions have been defined initially by Viehweg, Kawamata, Koll\'ar in the study of conjecture $C_{n,m}^+$ for projective manifolds, and motivated the construction of Viehweg-Zuo sheaves. 

We consider here K\"ahler non-projective situations in which $a(Y)=0$ might imply that $Var(f)=0$ under suitable conditions on the general fibre. It may be true that this always holds if the fibres of $f$ are not uniruled.  See Remark \ref{r var} below.

\begin{remark} Using standard arguments and the compactness of the components of $BC(X/Y)$, one shows that $Var(f)=0$ if and only if the general fibres of $f$ are pairwise bimeromorphic, moreover $b:B\to Y$ can be chosen to be generically finite if $Aut_0(X_y)=\{1\}$. 
\end{remark}

\begin{example}\label{ex var=0} 1. If the smooth fibres $X_y$ of $f$ have $h^{2,0}(X_y)=1$, and if one at least is projective, then $Var(f)=0$ if $a(Y)=0$ (\cite{CS}).

2.If the smooth fibres $X_y$ are hyperk\"ahler, and if one at least is not projective, then $Var(f)=0$ (no condition on $Y$) (\cite{Ca05}). 

3. If $a(f)=q(f)=0$ and if the general fibre of $f$ is Kummer, then $Var(f)=0$ if $a(Y)=0$. More generally, $Var(f)$ factorises through the algebraic reduction of $Y$. (\cite{F83}, Proposition 8.6). 
\end{example}

\subsection{Conditional description of $a(X)=0$.}\label{a=0 I}

We shall prove Theorem \ref{t' a=0} in two steps: first describing $X$ in terms of `simple' manifolds, using conjecture \ref{c simple}.2, recalled below, and then identify the simple manifolds using conjecture \ref{c simple}.1.

\begin{conjecture} Let $f:X\to Y$ be a fibration, $X\in C$. If $X_y$ is simple, and if $a(Y)=0$, $Var(f)=0$. 
\end{conjecture}

\begin{remark}\label{r var} A more general conjecture should be true: if $X_y$ is not uniruled, and if $a(Y)=0$, then $Var(f)=0$.

Notice that if $X_y$ is uniruled, this may fail. For example (\cite{Ca90}, 6.1) if $T$ is a torus with $a(T)=0$, $T^*=Pic^0(T)$ its dual, $L\to T\times T^*=P$ the Poincar\'e line bundle, $p:X:=\Bbb P(L\oplus \mathcal{O}_P)\to P$ the projection, and finally $f=\pi_1\circ p:X=\Bbb P(L\oplus \mathcal{O}_P)\to Y=T$, with $\pi_1:T\times T^*=P\to T$ the projection on the first factor. 
\end{remark}

\begin{lemma}\label{l var ss} If $X_y$ is semi-simple, and if $a(Y)=0$, $Var(f)=0$, assuming Conjecture \ref{c simple}.2. 
\end{lemma}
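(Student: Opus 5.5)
The plan is to reduce the semi-simple case to the simple case through the product structure given by Lemma \ref{l Zreg}.2, and then to apply Conjecture \ref{c simple}.2 to each factor separately.

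\textbf{Step 1: product decomposition.} By Lemma \ref{l Zreg}.1 the general fibre $X_y$ is semi-simple. Lemma \ref{l Zreg}.2 then gives a generically finite $u:X'\to X$ with Stein factorisation $f':X'\to Y'$, $v:Y'\to Y$, such that $X'$ is bimeromorphic to $T_1\times_{Y'}\dots\times_{Y'}T_r$, where each $\tau_i:T_i\to Y'$ has simple general fibres.

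\textbf{Step 2: control of the base.} Since $v:Y'\to Y$ is generically finite and surjective, $a(Y')=a(Y)=0$. Indeed, the field $M(Y')$ is a finite extension of $M(Y)$, so the two fields have the same transcendence degree. Each $T_i$ is a component of a relative Barlet-Chow space of a space in C. Its components are compact, so $T_i$ lies in C.

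\textbf{Step 3: the simple factors.} Apply Conjecture \ref{c simple}.2 to each $\tau_i:T_i\to Y'$, which is legitimate since its general fibre is simple and $a(Y')=0$. We obtain $Var(\tau_i)=0$: there are proper connected $b_i:B_i\to Y'$ and $F_i\in C$ with $(T_i)_{B_i}$ bimeromorphic to $F_i\times B_i$ over $B_i$.

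\textbf{Step 4: assembling.} Take $B$ to be a component of $B_1\times_{Y'}\dots\times_{Y'}B_r$ dominating $Y'$, with composite map $b:B\to Y'\to Y$. Then $X'_B$ is bimeromorphic over $B$ to $F_1\times\dots\times F_r\times B$. Hence all general fibres $X'_{y'}$ are pairwise bimeromorphic, isomorphic to $F:=\prod_i F_i$.

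\textbf{Step 5: descent from $X'$ to $X$.} For general $y$, the fibre $X_y$ is dominated by $X'_{y'}\cong F$ through a generically finite map $u_{y'}$ of bounded degree. I would then use the remark after the definition of $Var$: $Var(f)=0$ holds as soon as the general fibres are pairwise bimeromorphic. So the task is to show that the $X_y$ are bimeromorphic to one another. The covers $X'_{y'}\to X_y$ are finite quotients in a suitable sense: they are given by the map of Lemma \ref{l Zreg}.2 from the incidence graph, as a component of the inverse image under $T^{\delta}\to Sym^{\delta}$. So each $X_y$ is bimeromorphic to $F/H_y$ for a finite subgroup $H_y$ of the bimeromorphic automorphisms of $F$, coming from the monodromy of the symmetric-product cover. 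These subgroups vary in a family over $B$. Because $a(F)=0$ and $F$ is semi-simple, the group $\mathrm{Bim}(F)$ is discrete modulo $Aut^0(F)$. Connectedness of $B$ then forces all the $H_y$ to be conjugate, so the quotients are all bimeromorphic.

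I expect Step 5 to be the main obstacle. Making the quotient description rigorous and proving rigidity of the finite subgroups in the family takes real work, although Steps 1–4 are formal given Lemma \ref{l Zreg}. As an alternative to Step 5, I would try choosing $B$ generically finite over $Y$ and forming the fibre product of $X_B$ with $X'_B$. This directly exhibits $X_B$ as a quotient of the trivial family $F\times B$ by a finite group acting fibrewise, and that group is then locally constant.
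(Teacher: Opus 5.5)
Your Steps 1--4 are exactly the paper's proof. That proof consists only of Lemma \ref{l Zreg}.2 combined with the additivity $Var(f_1\times_Y\dots\times_Y f_r)=\sum_i Var(f_i)$, with Conjecture \ref{c simple}.2 applied to each simple factor over $Y'$ (where $a(Y')=0$).

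The paper says nothing about your Step 5, the descent from $X'$ back to $X$, so there you are adding care that the paper omits. Your outline for it looks completable along the following lines:
\begin{enumerate}
\item $X'$ is a component of the pull-back of the Galois cover $\prod_j T_j^{\delta_j}\to\prod_j Sym^{\delta_j}(T_j)$ (fibre products over $Y$). Hence $X_y$ is bimeromorphic to $X'_{y'}/K$ for a fixed finite group $K$.
\item The rigidity you need should come from $F=\prod_i F_i$ being non-uniruled. By a Matsumura--Fujiki type result, which you would have to cite, connected families of bimeromorphic self-maps of $F$ are then translates by a complex torus $A$.
\item The finiteness of $H^1(K,A)$ then makes a connected family of such $K$-actions conjugate to a fixed one, so the quotients $X_y$ are pairwise bimeromorphic.
\end{enumerate}
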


\begin{proof} Since: $Var(f_1\times_Y \dots _Y\times f_r)=Var(f_1)+\dots+ Var(f_r)$ if $f_i:X\to Y,i=1,\dots,r$, one deduces the claim from Lemma \ref{l Zreg}. 2. \end{proof}

We recall Theorem \ref{t' a=0}:

\begin{theorem}\label{t a=0} Assume Conjecture \ref{c simple}.2. Let $X\in C$ with $a(X)=0$. Let $s:X\to \Sigma$ be the MRC of $X$. Then $s$ has unirational prehomogeneous fibres and $\Sigma$ is isogeneous to a product $S\times K$, where $K$ is Kummer and $S$ semi-simple without Kummer factors. If one also assumes Conjecture \ref{c simple}.1, $S$ is a product of bimeromorphically symplectic irreducible manifolds.
\end{theorem}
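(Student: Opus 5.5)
The plan is to apply the structure results of \S\ref{s ss} with $Y$ a point. Since $a(X)=0$, Theorem \ref{red k} gives the Kummer reduction $k_X:X\to K_X$, with $K_X$ Kummer. Let $a:A\to K_X$ be the (relative) algebraic reduction of $k_X$, and $a':X\to A$ the induced map. By Proposition \ref{p ks} and Corollary \ref{c ks}.1, the fibres of $A\to K_X$ are unirational prehomogeneous. Moreover $a(a')=k(a')=0$, so the semi-simple reduction $s_{a'}:S_A\to A$ of $a'$ (Theorem \ref{red ss}, relative version) has general fibres semi-simple without Kummer factor.

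\textbf{Step 1: the fibres of $A\to K_X$ are trivialised.} Because $a(X)=0$, we have $a(K_X)=0$. The fibres of $A\to K_X$ are Moishezon prehomogeneous, hence rationally connected. I would use this to identify $X\to S_A\to A$ with the factorisation needed for the MRC.

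\textbf{Step 2: the variation of $S_A\to K_X$ vanishes.} Consider $\Sigma$ defined as the semi-simple reduction of the fibration $X\to K_X$ composed with the algebraic reduction. Its general fibre over $K_X$ is semi-simple, isogeneous to a product of simple non-Kummer manifolds. By Lemma \ref{l var ss}, which assumes Conjecture \ref{c simple}.2 and uses $a(K_X)=0$, this fibration has $Var=0$. So after a finite base change $B\to K_X$, the space is bimeromorphic to $S\times B$ with $S$ semi-simple. By Theorem \ref{red k}.3 and Corollary \ref{c ks}.2, a finite cover of $K_X$ is still Kummer, isogeneous to a torus product $K$. Hence $\Sigma$ is isogeneous to $S\times K$. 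Taking Galois closures of the isogeny writes $\Sigma=(S\times K)/G$ with $G$ finite.

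\textbf{Step 3: $X\to\Sigma$ is the MRC.} The fibres of $X\to\Sigma$ are fibres of $A\to K_X$, after passing through the semi-simple factor. I would check that $X\to \Sigma$ has unirational prehomogeneous fibres. Here the ordering matters: one must show the rationally connected part sits over $S\times K$. I would apply the factorisation in Corollary \ref{c ks} to the composite, arguing that the algebraic reduction of $X$ relative to $\Sigma$ has Moishezon prehomogeneous fibres, which are therefore rationally connected. Conversely, $\Sigma$ is not uniruled: $S\times K$ is a product of simple manifolds and tori of algebraic dimension zero, which carry no rational curves. By Graber--Harris--Starr in the Kähler setting, $X\to\Sigma$ is then the MRC.

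\textbf{Step 4: the simple factors.} Under Conjecture \ref{c simple}.1, each simple non-Kummer factor of $S$ is bimeromorphically irreducible symplectic, which gives the last claim.

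\textbf{Main obstacle.} I expect the hardest part to be Step 3 together with the interleaving in Step 2. The difficulty is showing that the order "Kummer, then algebraic, then semi-simple" can be rearranged so that the rationally connected fibres lie on top of a base that splits globally as $(S\times K)/G$. I would first try to prove that the semi-simple part $S_A\to A$ descends to $K_X$. The idea is to use $Var=0$ over $A$, together with the fact that $A\to K_X$ has rationally connected fibres, so that the general fibres over a fixed point of $K_X$ are constant along those fibres and $S_A$ is bimeromorphic to $A\times_{K_X}S_K$.
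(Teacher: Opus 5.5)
There is a genuine gap. Your skeleton (Kummer reduction, relative algebraic reduction $A\to K_X$, relative semi-simple reduction $S_A\to A$, then Conjecture \ref{c simple}.2 through Lemma \ref{l var ss}) is the right one. But you never prove that $X\to S_A$ has Moishezon fibres with $q=0$.

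Step 3 takes this for granted when you say that the fibres of $X\to\Sigma$ are ``fibres of $A\to K_X$, after passing through the semi-simple factor''. Passing to the algebraic reduction of $X$ relative to $\Sigma$ does not help unless you show that this relative algebraic reduction is $X$ itself. A priori the fibres of $X\to S_A$ need not be Moishezon: above their relative algebraic reduction there may remain positive-dimensional pieces of algebraic dimension zero. Their semi-simple quotients may vary, so they need not assemble into a larger semi-simple quotient of the fibres $X_a$ of $a'$, and the maximality of $S_A\to A$ alone does not exclude them.

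Ruling this out is the core of the paper's proof, and it needs a \emph{second} use of Conjecture \ref{c simple}.2:
\begin{enumerate}
\item Let $X\to B\to S_A$ be the algebraic reduction of $X\to S_A$. Proposition \ref{p ks}, applied to $X\to B\to S_A\to A$, gives $q(B/S_A)=0$ and $a(X/B)=k(X/B)=0$. Hence $X\to B$ has a relative semi-simple reduction $X\to\Sigma'\to B$ without Kummer factor.
\item Since $a(B)=0$, Lemma \ref{l var ss} together with Corollary \ref{c ks}.2 lets you assume $\Sigma'=B\times\Phi$.
\item The induced map $X\to S_A\times\Phi\to A$ then has semi-simple fibres without Kummer factor of dimension $\dim(S_A/A)+\dim\Phi$. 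This contradicts the maximality of $S_A\to A$ unless $\dim\Phi=0$, so $\Sigma'=B$.
\item The last claim of Theorem \ref{red ss} says a positive-dimensional manifold of algebraic dimension zero has a positive-dimensional semi-simple reduction. It therefore forces the fibres of $X\to B$ to be points, i.e. $X=B$.
\end{enumerate}

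Step 2 also applies Lemma \ref{l var ss} over the wrong base. If $A\neq K_X$, the general fibre of $S_A\to K_X$ maps onto a positive-dimensional Moishezon fibre of $A\to K_X$, so it is not semi-simple. The lemma should be applied to $S_A\to A$, which is legitimate because $a(A)=0$. After a finite base change this gives $S_A=A\times F$ with $F$ semi-simple without Kummer factor, so the descent to $A\times_{K_X}S_K$ in your ``main obstacle'' is unnecessary.

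The composite $X\to A\times F\to K_X\times F$ is then a composite of two fibrations with Moishezon fibres and $q=0$: the first by the argument above, the second by Proposition \ref{p ks}. Its fibres are therefore Moishezon with $q=0$ over a base of algebraic dimension zero, hence unirational prehomogeneous by \cite{F83}, 2.5. Since $K_X\times F$ is not uniruled, this map is the MRC; this direction needs no Graber--Harris--Starr, because MRC fibres contain these fibres and project to rationally chain connected subvarieties of a non-uniruled base. With this in place your Step 4 goes through.
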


The relative version thus holds true (by a standard argument involving the compactness of Barlet-Chow scheme):

\begin{corollary} \label{c a=0} Let $f:X\to Y$ be a fibration in C with $a(X_y)=0$. Let $s_f:X\to \Sigma, s'_f:\Sigma \to Y, f=s'_f\circ s_f$ be the relative MRC of $f$. The smooth fibres of $s_f$ are rational prehomogeneous, and the general fibre $\Sigma_y$ of $s'_f$ is isogeneous to a product  $S_y\times K_y$, where $K_y$ is Kummer and $S_y$ semi-simple without Kummer factor.  
\end{corollary}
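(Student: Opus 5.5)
The plan is to deduce Corollary \ref{c a=0} from Theorem \ref{t a=0}, applied to the general fibre $X_y$, and then to glue the fibrewise constructions into relative ones using the compactness of the components of $BC(X/Y)$.

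\emph{Step 1: the relative MRC.} Since $a(X_y)=0$ for general $y$, Theorem \ref{t a=0} applies to $X_y$. It says that the MRC $s_y:X_y\to\Sigma_y$ has unirational prehomogeneous fibres, and that $\Sigma_y$ is isogeneous to $S_y\times K_y$ with $K_y$ Kummer and $S_y$ semi-simple without Kummer factor. The relative MRC $s_f:X\to\Sigma$ over $Y$ exists in the class C. It is obtained from the component of $BC(X/Y)$ parametrising the rational-chain-connected classes; compactness of this component gives a meromorphic quotient over $Y$. Its restriction to the general $X_y$ is the MRC of $X_y$. This gives the first assertion on general fibres. For the smooth fibres of $s_f$ one then uses that prehomogeneity under a connected algebraic group is an open, and here Zariski-generic, condition. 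The cleanest route is to note that the fibres of $A\to K$ in Corollary \ref{c ks} are prehomogeneous, apply this fibrewise, and transport it through the relative structure.

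\emph{Step 2: relative Kummer and semi-simple reductions.} Take $k_{s'}:\Sigma\to K_f\to Y$, the relative Kummer reduction of $s'_f:\Sigma\to Y$ given by Theorem \ref{red k}.2; this is legitimate because $a(\Sigma_y)=0$. Take also the relative semi-simple reduction $\sigma_{f}$ given by Theorem \ref{red ss}, relative version. By Lemma \ref{l Zreg}, semi-simplicity of fibres is Zariski-open, so these constructions restrict on general $\Sigma_y$ to the absolute Kummer and semi-simple reductions. The general fibre $K_y$ of $K_f\to Y$ is therefore the Kummer part, and the isogeny $\Sigma_y\sim S_y\times K_y$ is exactly the one given by the absolute theorem applied to $X_y$. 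The point to check is that the fibre of the relative reduction over a general $y$ is the fibrewise reduction, not something smaller. This follows from the universal properties in Theorems \ref{red ss} and \ref{red k}, since the reductions of the general fibres vary in a family parametrised by a compact component of $BC$.

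\emph{Main obstacle.} The delicate point is that Theorem \ref{t a=0} needs Conjecture \ref{c simple}.2, used through Lemma \ref{l var ss}, for fibrations whose base has algebraic dimension zero. In the relative situation one only needs it on the general fibre $X_y$, which is itself in C with $a=0$, so nothing new is required. Still, I would make sure that the isogeny $\Sigma_y\to S_y\times K_y$ is realised uniformly. To do this I would use Lemma \ref{l Zreg}.2 to produce a generically finite base change $Y'\to Y$ over which $\Sigma$ becomes a fibre product of families with simple or Kummer general fibres. I would then check that the Kummer part and the non-Kummer part can be separated in this family, using Lemma \ref{l sss}.1 and the non-isogeny of Kummer and non-Kummer simple factors.
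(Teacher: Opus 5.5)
Your route is the paper's. There the corollary is justified in one sentence: Theorem \ref{t a=0} is applied to the general fibre $X_y$, which is in C with $a(X_y)=0$, so Conjecture \ref{c simple}.2 is used only as in the absolute case. A standard compactness argument for $BC(X/Y)$ then shows that $s_f$ restricts on the general $X_y$ to the MRC of $X_y$. Once that is granted, the isogeny assertion is immediate. It concerns only the general fibre $\Sigma_y$ and is literally the conclusion of Theorem \ref{t a=0} for $X_y$. Your Step 2 is therefore unnecessary, and so is the worry about realising the isogeny uniformly through a base change from Lemma \ref{l Zreg}.2 and Lemma \ref{l sss}.1.

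The step that does not work is the passage from general fibres to \emph{all} smooth fibres of $s_f$.

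\emph{Prehomogeneity is not an open condition in smooth families.} Blow up $\mathbb{P}^2$ at four points moving in a family. The member with the four points collinear is prehomogeneous: the $3$-dimensional group of projectivities fixing that line pointwise has an open orbit and lifts to the blow-up. The member with the points in general position is the quintic del Pezzo surface, whose automorphism group is finite.

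\emph{Openness would not help anyway.} It only spreads the property from a given fibre to its neighbours. Starting from general fibres, it says nothing about special smooth fibres; for those you would need a specialisation (closedness) statement, which you do not have. Your alternative, ``transporting'' the prehomogeneity of the fibres of $A\to K$ from Corollary \ref{c ks}, is too vague to fill this, and it also concerns only general fibres.

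So your argument proves that the \emph{general} fibre of $s_f$ is prehomogeneous, and this is also what Theorem \ref{t a=0} plus Barlet--Chow compactness gives in the paper. Note also that Theorem \ref{t a=0} yields ``unirational'', not ``rational''. For prehomogeneous manifolds the former does not imply the latter: smooth equivariant compactifications of $SL_N/H$, with $H$ finite having non-zero unramified Brauer group as in the examples of Saltman and Bogomolov, are prehomogeneous and unirational but not rational. You should therefore state the first assertion for general fibres and with ``unirational''.
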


\begin{proof} (of Theorem \ref{t a=0}) Let $k':X\to K, k:K\to Y, f=k'\circ k$ be the Kummer reduction of $X$. Let $a':X\to A, a:A\to K$ be the algebraic reduction of $k'=a\circ a'$. After Proposition \ref{p ks}, we have: $q(a)=0$ and $k(a')=a(a')=0$. We can thus construct the semi-simple reduction $s':X\to S,s:S\to A$ of $a'=s\circ s'$. We thus obtain the decomposition $X\to S\to A\to K$ of Corollary \ref{c ks} (with $Y$ a point). 

Let next $b':X\to B, b:B\to S$ be the algebraic reduction of $s':X\to S$. Applying Proposition \ref{p ks} to $X\to B\to S\to A$, we get that $q(B/S)=0$, and $a(X/B)=k(X/B)=0$. We can thus take the semi-simple reduction $\sigma':X\to \Sigma, \sigma:\Sigma \to B$ of $b'=\sigma\circ \sigma'$. Its general fibre $\Sigma_b, b\in B$ is semi-simple without Kummer factor. 

Claim 1: $\Sigma=B$. Apply Conjecture \ref{c var} to $\sigma:\Sigma\to B$. Replacing $B$ by a finite cover $B'$, we get the existence of a semi-simple manifold without Kummer factor $\Phi$ such that $\Sigma\times_B B'=\Phi\times  B'$. Applying Proposition \ref{p ks} to $g:X':=X\times_B B'\to X$, we may suppose that $X=X', B=B', \Sigma=B\times \Phi$. We thus get a map $\Sigma=B\times \Phi\to S\times \Phi$. The general fibres of $S\times \Phi\to A$ are thus semi-simple without Kummer factor (since so are the general fibres of $S/A$), and of dimension $k(X/A)+dim(\Phi)$ contradicting, if $dim(\Phi)>0$,  the fact that $S/A$ is the semi-simple reduction of $X/A)$. Thus $dim(\Phi)=0$ and $\Sigma=B$. 

Claim 2: $X=B$. Indeed, the semi-simple reduction of the general fibre $X_b$  of $X/B$ is $\Sigma_b$.  Since $\Sigma_b$ is a point, $X_b$ is also a point  (after Theorem \ref{red ss}).

We apply now Conjecture \ref{c var} and Proposition \ref{p ks} to $s:S\to A$, and may assume, additionally, that $S=A\times F$ for some semi-simple manifold $F$ without Kummer factor. 

We thus obtain $b:X=B\to S=A\times F$ which has Moishezon fibres with $q=0$, and a fibration $a\times 1_F:A\times F\to K\times F$ with Moishezon fibres with $q=0$. The composite $(a\times 1_F)\circ b:X\to K\times F$ of two fibrations with Moishezon fibres and $q=0$ still has Moishezon fibres with $q=0$, hence unirational prehomogeneous since $a(K\times F)=0$ (\cite{F83}, 2.5, or \cite{Ca85}, \S10, Th\'eor\`eme p. 406). This proves Theorems \ref{t a=0} (and \ref{t' a=0}). \end{proof}





\section{Iterated Moishezon manifolds.}\label{s iM}

We briefly explain the notion of relative algebraic reduction, and the construction of the fibration $f:X\to Y$ by iteration of relative algebraic reductions mentioned in the introduction.

$\bullet$ The relative algebraic reduction. One starts with a fibration $g:X\to Z$ with $X\in C$. The algebraic reduction $a'_g:X\to A_g,a_g:A_g\to Z$ of $g:X\to Z$ is a factorisation $a'_g\circ a_g=g$ which induces on the general fibre $X_z$ of $g$ the algebraic reduction $a'_{g\vert X_z}=a_{X_z}:X_z\to A_{g,X_z}=A_{X_z}$ of $X_z$.

$\bullet$ The relative Albanese map $alb'_g:X\to Alb_g, alb_g:Alb_g\to Z$ of $g=alb_g\circ alb'_g$ is similar, except that the map $alb'_g:X\to Alb_g$ is not necessarily surjective or with connected fibres. However $alb_g:Alb_g\to Z$ is over the smooth fibres $X_y$ of $g$ the actual Albanese torus $Alb_{X_y}$. When $a(X)=a(Z)$, the map $alb'_g:X\to Alg_g$ is surjective, with connected fibres (just as when $Z$ is a point). Moreover, there is no transversal subvariety $V\subsetneq Alb_g$ such that $alb_g(V)=Z$.

\medskip

The relative version of the algebraic reduction is constructed in \cite{Ca80} and in \cite{F81}, the relative Albanese map is constructed in\cite{Ca85} \footnote{In \cite{F81} the relative Albanese map is constructed only when the fibres of $g$ are Moishezon, following Grothendieck, by double dualisation of the relative Picard variety. The general construction in \cite{Ca85} is quite different, Grothendieck approach not being available.}. In both cases, the construction relies in an essential way on the compactness of the components of $BC(X/Z)$.

\medskip

$\bullet$ The fibration $f:X\to Y$ with $a(X_y)=0$ for $y\in Y$ general, and $Y$ without nontrivial subvariety $V\subset Y$ with $a(V)=0$ is constructed as follows by iterating algebraic reductions:

First take $a_X=a_{X,1}:X\to A_X=A_{X,1}$, the algebraic reduction of $X$. Then take the algebraic reduction $a'_{X,2}:X\to A_{X,2},a_{X,2}:A_{2,X}\to A_X$ of $a_X$ if $X\neq A_X$ (i.e: if $X$ is not Moishezon). If $X=A_{X,2}$ that is: if the fibres of $a_X$ are Moishezon, or if the general fibres of $a'_{X,2}$ have algebraic dimension zero\footnote{The algebraic dimension of the fibres of any fibration in C is analytically upper-semicontinuous in the following sense: for any $a\geq 0$, the set of $y\in Y$ such that each component of $X_y$ has algebraic dimension at least $a$ is a countable union of closed analytic subsets of $Y$.}, we are finished: $f=a'_{X,2}, Y=A_{X,2}$. Otherwise, we iterate these steps $r$ times until we get a relative algebraic reduction $f=a_{X,r}:X\to A_{X,r}=Y, a'_{X,r}:A_{X,r}\to A_{X,(r-1)}$ of $a_{X,(r-1)}:X\to A_{X,(r-1)}$ such that the general fibre of $a'_{X,r}$ has algebraic dimension zero (it may be  a point), and $Y=A_{X,r}$ has a fibration $a_{X,2}\circ \dots\circ a_{X,r}:Y\to A_X$, the fibres of each $a_{X,j}$ being Moishezon. We say $Y$ is `iterated Moishezon' of length $r$. The class of iterated varieties  manifolds in C of length $r$ is  denoted $M_r$ (so that $M_1$ is the class of Moishezon manifolds). The class of iterated Moishezon manifolds of length at most $r$ is stable by image, subvarieties, the components of $BC(Y)$ are in this class if $Y$ is (\cite{Ca80}). 

From the preceding construction follows that $X$ is not iterated Moishezon if and only if its general point is contained in a positive-dimensional subvariety of algebraic dimension zero.

$\bullet$ Thus $Y$ is inductively constructed from Moishezon Manifolds. But much more can be said according to \cite{F83} (and partly \cite{Ca85}). Let $a_Y:Y=A_{X,r}\to A_Y$ be the algebraic reduction of $Y$, $a_Y\circ f=a_X:X\to A_Y=A_X$ being thus the one of $X$.

1. The fibres of $a_Y$ are prehomogeneous\footnote{That is: the unit component of their group of automorphisms has a nonempty Zariski open orbit. See \cite{U}, \S.19.6. K\"ahler prehomogeneous manifolds are fibre bundles over their Albanese torus with a unirational prehomogeneous fibre (\cite{L}, theorem 3.22)} if they are Moishezon (\cite{Ca85}, \S 9). Thus $a_Y=\tau\circ u$, where $\tau: T\to A_Y$ is the Albanese reduction of $a_Y$ (\cite{Ca85}, \S 5), and $u:Y\to T$ is- a posteriori-the relative MRC of $a_Y$.

2. In the general case, the fibres of $a_Y$ are towers of prehomogeneous Moishezon manifolds. Inductively permuting their $u's$ and $\tau's$, it is proved\footnote{Independently of \cite{Ca85}.} in \cite{F83} that $a_Y=\tau\circ u$, where $u:Y\to T$ has unirational prehomogeneous fibres, and $\tau:T\to Z$ has smooth fibres which are tori.

\begin{question}\label{q preh} Are the smooth fibres of $g=\tau\circ u$ prehomogeneous?
\end{question}

In short: the single obstruction to algebraicity of length $r$ iterated Moishezon manifolds in C are non-projective tori which are length $(r-1)$ iterated extensions of abelian varieties. This obstruction thus follows from the failure of Poincar\'e reducibility for non projective tori.
\medskip





\section{The general case.}\label{s gc}

 Assuming conjecture \ref{c var} and applying Theorem \ref{t a=0}, we get the following canonical decomposition of any $X$ in C: let $a_X:X\to A_X$ be the algebraic reduction of $X$. Then $a_X=\mu\circ \nu$, where:

$\nu: X\to Y=A_{X,r}, \mu:Y:=A_{X,r}\to A_{X}$ is the iterated Moishezon reduction of $X$, with $a(X_y)=0$ and $Y\in M_r$.

Each of the maps $\mu,\nu$ has a $2$-step factorisation:

$\mu=\tau\circ \rho, \rho:Y\to T, \tau:T\to A_X$ is the Albanese reduction of $\mu$, the smooth fibres of $\tau$ being tori in $M_{(r-1)}$, length $(r-1)$ extensions of Abelian varieties, the fibres of $\rho$ are unirational prehomogeneous manifolds, so that $\rho$ is also the MRC of $a_X$. 

$\nu=\sigma\circ \rho', \rho':X\to \Sigma, \sigma:\Sigma \to Y$ is the relative MRC of $\nu$. The fibres of $\rho'$ are prehomogeneous manifolds. The general fibre $\Sigma_y$ of $\sigma$ is isogeneous to a product $S_y\times K_y$, where $S_y$ is  semi-simple without Kummer factor, and $K_y$ Kummer. If we moreover assume conjecture \ref{c simple}, then $S$ is isogeneous to a product of irreducible bimeromorphically symplectic manifolds.

\begin{remark} This $4$-step decomposition in fact applies in the obvious sense to any fibration $g:X\to Z$ with $X$ in C, just taking the relative versions of $\mu,\nu, \rho', \sigma, \rho, \tau$. 
\end{remark}





\section{Some questions.}\label{q}

Just recall: 

Question \ref{q k}.

Question \ref{q preh}

\begin{question} \label{q bim s} Recall that $X_n\in C$ is said to be irreducible bimeromorphically symplectic (resp. exactly irreducible bimeromorphically symplectic) if $n=2m$ is even, and if  $H^{2,0}(X)$ is generated by some $\sigma$ such that $\sigma^{\wedge m}\neq 0$ (resp. if $X$ is irreducible bimeromorphically symplectic, and if, moreover $h^{p,0}(X)$ is zero if $p$ is odd, and equal to $1$ if $p$ is even. 

Are the two properties equivalent? Under which additional conditions do they become equivalent? \end{question} 

\begin{question} \label{q max} Let $X\in C$ be simple (or simple and irreducible bimeromorphically symplectic, or even simple hyperk\"ahler). Is the number of its positive-dimensional maximal subvarieties finite? 
\end{question}





\end{document}